\documentclass[12pt]{article}

\usepackage{amssymb,amsmath,amstext,color}
\usepackage{graphicx,overpic,psfrag}
 
\usepackage{subfigure}
 
\usepackage[arrow, matrix, curve,cmtip]{xy}
 
\usepackage{MnSymbol}
  
\usepackage{tikz-cd}
\usepackage{algpseudocode}
\usepackage{algorithm,float}
\usepackage{mathrsfs}
\usepackage{hyperref}

\usepackage{adjustbox}
\usepackage{extpfeil}

\newtheorem{lemma}{Lemma}
\newtheorem{theorem}[lemma]{Theorem}
\newtheorem{corollary}[lemma]{Corollary}
\newtheorem{definition}[lemma]{Definition}

\newif\ifqedused
\newenvironment{proof}{\qedusedfalse\noindent{\em Proof:}}{\ifqedused\else\hfill $\Box$~\\\fi}
\newcommand{\qedhere}{\global\qedusedtrue\tag*{$\Box$}}

\usepackage{etoolbox}
\AtBeginEnvironment{proof}{\setlength{\parskip}{0pt}}

\usepackage{amsmath, amssymb}
\usepackage{tikz-cd}
\usepackage{graphicx}                   
\usetikzlibrary{decorations.pathreplacing, calc}

\newcommand{\upsubsetneq}{\rotatebox[origin=c]{45}{$\subsetneq$}}
\newcommand{\downsubsetneq}{\rotatebox[origin=c]{-45}{$\subsetneq$}}
\newcommand{\versubseteq}{\rotatebox[origin=c]{90}{$\subseteq$}}

\newtheorem{proposition}[lemma]{Proposition}
\newtheorem{remark}[lemma]{Remark}
\newtheorem{example}[lemma]{Example}

\newtheorem{notation}[lemma]{Notation}

\usepackage{enumitem}
\setlist{itemsep=0pt, topsep=2pt, parsep=1pt}

\def\be{\begin{eqnarray}}
\def\ee{\end{eqnarray}}
\def\bes{\begin{eqnarray*}}
\def\ees{\end{eqnarray*}}

\def\R{\mathbb R}       
\def\C{\mathbb C}      
\def\K{\mathbb K} 
\def\A{\mathbb A} 
\def\P{\mathbb P} 
\def\V{\mathbb V}  
\def\L{\mathbb L}  
\def\T{\mathbb T} 

\def\cP{\mathscr P}  
\def\cPU{\mathscr{P}_{\mathscr{U}}} 
\def\cPR{\mathscr{P}_{\mathscr{U},\mathbb{R}}} 
\def\cV{\mathscr{V}}
\def\cU{\mathscr{U}}
 \def\cT{\mathscr{V}_{\T}}
 
\def\F{\mathbb F}
\def\cA{\mathscr{A}_\cP}

\def\mT{\T_{\min}}

\def\cD{\mathscr{D}_\cP}

\newcommand{\Base}[1]{\mathscr{B}_{#1}}

\def\cR{\mathscr{R}}

\def\ot{\,\overline{t}\,}
\def\oT{\,\overline{T}\,}
\def\od{\,\overline{\delta}\,}
\def\oD{\,\overline{\Delta}\,}
\def\ox{\,\overline{x}\,}
\def\oX{\,\overline{X}\,}

\def\E{\mathcal{E}}

\usepackage{float}
\newfloat{diagram}{htbp}{lod}
\floatname{diagram}{Diagram}

\author{
Javier Sendra-Arranz\thanks{Department of Mathematics, CUNEF Universidad, Madrid, Spain.
Emails: \texttt{javier.sendraarranz@cunef.edu},
\texttt{jrafael.sendra@cunef.edu},
\texttt{juana.sendra@cunef.edu}}
\and
Juan Rafael Sendra\footnotemark[1]
\and
Juana Sendra\footnotemark[1]
}

\title{
On the Structure of Radical Towers and their Parametrizations}

\date{}  

\begin{document} 
\maketitle

\begin{abstract}
Radical parametrizations describe algebraic varieties by means of nested
radicals of polynomials, and their theory relies on three varieties
associated with a parametrization: the incidence, tower and
radical varieties. In this paper we revisit this framework from a
computational point of view and show that it is governed by the radical
tower rather than by the particular parametrization. 
We prove that the tower
variety depends only on the tower and that the incidence variety is
always irreducible, so that no prime decomposition is needed. We show how to replace a tower by an isomorphic one whose tower variety
is defined by the equations of the tower, so that it can be computed
without elimination. We introduce universal radical parametrizations
and we use them to settle, under a natural 
hypothesis on the coefficients, an open
case in the transformation of differential equations with radical
coefficients into equations with rational coefficients. We show that
radical subtowers form a lattice, which yields minimal radical
parametrizations, and we give algorithms to compute them. We also prove
that dominance between towers induces a unique dominant rational map
between their tower varieties, through which unirationality is
transferred.
 In addition, we study towers over subfields
of $\mathbb{C}$ and, in the real case, relate the real points of the
tower variety to the real domains of the associated parametrizations.
 Finally, we present computational experiments which show that our methods succeed in many cases
where the previous approach does not finish within a reasonable time.
\end{abstract}

\noindent\textbf{Keywords:} radical parametrization, radical tower,
tower variety, unirationality, minimal parametrization, real algebraic
geometry.

\noindent\textbf{AMS subject classification:} 14Q05, 14Q10, 14M20,
14P10, 68W30.

\section{Introduction}

Rational varieties, and more generally unirational varieties, can be represented
by means of parametrizations given by rational functions. Radical varieties can
be regarded as a generalization of unirational varieties, in the sense that they
admit parametrizations given by quotients of nested radicals of polynomials (see \cite{PirolaSchlesinger2005}  and \cite{Zariski1926}); in
particular, unirational (and hence rational) varieties are a special case of
radical varieties. From an algorithmic point of view,
algorithms for parametrizing algebraic curves of genus at most six by radicals
are presented in \cite{Harrison2013,SendraSevilla2011}, and
in \cite{SendraSevilla2013} it is shown how to parametrize certain families of
algebraic surfaces by radicals. In a second stage, the algebraic theoretical
framework for radical varieties was established in \cite{RSV2017}.   There, the main ingredients in
this algebraic perspective are three varieties associated to a radical tower $\T$ and a radical parametrization $\cP$: the incidence variety $\mathscr{A}_\cP$, the tower variety $\cV_\T$ and the radical variety $\cV_\cP$. 
The algebro-geometric study of these varieties plays a fundamental role in the theory and applications of radical parametrizations. 
The surjectivity of radical parametrizations of curves was analyzed in \cite{CaravantesSendraSevillaVillarino2024}.

The theory of radical parametrizations of algebraic varieties has found
applications in a wide range of contexts. In applied algebraic geometry, the
results of \cite{RSV2017} are extended in \cite{KKS2025} to radical
parametrizations restricted to irreducible varieties. On this basis, the authors
give a rigorous definition of the algebraic degree of optimization problems over
a variety, and in particular of the $p$-norm distance degree. In mathematical
physics, rationalizing the square roots that arise in the computation of Feynman
integrals leads to the study of the rationality of certain double covers. Besier
and Festi \cite{BesierFesti2021} point to radical parametrizations as the
natural framework when such a rationalization is not possible. The more general case of ordinary, and partial,  differential equations was treated in \cite{CSSV2021,FS2024}.

Parametrizations by radicals also arise in cryptography. Couveignes and
Kammerer \cite{CouveignesKammerer2012} construct parametrizations of elliptic
curves over finite fields by means of cubic radicals in order to obtain
deterministic hash functions into elliptic curves, and they present their
construction as a special case of the general problem of parametrizing curves
by radicals. In the study of the real varieties associated with the normalized
Ricci flow on generalized Wallach spaces, Batkhin \cite{Bartkhin} proposes
radical parametrization as a means of describing these varieties globally.

In computer algebra and computational geometry, Lewis \cite{Lewis2018} uses the
Dixon resultant to obtain implicit parametrizations of intersection curves of
surfaces, a notion closely related to the description of curves introduced
in \cite{RSV2017}. Agarwal et al. \cite{AgarwalEtAl2022} include radical
parametrizations of algebraic curves among the admissible representations in
their data structures for intersection queries on semi-algebraic objects in
$\mathbb{R}^3$. Finally, radical parametrizations of curves play a central role
both in the computation of radical solutions of first-order algebraic ordinary
differential equations \cite{CSSV2021,FS2024,Grasegger2014} 
and in the explicit determination of Zolotarev
polynomials \cite{GraseggerVo2017,RackVajda2021}. Regarding the latter problem,
Rack and Vajda \cite{RackVajda2021} obtain parametrizations in terms of nested
square roots for degrees $5 \le n \le 7$.

Several questions,  {with impact on applications and computations, were left open in the algebraic treatment of radical parametrizations carried out in \cite{RSV2017}}. Among
them the irreducibility of the incidence variety $\cA $ and the
computational role of the radical tower were opened. In this paper, we answer these
questions and, as a consequence, improve the algorithmic treatment of
radical varieties. Our approach rests on two ideas: the use of a canonical
representation of the elements of the tower, and the observation that the
theory is governed by the radical tower rather than by the particular
parametrization. Our main contributions are the following.

\noindent\emph{Refinement of the general framework.}
We prove that the tower variety of every radical parametrization  depends only on its tower (see Corollary \ref{co: no need of para}).  Moreover, given a tower $\T$ we show how to compute an isomorphic tower $\T'$ to $\T$ (see Algorithm \ref{alg: tower}), so that $\cV_\T$ and $\cV_{\T'}$ are birational and $\cV_{\T'}$ 
 can be obtained without any elimination (Corollary \ref{co: no denom}, Theorem \ref{theo-change of var}). We also
prove that the incidence variety $\cA $ is always irreducible
(Theorem \ref{thm: incidence var}), so no prime decomposition is required to identify the
component associated with the parametrization.

\noindent\emph{Universal radical parametrizations.}
We introduce the notion of universal radical parametrization of a tower and
show that its incidence, radical and tower varieties are isomorphic
(Theorem \ref{theo: univ para var}). As a consequence, we settle the case left open in \cite{CSSV2021}, and the same argument applies to \cite{FS2024}, when the coefficients form a radical sequence (Remark \ref{rem-edo}).

\noindent\emph{Minimal radical parametrizations.}
We show that radical subtowers form a lattice under union and intersection
(Proposition \ref{prop:inters of tower}), which yields a minimal radical subtower, and hence a
minimal radical parametrization, for every radical variety. We also give
algorithms that compute the minimal subtower directly from the canonical
representation. Working with minimal subtowers simplifies the computations,
since the tower variety keeps its dimension but lives in an affine space of
smaller dimension, and it also helps to detect theoretical properties such
as unirationality: the minimal subtower may have a rational tower variety
even when the original one does not (Remark \ref{rem:smaller tower}, Example \ref{ex: minimal tower better}).

\noindent\emph{Dominance between towers.}
 If the last field of a radical tower $\T^*$ is contained in that of another radical tower $\T$, there is a unique
dominant rational map $\iota\colon\cV_{\T}\dashrightarrow\cV_{\T^*}$ between the corresponding tower varieties,
independent of the parametrization and compatible with all of them
(Theorem \ref{theo:initial maps}). Consequently, unirationality passes from $\cV_{\T}$ to
$\cV_{\T^*}$ and to every radical variety defined over it. We also show  that a radical variety may be unirational but
not rational while its tower variety is rational (see Example \ref{ex:nonluroth}).

\noindent\emph{The real case.}
We extend the theory to towers defined over subfields of $\mathbb{C}$ and
study the real case in detail. Introducing the $\mathbb{R}$-associated
parametrizations, obtained by sign changes of the radicals of even index, we
prove that their real domains cover the real points of the tower variety,
outside the zeros of the denominators, with equality for minimal
parametrizations (Theorem \ref{thm:real domain}). This gives a semialgebraic cover of
the real points of the tower variety.

\noindent\emph{Computational experiments.}
We compare the approach of \cite{RSV2017} with our methods on twelve
towers with three to ten floors (Appendix \ref{app:towers}).
Algorithm \ref{alg: tower} gives, without elimination, the tower variety of
an isomorphic tower in all cases in less than three seconds, whereas the
approach of \cite{RSV2017} only finishes for the towers with three floors.
For the radical variety, working on the minimal radical subtower makes the
computation feasible for four of the five towers with ten floors, where the
other methods do not finish,  even though  none of the methods finish in five cases  (Tables \ref{tab:times}
and \ref{tab:summary}).

The experiments were designed by the authors; their implementation was carried out with the AI system Claude (Anthropic) under the supervision of the authors, who reviewed the methodology, the code and the results. The scripts and data needed to reproduce the experiments are available on
Zenodo \cite{zenodo}.

  The structure of the paper is as follows.
Section \ref{sec:preliminares} recalls the basic notions on radical parametrizations and
Section \ref{sec:refinamient} refines the general framework. Section \ref{sec:universal-param} studies universal
parametrizations and Section \ref{sec:subtower} minimal ones, together with the
corresponding algorithms. Section \ref{sec:dominant} deals with dominance between towers, 
Section \ref{sec:subsfields} with subfields and the real case, and
Section \ref{sec:computation} presents the computational experiments.
Appendix \ref{app:towers} lists the radical towers and parametrizations
used in these experiments.

\section{Preliminaries on Radical Parametrizations}\label{sec:preliminares}
In this section, we recall the main concepts on radical parametrizations. For additional details, the interested reader is referred to \cite{RSV2017}, on which this section is essentially based. 

Let $\oT=(T_1,\ldots,T_n)$ be a tuple of independent parameters. First, 
we consider the   \textsf{radical tower of fields} $\T$  
\begin{equation}\label{tower}
\T:=\left[\C(\oT)=:\F_0\subsetneq \F_1=\F_0(\delta_1)\subsetneq\F_2=\F_1(\delta_2)\subsetneq \cdots \subsetneq \F_m=\F_{m-1}(\delta_m) \right]
\end{equation}
where 
\begin{equation}\label{eq-delta} 
\text{$\F_i=\F_{i-1}(\delta_i)$, $\delta_i^{e_i} = \alpha_i \in \F_{i-1}$, for some  $e_i \in \mathbb{Z}_{>1}$, }
\end{equation}
is a field extension of $\F_{i-1}$ of degree $[\F_i:\F_{i-1}]=e_i$. 
So 
\begin{equation}\label{eq-Etilde}
\widehat{E}_i:=\Delta_i^{e_i}-\alpha_i\in \F_{i-1}[\Delta_i]
\end{equation}
is irreducible over $\F_{i-1}$, where $\Delta_i$ is a new variable. 
That is
\begin{equation}\label{eq-Fi}
\F_i=\F_{i-1}[\Delta_i]/\langle \widehat{E}_i\rangle.
\end{equation}
We will use the notation $\overline{\delta}=(\delta_1,\ldots,\delta_m)$ and $\overline{\Delta}=(\Delta_1,\ldots,\Delta_m)$. 
The distinction between $\delta_i$ and $\Delta_i$ lies in the nature of both objects: $\delta_i$ is an analytic object and $\Delta_i$ is purely algebraic. More concretely, $\delta_i$ represents one of the complex radical functions
\begin{equation}\label{eq-raices-unidad}
\xi_i^k\cdot \sqrt[e_i]{\alpha_i(t)}, \,\,\,\text{where} \,\,\,\xi_i=\exp^{\frac{2\pi \sqrt{-1}}{e_i}}
\end{equation} and $0\leq k\leq e_i-1$. 
On the other hand, $\Delta_i$ is an algebraic variable used in the polynomial ring $\F_{i-1}[\Delta_i]$ to build algebraically the radical tower. Throughout this section, we discuss in more detail the relation between $\delta_i$ and $\Delta_i$.

In the sequel we will express the polynomial $\widehat{E}_i$  as
\begin{equation}\label{eq-hE} 
\widehat{E}_i=\Delta_i^{e_i}-\dfrac{\alpha_{i,\mathrm{N}}(\overline{T},\overline{\Delta})}{\alpha_{i,\mathrm{D}}(\overline{T},\overline{\Delta})}\in\F_{i-1}[\Delta_i] \,\text{ for }\, 1\leq i\leq m.
\end{equation}

A  \textsf{radical parametrization} is  an element   
\begin{equation}\label{eq-p}
\cP(\oT,\oD):=(p_1(\oT,\oD),\ldots,p_{r}(\oT,\oD))\in \F_{m}^{r},\end{equation}
with $r>n$,  whose Jacobian matrix $\mathcal{J}_{\oT}(\cP)$, with respect to $\oT$, has full rank. Note that if $m=0$, then we get a unirational parametrization. As above, we will express each $p_i$ as
\begin{equation}\label{eq-p_i}
    p_i(\oT,\oD)=\dfrac{p_{i,\mathrm{N}}(\oT,\oD)}{p_{i,\mathrm{D}}(\oT,\oD)} .
 \end{equation}
Then,    
a  \textsf{radical parametrization} is a triple
\begin{equation}\label{def-rad-param} \mathcal{P}:=\left(\mathbb{T},\cP=(p_1,\ldots,p_{r}),(\overline{t}_0,\overline{\delta}(\overline{t}_0))\right)
\end{equation}
where $\mathbb{T}$ is the radical tower \eqref{tower}, $\cP$ is as in \eqref{eq-p}, and $\overline{t}_0 \in \C^n$ is such that $\cP(\overline{t}_0,\overline{\delta}(\overline{t}_0))$ is well defined. The evaluation $\overline\delta(\ot_0)$ is indeed used to choose recursively one of the $e_i$ solutions to the equation $\widehat{E}_i=0$ corresponding to the $e_i$ roots in $\C$.
In other words, $\overline{\delta}(\overline{t}_0)$ fixes the complex radical function in \eqref{eq-raices-unidad} that each $\delta_i$ represents.

Let  $\emptyset \neq \mathscr{F}\subset    \C[x_1,\ldots,x_r]$.
We say that $\cP$ is a \textsf{radical parametrization} of $\mathbb V_\C(\mathscr F)$ if $F(p_1,\dots,p_r)=0$ for every $F\in\mathscr  F$ and the radical map $t\mapsto\cP(\ot,\overline{\delta}(\ot))$, defined on a nonempty  Euclidean open  subset $\Omega\subset\C^n$, is dominant onto $\mathbb V_\C(\mathscr{F})$. In this case we say that   $\V_\C(\mathscr{F})$ is a  \textsf{radical variety.} 
 
Let $\oX=(X_1,\ldots,X_r),$ $\oT=(T_1,\ldots,T_n),\oD=(\Delta_1,\ldots,\Delta_m)$,   where $X_i,T_i,\Delta_j$ are variables, with $r>n$. We introduce the polynomials (see \eqref{eq-hE}, \eqref{eq-p_i})
\begin{equation}\label{eq-pols}
\left.
\begin{array}{llll} 
   E_i(\oT,\oD) &:= & \alpha_{i,\mathrm{D}}(\oT,\oD)\Delta_i^{e_i} - \alpha_{i,\mathrm{N}}(\oT,\oD), &i\in \{1,\ldots,m\}, \\
   G_i(X_i,\oT,\oD) &:=& p_{i,\mathrm{D}}(\oT,\oD)X_i -p _{i,\mathrm{N}}(\oT,\oD), &i\in \{1,\ldots,r\},\\
  G_W(W,\oT,\oD) &:=& W\cdot A(\oT,\overline{\Delta})-1,& 
    \end{array} \right\}
\end{equation}
where $A=\mathrm{lcm}(\alpha_{1,\mathrm{D}},\ldots, \alpha_{m,\mathrm{D}},
  p_{1,\mathrm{D}},\ldots, p_{r,\mathrm{D}})$. 
We consider the  variety
\[ \cD=\mathbb{V}_{\C}(  E_1,\ldots,E_m,G_1,\ldots,G_r, G_W  ) \subset \C^{n+m+r+1}.  \]
Now, we introduce the projection (see Diagram \ref{fig:diagram1})
\begin{equation}\label{eq-pitilde}
\tilde{\pi}:  \cD  \longrightarrow  \C^{n+m+r}:  (\ot, \overline{\delta}, \overline{x},w)  \longmapsto   (\ot, \overline{\delta}, \overline{x}).
\end{equation}
 The variety $\cA$, that we call the  \textsf{incidence variety}, is defined as the Zariski closure of the projection of $\cD$ via $\tilde{\pi}$
\begin{equation}\label{eq-cA} \cA:=\overline{\tilde{\pi}(\cD)}.
\end{equation}
 In \cite{RSV2017}, the irreducibility of $\cA$ is not established. Instead, the authors introduce $\mathcal{B}$ as the irreducible component of $\cA$ associated with the given parametrization; for this purpose,  $(\overline{t}_0,\overline{\delta}(\overline{t}_0))$ is used. However, in Section \ref{sec:refinamient}, Theorem \ref{thm: incidence var}, we prove that $\cA$ is, in fact, irreducible. Consequently, $\cA=\mathcal{B}$, which will simplify Diagram  \ref{fig:diagram1} into Diagram \ref{fig:diagram2}. Nevertheless, in order to remain faithful to the exposition in \cite{RSV2017}, we present  this section following the original paper. For this purpose, we consider the map 
\begin{equation}\label{eq-varphi} \varphi: \C^n \rightarrow \cA; \,\,\ot \mapsto (\ot,  \overline{\delta}(\ot), \overline{x}(\ot,\overline{\delta}(\ot)))
\end{equation}
and we define  
 $ \mathcal{B}=\overline{\mathrm{im}(\varphi)}.$ 
We next introduce two new projections 
\begin{equation}\label{eq-pi}
\begin{array}{ll}
 \pi:  \mathcal{B} \longrightarrow  \C^{r}:  (\ot, \overline{\delta}, \overline{x})   \longmapsto   \overline{x}, \,\,\,\,\,
 \pi^*:  \mathcal{B} \longrightarrow  \C^{n+m}:  (\ot, \overline{\delta}, \overline{x} )  \longmapsto  (\ot, \overline{\delta}). 
 \end{array}
\end{equation}
Associated with these maps, we introduce the varieties $\cV_\mathscr{P}:=\overline{\pi(\mathcal{B})}$  and $\cT:=\overline{\pi^*(\mathcal{B})}$. 
We call $\cV_\mathscr{P}$ the  \textsf{radical variety} and $\cT$ the  \textsf{tower variety}.

The projection $\pi^*$ is birational (see \cite{RSV2017}), hence $\mathcal{B}$ and $\cT$ are birationally equivalent.  In addition, we can introduce a rational map relating  $\cT$  and $\cV_\mathscr{P}$. More precisely 
\[ \cR:=\pi \circ (\pi^{*})^{-1}: \cT \longrightarrow \cV_\cP. \]
Moreover, considering the maps 
\[ \begin{array}{ll} \psi: \C^n \longrightarrow \cT;\,\, \ot \longmapsto (\ot,\overline{\delta}), \,\,\,\,\,\,\, \cP: \Omega\subset \C^n \longrightarrow \cV_\mathscr{P} \subset \C^r; \ot \longmapsto \cP(\ot),
\end{array} \]
we get the commutative Diagram   \ref{fig:diagram1}. 
In \eqref{eq-p}, a radical parametrization $\cP$ is represented as an element in $\F_m^r$, and hence, as a tuple of polynomials in $\F_0[\overline{\Delta}]$ modulo the ideal $\E$. To represent $\cP$ as an analytic function $\cP:\C^n\dashrightarrow\C^r$ we need to identify inductively each $\Delta_i$ with $\delta_i$, which is one of the $e_i$ complex radical functions in \eqref{eq-raices-unidad}. 
Such a choice of complex radical can be done through the evaluation $\overline\delta(\overline{t}_0)$. This selection is also equivalent to fixing the map $\psi$ in Diagram \ref{fig:diagram1}. In Section \ref{sec:universal-param}, we study the map $\psi$ as a map intrinsic of the tower and not on the parametrization.

\begin{diagram}[ht]
\centering
 
\begin{tikzcd}
\cD\subset \mathbb{C}^{n+m+r+1} \arrow[d, "\begin{array}{l} \tilde{\pi} \\  \end{array}"]                                                &  &  &  \\
\overline{\mathrm{im}(\varphi)}=\mathcal{B}\subset \cA \subset \mathbb{C}^{n+m+r} \arrow[dd, "{\begin{array}{c} {\pi}   \end{array}}"'] \arrow[rrd, "\begin{array}{c} {\pi^*} \,\, (1:1)  \end{array}"] &  & & \\      &  & \cT\subset\mathbb{C}^{n+m} \arrow[lld, "\cR"'] & 
\\
\cV_\mathscr{P}\subset \mathbb{C}^{r}   &  & \mathbb{C}^n \arrow[ll, "\cP"'] \arrow[u, "\psi"'] \arrow[lluu, "\varphi"'] &                                       \end{tikzcd}

\caption{Commutative diagram of the radical parametrization}
\label{fig:diagram1}
\end{diagram} 

\begin{remark}\label{Remark-first}
We summarize now some of the most relevant properties of this construction.
\begin{enumerate}
\item $\cV_\cP$ and $\cT$ are irreducible and $\dim(\mathcal{B})=\dim(\cV_\mathscr{P})=\dim(\cT)=n,$ see  \cite[Theorem 3.11, Theorem 4.6]{RSV2017}.
\item\label{Rem2} If $\cT$ is unirational, parametrized by $\mathscr{T}:\C^n\rightarrow \cT$, then $\cR\circ\mathscr{T}$ is a unirational parametrization of $\cV_\mathscr{P}$, see \cite[Theorem 4.11]{RSV2017}.  However, in general, $\cV_\cP$ and $\cV_\T$ are not birationally equivalent; see Examples   \ref{ex: minimal tower better} and \ref{ex-minimal-strong}. An interesting situation is also described in Example \ref{ex:nonluroth} where the radical variety is unirational, non-rational, and the tower variety is rational.
\item Using Gr\"obner bases, one can compute the generators of $\cV_\cP$ and $\cT$, see e.g. \cite[Closure Theorem]{CLO2015}.
\end{enumerate}
\end{remark}

\section{Refinements of Radical Parametrizations}\label{sec:refinamient}

In this section we present new results regarding the incidence and tower varieties introduced in the previous section, which simplify   Diagram  \ref{fig:diagram1}. For this purpose, a key point in our reasoning is to use a suitable representation of the elements in $\F_m$. 

Let $\widehat{E}_i$ be as in \eqref{eq-hE}. By construction, we can write the field $\F_1$ as 
    \[
    \F_1=\F_0[\Delta_1]/\langle \widehat{E}_1\rangle = \F_0[\Delta_1]/\langle E_1\rangle
    \]
    by multiplying $\widehat{E}_1$  by $\alpha_{1,D}(\overline{T})$, which is a unit of $\F_0$. Note that since $\F_0[\Delta_1]$ is a principal ideal domain and $\widehat{E}_1$ and $E_1$ are irreducible by construction, the above quotient is a field.
    Repeating this process inductively,  we get that $\widehat{E}_1,\ldots,\widehat{E}_m$ and  
    $E_1,\ldots,E_m $ can be expressed as polynomials in $\F_0[\oD]$ as 
    \begin{equation}\label{eq-ET}
    \begin{array}{l}
    \displaystyle
    \widehat{E}_i(\oT, \oD)=  \Delta_{i}^{e_i}-  \frac{\alpha_{i,\mathrm{N}}(\oT,\oD)}{\alpha_{i,\mathrm{D}}(\oT)}\, \,\,\,\,\text{and}\,\,\,\,\,\displaystyle
    E_i(\oT, \oD)= \alpha_{i,\mathrm{D}}(\oT) \Delta_{i}^{e_i}-  \alpha_{i,\mathrm{N}}(\oT,\oD).
    \end{array}
    \end{equation}
   In particular, we get that 
    \begin{equation}\label{eq:field desc}
\F_m=\F_0[\overline{\Delta}]/\langle\widehat{E}_1,\ldots,\widehat{E}_m\rangle=\F_0[\overline{\Delta}]/\langle {E}_1,\ldots, {E}_m\rangle.
    \end{equation} 
    Similarly, the polynomials $G_i$ in \eqref{eq-pols} can be expressed as 
  \begin{equation}\label{eq-ETGi}
    G_i(X_i,\oT, \oD)= p_{i,\mathrm{D}}(\oT) X_i-  p_{i,\mathrm{N}}(\oT,\oD)
    \end{equation}  

    \begin{remark}\label{remark-bases1}
   We observe that  $\F_j$ can be seen as a $\F_0$-vector space of dimension $e_1\cdots e_j$. A basis of this vector space is 
\begin{equation}\label{eq:basis Fm}
\Base{\T,\F_j}:=\{\Delta_1^{u_1}\cdots\Delta_j^{u_j}: \text{ where } 0\leq u_i\leq e_i-1 \text{ for }i=1,\ldots,j\}.
\end{equation}
Computationally, the coordinates w.r.t. $\Base{\T,\F_j}$ can be determined 
using Gr\"obner bases. 
\end{remark}

 \begin{definition}\label{def-canonical}
We say that $\beta\in \F_j$ is represented in \textsf{canonical form} if it is expressed as a vector of $\F_j$ w.r.t. the basis $\Base{\T,\F_j}$.
\end{definition}
In the sequel, \textsf{we assume} that $\alpha_i$ (see \eqref{eq-delta}) and $p_i$ (see \eqref{eq-p}) are expressed canonically. Therefore, $\alpha_{i,\mathrm{D}}$ and $p_{i,\mathrm{D}}$ depend only on $\oT$.

Now, let us consider the polynomial ideals 
\begin{equation}\label{eq-E}
\E= \langle E_1,\ldots,E_m \rangle\subset \C[\oT,\oD] \ \text{ and } \ \widehat{\E}= \langle \widehat{E}_1,\ldots,\widehat{E}_m \rangle \subset \C(\oT)[\oD] 
\end{equation}
where $\widehat{E}_i$ and $E_i$ are introduced in \eqref{eq-ET}. 
We denote by $U_\cP$ the Zariski-open subset on which the denominators of the construction do not vanish, and $E_j,G_j$ are as in \eqref{eq-pols}.
Similarly, let $U_\T$ be the Zariski-open subset on which the denominators $\alpha_{1,\mathrm{D}},\ldots,\alpha_{m,\mathrm{D}}$ in \eqref{eq-ET} do not vanish. The opens $U_\T$ and $U_\cP$ are defined in $\C^n$, and hence, they can also be seen as opens in $\C^{n+m}$ and $\C^{n+m+r}$.
Additionally, we consider the varieties
\[
\mathscr{Y}_\cP:=\mathbb{V}(E_1,\ldots,E_m,G_1,\ldots,G_r)\subset\C^{n+m+r} \ \text{ and } \ \mathscr{Y}_\T:=\mathbb{V}(E_1,\ldots,E_m)\subset\C^{n+m}.
\] 
Note that we have that 
$
\cA =\overline{\mathscr{Y}_\cP\cap U_\cP}
$.
We start with the following lemma.

\begin{lemma}\label{lemma- loc injection}
   Let  $\alpha_{\mathrm{D}}(\oT)=\alpha_{1,\mathrm{D}}(\oT)\cdots \alpha_{m,\mathrm{D}}(\oT)$. Then the localization map 
   \[
   \C[\oT,\oD]_{\alpha_{\mathrm{D}}(\oT)}/\widehat{\E}\longrightarrow \F_0[\oD]/\widehat{\E}=\F_m
   \]
   is injective. Here $\C[\oT,\oD]_{\alpha_{\mathrm{D}}(\oT)}$ is the localization by $\alpha_{\mathrm{D}}(\oT)$.
\end{lemma}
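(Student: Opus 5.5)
The plan is to exploit the fact that, after localizing at $\alpha_{\mathrm{D}}(\oT)$, every $\widehat{E}_i$ is a monic polynomial in $\Delta_i$ with coefficients in $R:=\C[\oT]_{\alpha_{\mathrm{D}}}[\Delta_1,\ldots,\Delta_{i-1}]$. This holds by the canonical-form assumption: $\alpha_{i,\mathrm{D}}$ depends only on $\oT$, so $\alpha_{i,\mathrm{N}}/\alpha_{i,\mathrm{D}}$ lies in $\C[\oT]_{\alpha_{\mathrm{D}}}[\Delta_1,\ldots,\Delta_{i-1}]$. Set $S=\C[\oT]_{\alpha_{\mathrm{D}}}$, a subring of $\F_0$. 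Consider the monomial set $\Base{\T,\F_m}$ from Remark \ref{remark-bases1}. I will show that $S[\oD]/\widehat{\E}$ is a free $S$-module with basis $\Base{\T,\F_m}$. Together with the fact that the same set is an $\F_0$-basis of $\F_m=\F_0[\oD]/\widehat{\E}$, this gives injectivity.

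First step: a spanning statement. Because $\widehat{E}_m,\ldots,\widehat{E}_1$ are monic in their respective leading variables with coefficients in $S$, we can reduce any $f\in S[\oD]$ by iterated division. We divide successively by $\widehat{E}_m$ in $\Delta_m$, then by $\widehat{E}_{m-1}$ in $\Delta_{m-1}$, and so on down to $\widehat{E}_1$. Division by a monic polynomial never introduces denominators, so all coefficients stay in $S$. One checks that reducing a lower variable does not raise the degree of any higher variable: $\widehat{E}_j$ involves only $\Delta_1,\ldots,\Delta_j$, so division by it does not touch the $\Delta_k$ with $k>j$. Hence $f\equiv \sum_u c_u\,\Delta^u \pmod{\widehat{\E}\cap S[\oD]}$, with $c_u\in S$ and $\Delta^u$ ranging over $\Base{\T,\F_m}$. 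Note that here the relevant ideal is the one generated by the $\widehat{E}_i$ in $S[\oD]$, which is what the quotient $\C[\oT,\oD]_{\alpha_{\mathrm{D}}}/\widehat{\E}$ means.

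Second step: conclude injectivity. Suppose $f\in S[\oD]$ maps to $0$ in $\F_m$. Write $f\equiv\sum_u c_u\Delta^u$ modulo the ideal of $S[\oD]$, as in the first step. Its image in $\F_m$ is $\sum_u c_u\Delta^u=0$. By Remark \ref{remark-bases1}, $\Base{\T,\F_m}$ is linearly independent over $\F_0\supset S$, so every $c_u$ vanishes. Hence $f$ lies in the ideal generated by the $\widehat{E}_i$ in $S[\oD]$, which is the required injectivity. Equivalently, the $\widehat{E}_i$ form a Gr\"obner basis for a lex order in $S[\oD]$, so extending and then contracting the ideal returns the same ideal.

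The main obstacle is the first step. The care needed there is to justify that the coefficients of the $\widehat{E}_i$ really lie in $S[\Delta_1,\ldots,\Delta_{i-1}]$, which is exactly where the canonical-form hypothesis on $\alpha_i$ is used; otherwise denominators involving the $\Delta$'s would appear. One must also check that the triangular division terminates with remainders supported on $\Base{\T,\F_m}$. Everything else is formal.
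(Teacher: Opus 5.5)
Your proof is correct and is essentially the paper's argument. Both rest on two facts: $\widehat{E}_i$ is monic over $S[\Delta_1,\ldots,\Delta_{i-1}]$ with $S=\C[\oT]_{\alpha_{\mathrm{D}}(\oT)}$, which is where the canonical form enters, and remainders over the field are unique. The paper packages this as an induction $R_i=R_{i-1}[\Delta_i]/\langle\widehat{E}_i\rangle\hookrightarrow\F_i$ on the floors, whereas you unroll it into a single triangular reduction onto $\Base{\T,\F_m}$ followed by the $\F_0$-linear independence of Remark \ref{remark-bases1}.

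One notational slip: the congruence in your first step must be modulo the ideal generated by the $\widehat{E}_i$ in $S[\oD]$, which is what the division actually produces. It should not be modulo $\widehat{\E}\cap S[\oD]$, since that would make the injectivity claim circular, as you yourself note immediately afterwards.
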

\begin{proof}
     For $0\leq i\leq m$, we consider the ring
    \[
    R_i:=R_{i-1}[\Delta_i]/\langle \widehat{E}_i \rangle , \ \text{ with } \ R_0=\C[\oT]_{\alpha_{\mathrm{D}}(\oT)}.
    \]
    We prove that  by induction on $0\leq i\leq m$, the localization map $ R_i\rightarrow \F_i$ is injective. For $i=0$, it holds as $\C[\oT]$ is included in $R_0$. Now, assume that $R_{i-1}\hookrightarrow\F_{i-1}$. 
    Let $g(\Delta_i)$ be in the kernel of the localization map  $R_i\rightarrow\F_i$. In other words, $g(\Delta_i)\in R_{i-1}[\Delta_i]$ belongs to $\langle \widehat{E}_i\rangle$ after moving to $\F_{i-1}[\Delta_i]$. Since $\widehat{E}_i$ is monic in $R_{i-1}[\Delta_i]$, division by $\widehat{E}_i$ can be  carried out in $R_{i-1}[\Delta_i]$.
    Dividing $g$ by $ \widehat{E}_i$ we get that 
    \[
    g=q\cdot \widehat{E}_i+r \ \text{ for }q,r\in R_{i-1}[\Delta_i] \ \text{ with } \deg( r)<e_i.
    \]
    However, $g$ belong to $\langle  \widehat{E}_i\rangle $ in $\F_{i-1}[\Delta_i]$. By uniqueness of the remainder,  $r=0$  in $\F_{i-1}[\Delta_i]$. By induction, $r=0$ in $R_{i-1}[\Delta_i]$. 
    So, $g\in\langle  \widehat{E}_i\rangle$ in $R_{i-1}[\Delta_i]$, and hence $g=0$ in $R_i$. We conclude that  the localization map is injective.
\end{proof}

\begin{proposition}\label{prop- new E}
    $\cV_\T = \overline{\mathscr{Y}_\T\cap U_\T}$ and $\cV_\T = \overline{\mathscr{Y}_\T\cap U_\cP}$.
    
\end{proposition}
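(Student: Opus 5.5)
We will show that $\overline{\mathscr{Y}_\T\cap U_\T}$ is irreducible of dimension $n$ and contains $\mathcal{B}$-image points densely, hence equals $\cT$. Recall that $\cT=\overline{\pi^*(\mathcal{B})}=\overline{\mathrm{im}(\psi)}$, where $\psi(\ot)=(\ot,\od(\ot))$ on the Euclidean open $\Omega$. Since $\od(\ot)$ satisfies $E_i(\ot,\od(\ot))=0$ and $\alpha_\mathrm{D}(\ot)\neq0$ on $\Omega$ (shrinking $\Omega$ if necessary), we get $\mathrm{im}(\psi)\subset \mathscr{Y}_\T\cap U_\T$. Thus $\cT\subseteq \overline{\mathscr{Y}_\T\cap U_\T}$, and likewise $\cT\subseteq\overline{\mathscr{Y}_\T\cap U_\cP}$ after shrinking $\Omega$ so that the $p_{i,\mathrm{D}}$ also do not vanish. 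It remains to prove the reverse inclusions.

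\textbf{Step 1: the ideal of $\overline{\mathscr{Y}_\T\cap U_\T}$.} The Zariski closure of $\mathscr{Y}_\T\cap U_\T$ is $\V(I)$, where $I=\E\,\C[\oT,\oD]_{\alpha_\mathrm{D}}\cap\C[\oT,\oD]$ is the saturation of $\E$ by $\alpha_\mathrm{D}$, up to radicals. Inside the localization, $\E$ and $\widehat{\E}$ coincide, because each $\alpha_{i,\mathrm{D}}$ is a unit there. By Lemma \ref{lemma- loc injection}, $\C[\oT,\oD]_{\alpha_\mathrm{D}}/\widehat{\E}$ embeds in the field $\F_m$, so it is a domain. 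Hence $\widehat{\E}\,\C[\oT,\oD]_{\alpha_\mathrm{D}}$ is prime, and its contraction $I$ is a prime ideal of $\C[\oT,\oD]$. It follows that $\overline{\mathscr{Y}_\T\cap U_\T}=\V(I)$ is irreducible. Moreover $\C[\oT,\oD]/I$ embeds in $\F_m$, whose fraction field has transcendence degree $n$ over $\C$, so $\dim \V(I)=n$. For the same reason, the localization is integral over $\C[\oT]_{\alpha_\mathrm{D}}$, since each $\widehat{E}_i$ is monic.

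\textbf{Step 2: conclude.} We have $\cT\subseteq\V(I)$, both varieties are irreducible of dimension $n$ (by Remark \ref{Remark-first} for $\cT$), so they are equal. For $U_\cP$, note that $U_\cP\subseteq U_\T$ is a nonempty Zariski open subset defined by nonvanishing of polynomials in $\oT$ only. These polynomials are nonzero elements of $\C[\oT]\subset\F_m$, hence nonzero in the domain $\C[\oT,\oD]/I$. Therefore $\mathscr{Y}_\T\cap U_\cP$ is a nonempty open subset of $\overline{\mathscr{Y}_\T\cap U_\T}\cap U_\T$ in an irreducible variety, and is dense in it. This gives the second equality.

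\textbf{Main obstacle.} The delicate point is Step 1: we must justify that the closure of $\mathscr{Y}_\T\cap U_\T$ is exactly $\V$ of the contracted ideal, and that this ideal is prime. This is the standard correspondence between localization and the open set $\{\alpha_\mathrm{D}\neq0\}$, combined with Lemma \ref{lemma- loc injection}. A secondary point is ensuring that $\mathrm{im}(\psi)$ is nonempty inside $U_\T$, which holds because $\alpha_\mathrm{D}\not\equiv0$.
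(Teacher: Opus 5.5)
Your proof is correct and follows the paper's argument: Lemma \ref{lemma- loc injection} makes the localized coordinate ring $\C[\oT,\oD]_{\alpha_{\mathrm{D}}}/\widehat{\E}$ a domain inside $\F_m$, so $\overline{\mathscr{Y}_\T\cap U_\T}$ is irreducible of dimension $n$ and contains $\cV_\T$, which has the same dimension; the $U_\cP$ case then follows because nonzero polynomials in $\oT$ cannot vanish on all of it. The only cosmetic differences are that you obtain the containment from $\mathrm{im}(\psi)$ rather than from $\pi^*(\cA)\subseteq\mathscr{Y}_\T$, and that you phrase irreducibility through the prime saturation ideal. Shrinking $\Omega$ in the first step does not change the closure, since $\psi$ is continuous and $\Omega\setminus\V(\alpha_{\mathrm{D}})$ is Euclidean-dense in $\Omega$.
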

\begin{proof}
    We observe that $\pi^{*}(\cA)$ is contained in $\mathscr{Y}_\T$. Since $\mathcal{B} $ is an irreducible component of $\cA$, we have that $\cV_\T$ is contained in $\mathscr{Y}_\T$. Therefore, it is enough to show that $\overline{\mathscr{Y}_\T\cap U_\T}$ is irreducible of dimension $n$. The coordinate ring of $\mathscr{Y}_\T\cap U_\T$ is the localization
    \begin{equation}\label{eq-local}
   \left( \C[\oT,\oD]/\E\right)_{\alpha_{\mathrm{D}}(\oT)},
    \end{equation}
    where $\alpha_{\mathrm{D}}(\oT)=\alpha_{1,\mathrm{D}}(\oT)\cdots \alpha_{m,\mathrm{D}}(\oT)$. In particular, we have
    \[
    \left( \C[\oT,\oD]/\E\right)_{\alpha_{\mathrm{D}}(\oT)}= \C[\oT,\oD]_{\alpha_{\mathrm{D}}(\oT)}/\E=\C[\oT,\oD]_{\alpha_{\mathrm{D}}(\oT)}/\widehat{\E}.
    \]
    By Lemma \ref{lemma- loc injection}, we have the following inclusion
    \[
    \C[\oT,\oD]_{\alpha_{\mathrm{D}}(\oT)}/\widehat{\E}\hookrightarrow \F_0[\oD]/\widehat{\E}=\F_m,
    \]
   and  we deduce that the coordinate ring of $\mathscr{Y}_\T\cap U_\T$ is an integral domain whose field of fraction is $\F_m$.
    Additionally, the transcendence degree of $\F_m$ over $\C$ is $n$, the number of parameters $T_i$. Thus, $\dim(\mathscr{Y}_\T\cap U_\T)=n$.
    We conclude that $\overline{\mathscr{Y}_\T\cap U_\T}$ is irreducible of dimension $n$, and hence, $\overline{\mathscr{Y}_\T\cap U_\T}=\cV_\T$.

    Now we check that $\cV_\T = \overline{\mathscr{Y}_\T\cap U_\cP}$. We have shown that $\cV_\T\cap U_\T = \mathscr{Y}_\T\cap U_\T$. Using $U_\cP\subseteq U_\T$, it is enough to check that $\cV_\T\cap U_\cP$ is dense in $\cV_\T$. On the contrary, assume that  $\cV_\T\cap U_\cP$ is not dense in $\cV_\T$. This means that $\cV_\T$ is contained in a hypersurface of the form $\V(h(\oT))$ for $h(\oT)\in\C[\oT]$. This contradicts the fact that the fraction field of $\cV_\T$ contains the subfield $\F_0$.
\end{proof}

\noindent From the proof of  Proposition \ref{prop- new E}  we get the already known result (see \cite{RSV2017} and Section \ref{sec:preliminares})

\begin{corollary}\label{cor-VT}
$\cT$ is irreducible of dimension $n$.
\end{corollary}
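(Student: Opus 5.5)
The plan is to read Corollary \ref{cor-VT} directly off the proof of Proposition \ref{prop- new E}, with no new computation. That proof shows that $\cV_\T=\overline{\mathscr{Y}_\T\cap U_\T}$. It also shows that $\mathscr{Y}_\T\cap U_\T$ is a principal Zariski open subset of $\mathscr{Y}_\T$, namely the complement of $\V(\alpha_{\mathrm{D}}(\oT))$, with coordinate ring
\[
\left(\C[\oT,\oD]/\E\right)_{\alpha_{\mathrm{D}}(\oT)}=\C[\oT,\oD]_{\alpha_{\mathrm{D}}(\oT)}/\widehat{\E}.
\]

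\textbf{Irreducibility.} First I invoke Lemma \ref{lemma- loc injection}. It says this coordinate ring embeds into the field $\F_m=\F_0[\oD]/\widehat{\E}$, so it is an integral domain. Its reduced affine scheme is therefore irreducible, and so $\mathscr{Y}_\T\cap U_\T$ is an irreducible quasi-affine variety. The Zariski closure of an irreducible set is irreducible, so $\cV_\T$ is irreducible.

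\textbf{Dimension.} The fraction field of the coordinate ring is exactly $\F_m$. It contains the images of $T_1,\ldots,T_n$ and of the $\delta_i$, and $\F_m$ is generated over the image by localization. Since $\F_m$ is a finite algebraic extension of $\F_0=\C(\oT)$, of degree $e_1\cdots e_m$, its transcendence degree over $\C$ is $n$. The dimension of an irreducible variety equals the transcendence degree of its function field, and passing to the closure does not change the function field. Hence $\dim \cV_\T=n$.

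\textbf{Main obstacle.} The only delicate point is the identification of the fraction field of the localized coordinate ring with $\F_m$, that is, surjectivity after passing to fractions. This holds because every element of $\F_0[\oD]/\widehat{\E}$ is a quotient of an element of $\C[\oT,\oD]$ by a nonzero polynomial in $\oT$. Combined with the injectivity given by Lemma \ref{lemma- loc injection}, this yields the claim.
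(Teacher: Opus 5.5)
Your proposal is correct and follows the paper's own route. The corollary is read off from the proof of Proposition \ref{prop- new E}: Lemma \ref{lemma- loc injection} makes the coordinate ring of $\mathscr{Y}_\T\cap U_\T$ a domain with fraction field $\F_m$, which has transcendence degree $n$ over $\C$, and $\cV_\T$ is the closure of that open set.

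The delicate point is the equality $\cV_\T=\overline{\mathscr{Y}_\T\cap U_\T}$, not the fraction-field identification. The paper obtains that equality using the already known fact (Remark \ref{Remark-first}) that $\cV_\T$ is irreducible of dimension $n$. For an argument that does not presuppose the statement, justify the equality instead from two facts: $\mathrm{im}\,\psi\subseteq\mathscr{Y}_\T\cap U_\T$ is dense in $\cV_\T$, and $\dim\cV_\T\geq n$ because $\widehat{\pi}\circ\psi=\mathrm{id}$ makes $\widehat{\pi}$ dominant.
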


From  Proposition \ref{prop- new E} we derive the following result.

\begin{theorem}\label{thm: incidence var} For every radical parametrization $(\T,\cP)$,
  $\cA$ is irreducible.
\end{theorem}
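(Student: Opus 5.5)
The plan is to show that $\cA$ is the closure of the graph of a regular map over the irreducible variety $\mathscr{Y}_\T\cap U_\cP$, and hence is itself irreducible.

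Recall that $\cA=\overline{\mathscr{Y}_\cP\cap U_\cP}$. Under the canonical-form assumption, each denominator $p_{i,\mathrm{D}}$ depends only on $\oT$, and on $U_\cP$ it does not vanish. Therefore, on $U_\cP$ the equation $G_i=0$ is equivalent to
\[
X_i=\frac{p_{i,\mathrm{N}}(\oT,\oD)}{p_{i,\mathrm{D}}(\oT)}.
\]
The $X$-coordinates are thus uniquely determined by $(\ot,\od)$. Consider the morphism
\[
\Phi:\mathscr{Y}_\T\cap U_\cP\to \C^{n+m+r},\qquad (\ot,\od)\mapsto \bigl(\ot,\od,\,p_{1,\mathrm{N}}/p_{1,\mathrm{D}},\ldots,p_{r,\mathrm{N}}/p_{r,\mathrm{D}}\bigr).
\]
This morphism is regular on $U_\cP$, and its image is exactly $\mathscr{Y}_\cP\cap U_\cP$. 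It is the graph of a regular map, so it is an isomorphism onto its image, with inverse given by the projection $\pi^*$ restricted to that image.

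By Proposition~\ref{prop- new E}, $\cV_\T=\overline{\mathscr{Y}_\T\cap U_\cP}$. In the proof of that proposition it is shown that $\mathscr{Y}_\T\cap U_\T$ is irreducible, because its coordinate ring embeds in the field $\F_m$. Since $U_\cP\subseteq U_\T$ is open, $\mathscr{Y}_\T\cap U_\cP$ is a nonempty open subset of an irreducible space. It is nonempty by the density argument in that proof, and therefore it is irreducible. Its image under the continuous map $\Phi$ is irreducible, and the Zariski closure of an irreducible set is irreducible. Hence $\cA=\overline{\Phi(\mathscr{Y}_\T\cap U_\cP)}$ is irreducible, and as a by-product $\cA=\mathcal{B}$.

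The main point needing care is the identification $\mathscr{Y}_\cP\cap U_\cP=\Phi(\mathscr{Y}_\T\cap U_\cP)$. Here it must be checked that $U_\cP$ is defined only by conditions in $\oT$: nonvanishing of the $\alpha_{i,\mathrm{D}}$ and $p_{i,\mathrm{D}}$, which holds because of the canonical representation. This makes $U_\cP$ the same open set whether it is viewed in $\C^{n+m}$ or in $\C^{n+m+r}$. Everything else follows formally from Proposition~\ref{prop- new E}.
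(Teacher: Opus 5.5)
Your proposal is correct and follows the same route as the paper. Both arguments identify $\mathscr{Y}_\cP\cap U_\cP$ with the graph of $(\ot,\od)\mapsto(p_{i,\mathrm{N}}/p_{i,\mathrm{D}})_i$ over $\mathscr{Y}_\T\cap U_\cP$; the paper phrases this as $\pi^*$ being an isomorphism with that inverse, which relies on $U_\cP$ depending only on $\oT$. Both then deduce that $\cA=\overline{\mathscr{Y}_\cP\cap U_\cP}$ is irreducible from Proposition~\ref{prop- new E}, and your remark that $\mathscr{Y}_\T\cap U_\cP$ is a nonempty open subset of the irreducible $\mathscr{Y}_\T\cap U_\T$ simply spells out a step the paper leaves implicit.
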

\begin{proof}
 Taking into account \eqref{eq-pitilde} and \eqref{eq-cA}, one has that 
  $$\cA\cap U_\cP= \mathscr{Y}_\cP\cap U_\cP  \ \text{ and } \
\cA= \overline{\mathscr{Y}_\cP\cap U_\cP}.
$$ 
  Hence, $\cA\cap U_\cP$ is a nonempty Zariski open subset of $\cA$. Using the projection $\pi^{\star}: \cA\rightarrow \mathbb{C}^{n+m}$ (see \eqref{eq-pi}), it holds that 
 $$\overline{\mathrm{Im}(\pi^{\star}(\cA)})\subseteq \mathscr{Y}_\T.$$
 In particular, we get a map $\pi^{*}:\cA\longrightarrow\mathscr{Y}_\T$.
 Since the polynomials involved in the definition of the open subset $U_\cP$ depend only on $\oT$, we can restrict $\pi^{*}$ also to $U_\cP$ and get the projection
 \begin{equation}\label{eq:proj birat}
 \pi^{*}:\cA\cap U_\cP=\mathscr{Y}_\cP\cap U_\cP\longrightarrow\mathscr{Y}_\T\cap U_\cP.
 \end{equation}
The map \eqref{eq:proj birat} is an isomorphism and its inverse is given by the map
\[
\begin{array}{ccc}
\displaystyle\left(\overline{t},\overline{\delta}\right)&\longmapsto& \displaystyle\left(\overline{t},\overline{\delta},\frac{p_{1,\mathrm{N}}(\overline{t},\overline{\delta})}{p_{1,\mathrm{D}}(\overline{t},\overline{\delta})},\ldots, \frac{p_{r,\mathrm{N}}(\overline{t},\overline{\delta})}{p_{r,\mathrm{D}}(\overline{t},\overline{\delta})}\right)
\end{array}
\]
The proof follows from Proposition \ref{prop- new E}, the irreducibility of $\cV_\T$ and that $\overline{\cA\cap U_\cP}=\cA$.
\end{proof}

From Theorem \ref{thm: incidence var} it follows that $\cD$ is  also irreducible since the map $\tilde{\pi}$  is birational and its inverse is defined over $\cA\cap U_\cP$.
In particular, it follows that all varieties in Diagram \ref{fig:diagram1} are irreducible. However, if we do not consider $G_W$, the variety $\mathscr{Y}_\cP$ might be reducible as illustrated in the following example.

\begin{example}
Consider the radical tower
\[
\T:=\left[
\F_0=\C(T)\subsetneq\F_1=\F_0\left(\sqrt{\frac{T}{T-2}}\right)\subsetneq \F_2=\F_1\left( \sqrt[3]{T^3+T^2+1}\right)
\right],
\]
and the radical parametrization $\cP=\left(\frac{\Delta_1}{T},T\right)$. Here, 
\[
\Delta_1^2=\frac{T}{T-2} \ \text{ and } \ \Delta_2^3=T^3+T^2+1.
\]
Then,  $\mathscr{Y}_\cP=\V(E_1,E_2,G_1,G_2)$  has four irreducible components: $\cA$ and three lines.  
\end{example}

  Part of the definition of radical parametrizations given in the literature and presented in  Section \ref{sec:preliminares} is the evaluation $\overline\delta(\overline{t}_0)$ chooses one of the $e_i$ solutions to the equation $\widehat{E}_i=0$ corresponding to the $e_i$ complex radicals; see \eqref{def-rad-param}. In particular, $\overline\delta(\overline{t}_0)$ identifies $\delta_i$ with $\Delta_i$.
  Additionally, $\overline\delta(\overline{t}_0)$ is used to choose $\mathscr{B}$ among the different components of $\cA$. By Theorem \ref{thm: incidence var}, $\cA$ is irreducible and this evaluation is no longer needed for this purpose. However, $\overline{\delta}(\overline{t}_0)$ could be used to specify which of the distinct complex radical functions in \eqref{eq-raices-unidad} we are using. This choice is equivalent to choosing the map $\psi$.

 \textsf{As a convention}, we will assume $\psi$ is the map given by the principal complex radical and the tower is constructed so that the desired branches of the radicals are achieved in $\F_m$; we identify $\delta_i$ with the complex radical function in \eqref{eq-raices-unidad} corresponding to the trivial root of unity.
In other words, we fix the map $\psi$ to be 
    \begin{equation}\label{eq:principal roots}
      \begin{array}{cccl}
        \psi:&\C^n&\rightarrow&\cV_\T\subseteq\C^{n+m} \\
         &\overline{t} & \mapsto&\left(\overline{t},\sqrt[e_1]{\alpha_1(\overline{t})},\ldots,\sqrt[e_m]{\alpha_m(\overline{t})}\right).
      \end{array}
      \end{equation}
      Thus, since the choice of the branches is already fixed by convention, $\overline\delta(\overline{t}_0)$ may be omitted from the construction. If one wants to work with other  branches of the radicals, one can either change the map $\psi$ or change the tower. Changing the tower will lead to an isomorphic tower (and isomorphic radical, incidence and tower variety). Such isomorphism is given by the change of variables $\Delta_i\mapsto \xi_i^{k}\cdot\Delta_i$ and some $0\leq k\leq e_i-1$ (see \eqref{eq-raices-unidad}). Example \ref{ex: distinct roots} illustrates these distinct methods of choosing the complex radicals.

  \begin{example}\label{ex: distinct roots}
      Consider the radical tower 
      \[
      \T:=\left[
        \F_0=\C(T)\subsetneq\F_1=\F_0[\Delta_1]/\langle \Delta_1^2-T\rangle
        \subsetneq\F_2=\F_1[\Delta_2]/\langle \Delta_2^2-T-\Delta_1\rangle
      \right].
      \]
      We identify $\Delta_1$ with $\delta_1=+\sqrt{T}$ and $\Delta_2$ with $\delta_2=+\sqrt{T+\sqrt{T}}$. We get the radical map 
      \[
      \begin{array}{cccl}
        \psi:&\C&\rightarrow&\cV_\T\subseteq\C^3 \\
         & t & \mapsto&(t,\sqrt{t},\sqrt{t+\sqrt{t}})
      \end{array}
      \]
      of Diagram \ref{fig:diagram1}. Now, a parametrization $\cP\in\F_2^3$ leads into the rational map $\cR:\cV_\T\dashrightarrow\cV_\cP$ of Diagram \ref{fig:diagram1}, and the radical parametrization $\cP:\C\dashrightarrow\cV_\cP$ is the composition $\cR\circ\psi$. The evaluation $\overline\delta(\overline{t}_0)$ represents the choice of radical map $\psi$. Nevertheless, such a choice can be seen from the construction of the tower. For instance, if we would have chosen the radical $-\sqrt{T}$ instead of $+\sqrt{T}$, the second radical would have been $\sqrt{T-\sqrt{T}}$. However, there is no element $g\in \F_2$ such that $g^2=T-\Delta_1$. Therefore, the radical map $t\mapsto\sqrt{t-\sqrt{t}}$ can not be seen as an element of $\F_m$. To do so, one can do two things. One can change the map $\psi$ by the map 
      \[
      \begin{array}{cccl}
        \psi':&\C&\rightarrow&\cV_\T\subseteq\C^3 \\
         & t & \mapsto&(t,-\sqrt{t},\sqrt{t-\sqrt{t}}).
      \end{array}
      \]
      A second option is to change the tower $\T$ by the tower
      \[
      \T':=\left[
        \F_0=\C(T)\subsetneq\F_1'=\F_0[\Delta_1']/\langle (\Delta_1')^2-T\rangle
        \subsetneq\F_2'=\F_1'[\Delta_2']/\langle (\Delta_2')^2-T+\Delta_1'\rangle
      \right]
      \]
      and identify directly $\Delta_1'$ with $\sqrt{T}$ and $\Delta_2'$ with $\sqrt{T-\sqrt{T}}$. Note that the towers $\T$ and $\T'$ are isomorphic via the change of variable $\Delta_1\rightarrow-\Delta_1'$. Therefore, the constructions of the radical and tower variety using $\T$ or $\T'$ are the same up to this scalar change of variable.
  \end{example}

 Summarizing, Diagram \ref{fig:diagram1} can be simplified as Diagram \ref{fig:diagram2}, which will be the diagram used in the rest of the paper. Observe that we have introduced a new map, namely
\begin{equation}\label{eq-gorro} \widehat{\pi}: \cV_\T\rightarrow \C^n; \,\,(\ot,  \overline{\delta})  \mapsto \ot
\end{equation}
Note that $\psi$ is injective and $\widehat{\pi}$ is generically $\ell:1$ where $\ell=e_1\cdots e_m$. Moreover 
 $\widehat{\pi}\circ \psi=\mathrm{id}$ but  $\psi \circ \widehat{\pi}\neq \mathrm{id}$ although $\psi(\ot_0)$ is in the fiber $\widehat{\pi}^{-1}(\ot_0)$.

\begin{diagram}[H]
\centering

\begin{tikzcd}
\cD\subset \mathbb{C}^{n+m+r+1} \arrow[d, "\begin{array}{l} \tilde{\pi} \\  \end{array}"]                                                &  &  &  \\
  \cA \subset \mathbb{C}^{n+m+r} \arrow[dd, "{\begin{array}{c} {\pi}   \end{array}}"'] \arrow[rrd, "\begin{array}{c} {\pi^*}\,\, (1:1)  \end{array}"] &  & & \\      &  & \cT\subset\mathbb{C}^{n+m} \arrow[lld, "\cR"'] \arrow[d, "\widehat{\pi}"{pos=0.5}', bend left=40] &
  \\
\cV_\mathscr{P}\subset \mathbb{C}^{r}   &  & \mathbb{C}^n\arrow[ll, "\cP"'] \arrow[u, "\psi"{pos=0.43}', bend left=40]  &                                       \end{tikzcd}

\caption{Improved commutative diagram of the radical parametrization}
\label{fig:diagram2}
\end{diagram}

We finish this section with some results on the tower variety and its equations.
Theorem \ref{thm: incidence var} shows that $\cA$ is irreducible, and hence, $\cV_\T=\overline{\pi^{*}(\cA)}$. However, this construction of $\cV_\T$ still depends on the radical parametrization $\cP$. The relevance of
Proposition \ref{prop- new E} 
is that it gives a construction of the tower variety that does not depend on the choice of $\cP$, only on the radical tower. In particular, we obtained the following result.

\begin{corollary}\label{co: no need of para}
For any two radical parametrizations $\cP\in \F_m^r$ and $\cP'\in\F_m^{r'}$, their tower varieties are equal.
\end{corollary}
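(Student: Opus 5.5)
The plan is to read the corollary off Proposition \ref{prop- new E}. That proposition characterizes the tower variety as $\cV_\T=\overline{\mathscr{Y}_\T\cap U_\T}$. Both $\mathscr{Y}_\T=\V(E_1,\ldots,E_m)$ and $U_\T$ are built only from the polynomials $E_i$ and the denominators $\alpha_{i,\mathrm{D}}(\oT)$, all of which are data of the tower $\T$. So the first step is to fix $\T$ with the $\alpha_i$ written in canonical form, so that the $E_i$ are well defined and do not depend on any choice of parametrization. The canonical representation of Definition \ref{def-canonical} is unique, which gives this directly.

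Next, let $\cP\in\F_m^r$ and $\cP'\in\F_m^{r'}$ be two radical parametrizations over the same tower. Write $\mathcal{B},\mathcal{B}'$ (equal to $\cA,\cA'$ by Theorem \ref{thm: incidence var}) for the associated incidence varieties, and $\cV_\T,\cV'_\T$ for their tower varieties, defined as $\overline{\pi^*(\mathcal{B})}$ and $\overline{\pi^*(\mathcal{B}')}$. Applying Proposition \ref{prop- new E} to each parametrization gives
\[\cV_\T=\overline{\mathscr{Y}_\T\cap U_\T}=\cV'_\T,\]
since the middle set is literally the same subset of $\C^{n+m}$ in both cases.

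The only point needing care is that the proof of Proposition \ref{prop- new E} used the parametrization only through the inclusion $\cV_\T\subseteq\mathscr{Y}_\T$. This inclusion holds for any $\cP$, because $\pi^*$ of the points of $\cD$ satisfies the equations $E_i$. The proof then used the irreducibility and dimension $n$ of $\overline{\mathscr{Y}_\T\cap U_\T}$, which involve only the tower via Lemma \ref{lemma- loc injection}. Combined with $\dim\cV_\T=n$ from Remark \ref{Remark-first}, these force equality for each parametrization separately. I do not expect a real obstacle. The one subtlety worth stating is the convention fixing the branches through $\psi$. Because $\cA$ is irreducible, this choice does not affect which component is taken, so no dependence on $\overline{\delta}(\overline{t}_0)$ remains.
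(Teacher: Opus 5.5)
Your proposal is correct and follows the paper's own route: the corollary is read off from Proposition~\ref{prop- new E}, because $\overline{\mathscr{Y}_\T\cap U_\T}$ is built only from the $E_i$ and the $\alpha_{i,\mathrm{D}}$ of the tower. One small correction to your third paragraph: $\cV_\T\subseteq\mathscr{Y}_\T$ together with irreducibility and dimension $n$ does not by itself force equality, since $\mathscr{Y}_\T$ can have other $n$-dimensional components (for example the line $T=\Delta_1=0$ in Example~\ref{ex: sat need}); the inclusion you actually need is $\cV_\T\subseteq\overline{\mathscr{Y}_\T\cap U_\T}$, which holds because $\pi^*(\tilde{\pi}(\cD))\subseteq\mathscr{Y}_\T\cap\{A\neq 0\}\subseteq\mathscr{Y}_\T\cap U_\T$ and $\cA=\overline{\tilde{\pi}(\cD)}$.
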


Additionally, Proposition \ref{prop- new E} provides a method to compute the ideal of the tower variety without the need of $\cA$. The ideal of the tower variety is obtained through the saturation 
\begin{equation}\label{eq:saturation ideal}
\E:\langle \alpha_{1,\mathrm{D}}(\oT)\cdots \alpha_{m,\mathrm{D}}(\oT)\rangle^{\infty}.
\end{equation}
This saturation is not needed for generic choice of $\alpha_i$ and $\E$ will be the ideal of the tower variety. However, in general, there is not a generic formula for the equations of this ideal since this saturation process is necessary in some cases as illustrated in the next example

\begin{example}\label{ex: sat need}
Consider the radical tower
      \[
      \T:=\left[
        \F_0=\C(T)\subsetneq\F_1=\F_0[\Delta_1]/\langle \Delta_1^3-T^5\rangle
        \subsetneq\F_2=\F_1[\Delta_2]/\langle T\Delta_2^2-\Delta_1^2\rangle
      \right].
      \]
      Here the ideal $\E$ is not prime and $\mathscr{Y}_\T$ has two irreducible components. The ideal of $\cV_\T$ is given by 
      \[
      \langle \Delta_1^3-T^5, T\Delta_2^2-\Delta_1^2\rangle :\langle T \rangle^{\infty}=
      \langle T\Delta_2^2-\Delta_1^2 , T^3\Delta_1- \Delta_2^4,T^4-\Delta_1\Delta_2^2\rangle.
      \]
      We deduce that $\cV_\T\subsetneq\mathscr{Y}_\T$ and the saturation process is required for computing the ideal of the tower variety. 
\end{example}

Example \ref{ex: sat need} does not only provide an instance in which $\E$ is not the ideal of the tower variety, but it also shows that the number of generators after this saturation might change. This suggests the difficulty of finding general formulas for the equations of $\cV_\T$. 
Nevertheless, the following proposition shows a situation in which we avoid this challenge.

\begin{corollary}\label{co: no denom}
    Let $\T$ be a radical tower  with $\alpha_{i,\mathrm{D}}(\oT)=1$ for every $1\leq i\leq m$. Then, $\E$ is prime and $\cV_\T=\V(\E)$.
\end{corollary}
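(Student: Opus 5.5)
When all $\alpha_{i,\mathrm{D}}(\oT)=1$, the plan is to specialize Lemma \ref{lemma- loc injection} and Proposition \ref{prop- new E} to the case where the localization is trivial. Here $\alpha_{\mathrm{D}}(\oT)=1$, so $\C[\oT,\oD]_{\alpha_{\mathrm{D}}(\oT)}=\C[\oT,\oD]$. Also $E_i=\widehat{E}_i=\Delta_i^{e_i}-\alpha_{i,\mathrm{N}}(\oT,\Delta_1,\ldots,\Delta_{i-1})$, and the open set $U_\T$ is all of $\C^{n+m}$.

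The first step is primality of $\E$. Lemma \ref{lemma- loc injection} now gives an injection
\[
\C[\oT,\oD]/\E\hookrightarrow \F_0[\oD]/\widehat{\E}=\F_m .
\]
Since $\F_m$ is a field, the quotient $\C[\oT,\oD]/\E$ is an integral domain, so $\E$ is prime. I would also verify directly that the lemma's proof still runs here: each $E_i$ is monic in $\Delta_i$ over $R_{i-1}=\C[\oT,\Delta_1,\ldots,\Delta_{i-1}]/\langle E_1,\ldots,E_{i-1}\rangle$, so division with remainder works in $R_{i-1}[\Delta_i]$, and the inductive injectivity argument goes through unchanged. A small side point is that $\E\neq\C[\oT,\oD]$, which is clear because $\C[\oT]$ embeds in the quotient.

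The second step identifies the variety. By Proposition \ref{prop- new E}, $\cV_\T=\overline{\mathscr{Y}_\T\cap U_\T}$. Because $U_\T=\C^{n+m}$, this gives $\cV_\T=\mathscr{Y}_\T=\V(\E)$. Combined with the first step, $\V(\E)$ is irreducible and $\E$ is its prime, hence radical, ideal; equivalently, the saturation \eqref{eq:saturation ideal} equals $\E$.

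I expect no real obstacle. The only point needing care is that the canonical-form convention and the hypothesis $\alpha_{i,\mathrm{D}}=1$ really make the $E_i$ monic with polynomial (not merely rational) coefficients in the previous variables, so that the localization step in Lemma \ref{lemma- loc injection} is vacuous.
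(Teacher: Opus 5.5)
Your proof is correct and follows essentially the paper's route: both specialize Proposition \ref{prop- new E} (and the injection of Lemma \ref{lemma- loc injection}) to the case where the localization is trivial. In that case $U_\T$ is the whole space and the saturation \eqref{eq:saturation ideal} collapses to $\E:\langle 1\rangle^\infty=\E$. The only difference is cosmetic: the paper deduces primality of $\E$ from its being the ideal of the irreducible variety $\cV_\T$, whereas you read it off directly from the injection $\C[\oT,\oD]/\E\hookrightarrow\F_m$.
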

\begin{proof}
By Proposition \ref{prop- new E}, the ideal of the tower variety is the saturation \eqref{eq:saturation ideal}. Since $\alpha_{i,\mathrm{D}}(\oT)=1$ for every $1\leq i\leq m$, the ideal of the tower variety is 
$ 
\E:\langle 1\rangle^{\infty} = \E.
$
We conclude that $\cV_\T=\V(\E)$ and since $\cV_T$ is irreducible, $\E$ is prime.
\end{proof}

Corollary \ref{co: no denom} shows a family of radical towers for which the ideal of the tower variety is $\E$. This provides a computational improvement as one can avoid the saturation process to obtain this ideal. As shown in Example \ref{ex: sat need}, in general, this saturation can not be avoided. We propose the following algorithm to slightly change the tower to 
a new isomorphic tower satisfying the assumptions of Corollary \ref{co: no denom}
and get a tower variety with simple ideal. The strategy is to build a new radical tower $\T'$ of the form 
\begin{equation}\label{eq:new tower}
\T':=\left[
\F_0\subsetneq\F_1':=\F_0(\gamma_1)\subsetneq\cdots \subsetneq\F_m':=\F_{m-1}(\gamma_m)
\right]
\end{equation}
with 
\[
\F_i'=\F_{i-1}'(\gamma_i)=\F_{i-1}'[\Gamma_i]/\langle E_i'\rangle \ \text{ and }\ E_i'=\Gamma_i^{e_i}-\alpha_i'(\oT,\Gamma_1,\ldots,\Gamma_{i-1})
\]
such that $\F_i'=\F_i$ for $1\leq i\leq m$.

\begin{algorithm}[H]
\floatname{algorithm}{Algorithm}
    \caption{Given a radical tower $\T$, it computes an isomorphic  tower $\T'$ as in \eqref{eq:new tower}.}\label{alg: tower}
    
    \begin{itemize}[leftmargin=1.2em, itemsep=2pt]
        \item \textbf{Input:} A radical tower $\T$.
        \item \textbf{Output:}  A radical tower $\T'$ as in \eqref{eq:new tower}  isomorphic to $\T$.
        \begin{enumerate}[leftmargin=1em, itemsep=2pt]
            \item  Set $\tilde{\alpha}_{1,\mathrm{D}}(\oT)=\alpha_{1,\mathrm{D}}(\oT)$ and  fix $\gamma_1=\tilde{\alpha}_{1,\mathrm{D}}(\overline{t})\delta_1$ and $\Gamma_1=\tilde{\alpha}_{1,\mathrm{D}}(\oT)\Delta_1$. Construct the first floor of $\T'$ as
            \[
            \F_1':=\F_0(\gamma_1)=\F_0[\Gamma_1]/\langle E_1'\rangle \ \text{ with } \ E_1'=\Gamma_1^{e_1}-\alpha_{1,\mathrm{D}}^{e_1-1}\alpha_{1,\mathrm{N}}.
            \]
            \item For $2\leq i\leq m$ do
            \begin{enumerate}
                \item  Apply the change of variables
                \[
                \Delta_{j}=\frac{\Gamma_j}{\tilde{\alpha}_{j,\mathrm{D}}(\oT)} \ \text{ for }1\leq j\leq i-1
                \]
                to $\alpha_i(\oT,\Delta_1,\ldots,\Delta_{i-1})$. Express this new element of $\F_{i-1}'$ in canonical form. Let $\tilde{\alpha}_{i,\mathrm{N}}(\oT,\Gamma_1,\ldots,\Gamma_{i-1})$ and $\tilde{\alpha}_{i,\mathrm{D}}(\oT)$ be the numerator and denominator of this expression in the canonical form. 
                \item Set $\gamma_i =\tilde{\alpha}_{i,\mathrm{D}}(\overline{t}) \delta_i $ and $\Gamma_i =\tilde{\alpha}_{i,\mathrm{D}}(\oT) \Delta_i $.  Construct the i--th  floor of $\T'$ as
            \[
            \F_i':=\F_{i-1}'(\gamma_i)=\F_{i-1}'[\Gamma_i]/\langle E_i'\rangle \ \text{ with } \ E_i'=\Gamma_i^{e_i}-\tilde{\alpha}_{i,\mathrm{D}}^{e_i-1}\tilde{\alpha}_{i,\mathrm{N}}.
            \]
            \end{enumerate}
        \item Return the radical tower $\T'$ whose floors are $\F_0,\F_1',\ldots, \F_m'$.
        \end{enumerate}
    \end{itemize}
\end{algorithm} 

 The following result shows that Algorithm \ref{alg: tower} returns the desired output.

\begin{theorem}\label{theo-change of var}
    The output $\T'$ obtained from Algorithm \ref{alg: tower} is a radical tower that satisfies 
    \begin{enumerate}

        \item \label{theo-change 1 } $\F_i'=\F_i$ for every $1\leq i\leq m$.
        \item\label{theo-change 2 } The ideal of the tower variety $\cV_{\T'}$ of $\T'$ is   $\langle E_1',\ldots,E_m'\rangle$.
        \item\label{theo-change 3 } The map 
        \begin{equation}\label{eq:some birat map}
        \begin{array}{ccl}
        \cV_{\T}&\longrightarrow&\cV_{\T'}\\
        (\overline{t},\overline{\delta})&\longmapsto&(\overline{t},\tilde{\alpha}_{1,\mathrm{D}}(\overline{t})\delta_1,\ldots, \tilde{\alpha}_{m,\mathrm{D}}(\overline{t})\delta_m)
        \end{array}
        \end{equation}
        is birational.
    \end{enumerate}
\end{theorem}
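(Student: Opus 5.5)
The plan is to prove the three items in order, by induction on the floor index $i$, with the key identity being the computation of $\gamma_i^{e_i}$.

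\textbf{Item \ref{theo-change 1 }.} Suppose $\F_{i-1}'=\F_{i-1}$. The substitution $\Delta_j=\Gamma_j/\tilde{\alpha}_{j,\mathrm{D}}(\oT)$, $j<i$, is just a rescaling of the generators by units of $\F_0$. Hence $\alpha_i$ is the same element of $\F_{i-1}=\F_{i-1}'$, now written in the canonical basis $\Base{\T',\F_{i-1}'}$ as $\tilde{\alpha}_{i,\mathrm{N}}/\tilde{\alpha}_{i,\mathrm{D}}$ with $\tilde{\alpha}_{i,\mathrm{D}}\in\C[\oT]$. Then
\[
\gamma_i^{e_i}=\tilde{\alpha}_{i,\mathrm{D}}^{e_i}\,\delta_i^{e_i}=\tilde{\alpha}_{i,\mathrm{D}}^{e_i}\,\alpha_i=\tilde{\alpha}_{i,\mathrm{D}}^{e_i-1}\tilde{\alpha}_{i,\mathrm{N}}=:\alpha_i'.
\]
Since $\tilde{\alpha}_{i,\mathrm{D}}\in\F_0^{*}$, we get $\F_{i-1}'(\gamma_i)=\F_{i-1}(\delta_i)=\F_i$. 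The degree is $[\F_i:\F_{i-1}]=e_i$, so $E_i'=\Gamma_i^{e_i}-\alpha_i'$ is irreducible over $\F_{i-1}'$ (it is the minimal polynomial of $\gamma_i$). This gives $\F_i'=\F_{i-1}'[\Gamma_i]/\langle E_i'\rangle$, so $\T'$ is a radical tower of the form \eqref{eq:new tower}.

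\textbf{Item \ref{theo-change 2 }.} Observe that $\alpha_i'=\tilde{\alpha}_{i,\mathrm{D}}^{e_i-1}\tilde{\alpha}_{i,\mathrm{N}}$ is a polynomial in $\C[\oT,\Gamma_1,\ldots,\Gamma_{i-1}]$. The coordinates of $\tilde{\alpha}_{i,\mathrm{N}}$ in the basis are polynomials in $\oT$, and multiplying by the polynomial $\tilde{\alpha}_{i,\mathrm{D}}^{e_i-1}$ keeps it canonical, since $\tilde{\alpha}_{i,\mathrm{D}}$ lies in $\C[\oT]$. So the canonical denominators of $\T'$ all equal $1$, and Corollary \ref{co: no denom} applies directly: $\langle E_1',\ldots,E_m'\rangle$ is prime and equals the ideal of $\cV_{\T'}$.

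\textbf{Item \ref{theo-change 3 }.} The map \eqref{eq:some birat map} is polynomial, and it has the rational inverse $(\ot,\overline{\gamma})\mapsto(\ot,\gamma_1/\tilde{\alpha}_{1,\mathrm{D}}(\ot),\ldots,\gamma_m/\tilde{\alpha}_{m,\mathrm{D}}(\ot))$. Both are defined on the open set where $\prod_j\tilde{\alpha}_{j,\mathrm{D}}(\ot)\neq0$, which is dense in each variety. Density holds because, as in the end of the proof of Proposition \ref{prop- new E}, neither $\cV_\T$ nor $\cV_{\T'}$ lies in a hypersurface $\V(h(\oT))$, their function fields containing $\F_0$.

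It remains to check that the map sends $\cV_\T$ into $\cV_{\T'}$. At the level of function fields, Proposition \ref{prop- new E} shows that the function field of $\cV_\T$ is $\F_m$ (with $\Delta_i\mapsto\delta_i$), and similarly that of $\cV_{\T'}$ is $\F_m'=\F_m$. The pullback of the map sends $\Gamma_i\mapsto\tilde{\alpha}_{i,\mathrm{D}}\Delta_i$. By the identity above this kills each $E_i'$ in $\F_m$, and its image generates $\F_m$. Hence the map is dominant into $\cV_{\T'}$ and induces an isomorphism of function fields, which gives birationality.

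I expect the main obstacle to be the canonical-form bookkeeping in step (a). One must make sure that re-expressing $\alpha_i$ after the substitution really yields a denominator depending only on $\oT$. It does: the substitution introduces only powers of the $\tilde{\alpha}_{j,\mathrm{D}}(\oT)$ in the denominators, and reduction of $\Gamma_j^{e_j}$ uses the polynomial $\alpha_j'$, so no new denominators appear.
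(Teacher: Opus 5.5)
Your proposal is correct and follows essentially the same route as the paper. You verify $\gamma_i^{e_i}=\tilde{\alpha}_{i,\mathrm{D}}^{e_i-1}\tilde{\alpha}_{i,\mathrm{N}}$, get $\F_i'=\F_i$ by induction, invoke Corollary \ref{co: no denom} for item 2, and obtain birationality from the explicit inverse on the dense open set $\prod_j\tilde{\alpha}_{j,\mathrm{D}}(\ot)\neq 0$; the differences are cosmetic: you get irreducibility of $E_i'$ from the degree count $[\F_i:\F_{i-1}]=e_i$ rather than from the isomorphism $\F_{i-1}[\Delta_i]\cong\F_{i-1}'[\Gamma_i]$ sending $\widehat{E}_i$ to a unit multiple of $E_i'$, and you spell out why the new denominators are $1$ and why the map lands in $\cV_{\T'}$, which the paper leaves implicit.
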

\begin{proof}
First, we show that $\T'$ is a radical tower. To do so, we check that $\overline{\gamma}$ satisfies the equation $E_i'=0$ for every $i$. Indeed, we have that 
\[
E_i'(\oT,\gamma_1,\ldots,\gamma_{i})=\gamma_i^{e_i}-\tilde{\alpha}_{i,\mathrm{D}}^{e_i-1}\tilde{\alpha}_{i,\mathrm{N}}=
\left(\delta_i\tilde{\alpha}_{i,\mathrm{D}}\right)^{e_i}-\tilde{\alpha}_{i,\mathrm{D}}^{e_i-1}\tilde{\alpha}_{i,\mathrm{N}}
=
\tilde{\alpha}_{i,\mathrm{D}}^{e_i-1}\left(\tilde{\alpha}_{i,\mathrm{D}}\delta_i^{e_i}-\tilde{\alpha}_{i,\mathrm{N}}\right)
\]
Undoing the change of variables applied in Step 2 (a) of Algorithm \ref{alg: tower}, we get that 
\[
\tilde{\alpha}_{i,\mathrm{D}}(\oT)\delta_i^{e_i}-\tilde{\alpha}_{i,\mathrm{N}}(\oT,\gamma_1,\ldots,\gamma_{i-1})=E_i(\oT,\delta_1,\ldots,\delta_{i})=0.
\]
Next, we need to check that  $E_i'$ is irreducible in $\F_{i-1}'[\Gamma_i]$. To do so, note that the change of variables applied in Step 2 (a) of Algorithm \ref{alg: tower} leads to an isomorphism between
$ 
\F_{i-1}[\Delta_i] \ \text{ and } \F_{i-1}'[\Gamma_i].$ 
that sends $\widehat{E}_i$ to $E_i'$. So the irreducibility of $E_i'$ follows from the irreducibility of $\widehat{E}_i$. Therefore, $\T'$ is a radical tower.

We show 
Theorem \ref{theo-change of var} \eqref{theo-change 1 } by induction on $0\leq i\leq m$. For $i=0$, the towers $\T$ and $\T'$ have the same base field $\F_0$. Now, by induction we have that 
\[
\F_i'=\F_{i-1}'(\gamma_i)=\F_{i-1}(\gamma_i)=\F_{i-1}(\tilde{\alpha}_{i,\mathrm{D}}(\oT)\cdot\delta_i)=\F_{i-1}(\delta_i)=\F_i.
\]
Theorem \ref{theo-change of var} \eqref{theo-change 2 } follows from Corollary \ref{co: no denom}. Finally, to show \eqref{theo-change 3 } we consider the following rational map 
\begin{equation}\label{eq:other birat map}
 \begin{array}{ccc}
      \cV_{\T'}&\dashrightarrow &\cV_{\T}\\
        \displaystyle(\overline{t},\overline{\gamma})&\longmapsto&\displaystyle\left(\overline{t},\frac{\gamma_1}{\tilde{\alpha}_{1,\mathrm{D}}(\overline{t})},\ldots, \frac{\gamma_m}{\tilde{\alpha}_{m,\mathrm{D}}(\overline{t})}\right).
 \end{array}
\end{equation}
The domain of definition of this rational map is the Zariski-open subset $U':=\{\tilde{\alpha}_{1,\mathrm{D}}(\oT)\cdots \tilde{\alpha}_{m,\mathrm{D}}(\oT)\neq 0\}$.  Since the fraction field of $\cV_{\T'}$, which is $\F_m'=\F_m$, contains $\F_0$ and $U'$ is defined by polynomials on $\oT$, we deduce that $U'\cap \cV_{\T'}$ is nonempty. 
In particular, \eqref{eq:other birat map} is a well defined map on a dense Zariski-open subset of $\cV_{\T'}$.
Then, the birationality of \eqref{eq:some birat map} follows from the fact that \eqref{eq:some birat map} and \eqref{eq:other birat map} are mutually inverse  in $U'$.
\end{proof}

\begin{example}
    Consider the radical tower
      \[
      \T:=\left[\begin{array}{c}
        \F_0=\C(T)\subsetneq\F_1=\F_0[\Delta_1]/\langle \Delta_1^3-T^5\rangle
        \subsetneq\F_2=\F_1[\Delta_2]/\langle T\Delta_2^2-\Delta_1^2\rangle \\
       \subsetneq \F_3=\F_2[\Delta_3]/ \langle T\Delta_3^2-\Delta_2+T\rangle
        \end{array}
      \right].
      \]
      The ideal of the tower variety of $\T$ has $11$ generators. Applying Algorithm \ref{alg: tower} will result in a simpler tower variety. Applying Step 1 to build the first floor of the new tower we  get 
      \[
      \F_1'=\F_0(\gamma_1)=\F_0[\Gamma_1]/\langle \Gamma_1^3-T^5\rangle \text{ with } \gamma_1=\delta_1 \text{ and } \Gamma_1=\Delta_1.
      \]
      For the second floor, we have that $\tilde{\alpha}_{2,\mathrm{D}}(T)=\alpha_{2,\mathrm{D}}(T)$ and we get that 
      \[
        \F_2'=\F_1'(\gamma_2)=\F_1'[\Gamma_2]/\langle \Gamma_2^2-T\,\Gamma_1^2\rangle \text{ with } \gamma_2=T\,\delta_2 \text{ and } \Gamma_2=T\,\Delta_2.
      \]
      For the third floor, we apply the change of variables of Step 2 (a) to get
      \[
      \alpha_{3}(T,\Gamma_1,\Gamma_2 T^{-1})=\frac{\Gamma_2-T^2}{T^2}.
      \]
      Therefore, $\tilde{\alpha}_{3,\mathrm{D}}(T)= T^2$ and we get that 
      \[
       \F_3'=\F_2'(\gamma_3)=\F_2'[\Gamma_3]/\langle \Gamma_3^2-T^2(\Gamma_2-T^2)\rangle \text{ with } \gamma_3=T^2\,\delta_3 \text{ and } \Gamma_3=T^2\,\Delta_3.
      \]
      For this new radical tower $\T'=[\F_0\subsetneq\F_1'\subsetneq\F_2'\subsetneq\F_3']$, the tower variety is defined by the ideal 
      \[
     \left \langle
      \Gamma_1^3-T^5,
\Gamma_2^2-T\,\Gamma_1^2,
        \Gamma_3^2-T^2(\Gamma_2-T^2)
      \right\rangle.
      \]
\end{example}

\section{Universal Radical Parametrizations}\label{sec:universal-param}
  In this section, we introduce the notion of universal parametrization, establish its main properties, and illustrate how this notion can be used to strengthen the results obtained in some applications.

\subsection{Notion and Properties}

The construction of the tower variety introduced in \cite{RSV2017}, or in Section \ref{sec:preliminares}, is  a priori dependent on the chosen parametrization. Nevertheless, the variables involved in the equations of the tower variety  only depend on the chosen tower and not on the parametrization (see Corollary \ref{co: no need of para}). This motivates the following concept.

\begin{definition}\label{def-uni}
Let $\T$ be as in \eqref{tower}. 
We define the \textsf{universal radical parametrization}  of $\T$,  
   as the radical  parametrization $(\T,\cPU)$ where
    \[
    \cPU(\oT)=(\oT,\Delta_1(\oT),\ldots,\Delta_m(\oT)),
    \]
    and $\oT=(T_1,\ldots,T_n)$.
    \end{definition}

\begin{remark} \
\begin{enumerate}
\item  In Def. \ref{def-uni},  $\Delta_i$ represents the analytic radical map
    from $\C^n$ to $\C$ given by the 
    principal $e_i$-th root  $\ot \mapsto\delta_i(\ot):=\sqrt[e_i]{\alpha_i(\ot)}$. In particular, we can identify the universal radical parametrization $\cPU$ with the map $\psi$ in Diagram \ref{fig:diagram2}. In Section \ref{sec:preliminares}, the construction of the map $\psi$ depends on a given parametrization. By Theorem \ref{thm: incidence var}, $\psi$ should depend only on the radical tower.
    The universal radical parametrization $\cPU$ is the redefinition of the map $\psi$ arising intrinsically from the tower.
\item Note that $\cPU\in \F^{n+m}_{m}$, i.e. $r=n+m$, and its Jacobian w.r.t. $\oT$ has generic rank $n$.
\item  Observe that the polynomials in \eqref{eq-pols}, corresponding to $\cPU$, are  
\begin{equation}\label{eq-pols-U}
\left.
\begin{array}{llll} 
    E_i(\oT,\oD)& := & \alpha_{i,\mathrm{D}}(\oT)\Delta_i^{e_i} - \alpha_{i,\mathrm{N}}(\oT,\oD),& i\in \{1,\ldots,m\}   \\
    G_{i,\cU}(X_i,\oT)& := &X_i -T_i,& i\in \{1,\ldots,n\} \\
    G_{j,\cU}(X_j,\oD)& := &X_{n+j} -\Delta_j,&  j\in \{1,\ldots,m\} \\
    G_W(W,\oT)& := &W\cdot \mathrm{lcm}(\alpha_{1,\mathrm{D}},\ldots,\alpha_{m,\mathrm{D}})-1 &
    \end{array} \right\}
\end{equation}
where $\alpha_{i,\mathrm{N}},\alpha_{i,\mathrm{D}}$ are as in \eqref{eq-hE}.
\end{enumerate}
\end{remark}

 In this situation, the following theorem holds.

\begin{theorem}\label{theo: univ para var}
The varieties  $\mathscr{A}_{\cPU}$, $\mathscr{V}_{\cPU}$ and $\cT$ are isomorphic.
\end{theorem}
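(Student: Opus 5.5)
The plan is to show that, for the universal parametrization, the incidence variety is the graph of an embedding. Concretely, we identify $\mathscr{A}_{\cPU}\subset\C^{n+m+(n+m)}$ with the graph of the identity of $\C^{n+m}$ restricted to $\cT$. The isomorphism $\mathscr{A}_{\cPU}\cong\cT$ then comes from $\pi^*$, and $\mathscr{A}_{\cPU}\cong\mathscr{V}_{\cPU}$ from $\pi$. In particular $\mathscr{V}_{\cPU}=\cT$ as subsets of $\C^{n+m}$ under the coordinate identification $X_i\leftrightarrow T_i$ and $X_{n+j}\leftrightarrow\Delta_j$.

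First, I use the polynomials \eqref{eq-pols-U}. The equations $G_{i,\cU}$ and $G_{j,\cU}$ are $X_i=T_i$ and $X_{n+j}=\Delta_j$, with no denominators. They are therefore closed conditions that hold everywhere, not only on $U_{\cPU}$. Hence
\[
\mathscr{Y}_{\cPU}=\{(\ot,\od,\ox): (\ot,\od)\in\mathscr{Y}_\T,\ \ox=(\ot,\od)\},
\]
and $U_{\cPU}=U_\T$, since the only denominators are the $\alpha_{i,\mathrm{D}}$. By Theorem \ref{thm: incidence var} and its proof, $\mathscr{A}_{\cPU}=\overline{\mathscr{Y}_{\cPU}\cap U_\T}$.

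Next, consider the linear embedding $\Phi:\C^{n+m}\to\C^{2(n+m)}$, $(\ot,\od)\mapsto(\ot,\od,\ot,\od)$. Its image is the closed linear subspace $L=\{\ox=(\ot,\od)\}$, and $\Phi$ is an isomorphism onto $L$ with inverse $\pi^*|_L$. Since $\Phi$ is a closed embedding, it commutes with Zariski closures and intersections with $U_\T$. Applying Proposition \ref{prop- new E},
\[
\mathscr{A}_{\cPU}=\overline{\Phi(\mathscr{Y}_\T\cap U_\T)}=\Phi\bigl(\overline{\mathscr{Y}_\T\cap U_\T}\bigr)=\Phi(\cT).
\]
Thus $\pi^*:\mathscr{A}_{\cPU}\to\cT$ is a genuine isomorphism with inverse $\Phi|_{\cT}$, and it is not merely birational. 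Similarly, $\pi|_L$ is the isomorphism $L\to\C^{n+m}$ onto the $\ox$-coordinates, with inverse $\ox\mapsto(\ox,\ox)$. Therefore $\pi(\mathscr{A}_{\cPU})$ is already closed and equals $\cT$ under the identification. So $\mathscr{V}_{\cPU}=\overline{\pi(\mathscr{A}_{\cPU})}\cong\cT\cong\mathscr{A}_{\cPU}$, and the map $\cR$ becomes the identity.

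The main point to check carefully is the passage from $\mathscr{Y}_{\cPU}\cap U_\T$ to its closure. One has to be sure that the closure is taken inside $L$, which holds because $L$ is closed. One also needs that the $W$-variable projection introduces nothing extra; this holds because $G_W$ only localizes at $\mathrm{lcm}(\alpha_{i,\mathrm{D}})$, which has the same zero set as $\alpha_{\mathrm{D}}$. A minor remark is that $r=n+m>n$ and that the Jacobian has rank $n$, so $\cPU$ is a legitimate radical parametrization and the earlier results apply to it.
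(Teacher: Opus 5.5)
Your proposal is correct and takes the same approach as the paper, which also shows that $\pi^*$ and $\pi$ are isomorphisms by exhibiting the inverses $(\ot,\od)\mapsto(\ot,\od,\ot,\od)$ and $\ox\mapsto(\ox,\ox)$. You also fill in what the paper calls ``direct to see'': using $U_{\cPU}=U_\T$, the closed embedding $\Phi$ and Proposition~\ref{prop- new E}, you show that $\mathscr{A}_{\cPU}=\Phi(\cT)$ and that $\pi(\mathscr{A}_{\cPU})$ is already closed.
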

\begin{proof}
We reason directly with Diagram \ref{fig:diagram2}, specialized to the case of $\cPU$, and we analyze
the maps $\pi$ and $\pi^\star$.  It is direct to see that both maps are indeed isomorphisms whose inverses are
\[ \begin{array}{cccc}
\pi^*: & \mathscr{A}_{\cPU} & \longrightarrow & \cT, \\
 & (\ot, \od, \ox) & \longmapsto & (\ot,\od).
\end{array} \,\,\, \begin{array}{cccc}
(\pi^*)^{-1}: &  \cT  & \longrightarrow &\mathscr{A}_{\cPU}   \\
 & (\ot,\od)& \mapsto &  (\ot, \od, (\ot,\od)).
\end{array} \]
\[ \begin{array}{cccc}
\pi: & \mathscr{A}_{\cPU} & \longrightarrow & \mathscr{V}_{\cPU}, \\
 & (\ot, \od, \ox) & \mapsto & \ox.
\end{array} \]
\[ \begin{array}{cccc}
\pi^{-1}: &  \mathscr{V}_{\cPU}  & \longrightarrow &\mathscr{A}_{\cPU}   \\
 & \ox:=(x_1,\ldots,x_n,x_{n+1},\ldots,x_{n+m})  & \longmapsto &  ((x_1,\ldots,x_n), (x_{n+1},\ldots,x_{n+m}), \ox).
\end{array} 
\]
   So, the statement holds.
\end{proof}

  In \cite{RSV2017}, the degree of   $\pi$ is called the \textsf{tracing index of the parametrization}. Taking into  account Diagram \ref{fig:diagram2}, we have that $\pi$ is 1:1 if and only if $\cR$ is 1:1.  Therefore, we get the following corollary.

 \begin{corollary}\label{cor:tracing index}
The tracing index of every universal parametrization is one.
 \end{corollary}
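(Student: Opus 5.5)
The plan is to read the tracing index off Diagram \ref{fig:diagram2} specialized to $\cPU$, and then apply Theorem \ref{theo: univ para var}. By definition, the tracing index of $(\T,\cPU)$ is the degree of the dominant map $\pi\colon\mathscr{A}_{\cPU}\to\mathscr{V}_{\cPU}$, i.e.\ the number of points in a generic fibre. This is well defined because Theorem \ref{thm: incidence var} makes $\mathscr{A}_{\cPU}$ irreducible, and Remark \ref{Remark-first} gives $\dim \mathscr{A}_{\cPU}=\dim\mathscr{V}_{\cPU}=n$.

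The key step is to reuse the proof of Theorem \ref{theo: univ para var}. There $\pi$ is shown to be an isomorphism onto $\mathscr{V}_{\cPU}$, with inverse
\[
\ox=(x_1,\ldots,x_{n+m})\mapsto\bigl((x_1,\ldots,x_n),(x_{n+1},\ldots,x_{n+m}),\ox\bigr).
\]
The reason is that the equations $G_{i,\cU}$ and $G_{j,\cU}$ in \eqref{eq-pols-U} force $\ox=(\ot,\od)$ on $\mathscr{Y}_{\cPU}$, hence on its closure $\mathscr{A}_{\cPU}$. So every fibre of $\pi$ over a point of $\pi(\mathscr{A}_{\cPU})$ is a single point, and $\pi$ is birational, in particular of degree one.

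To phrase the conclusion in terms of $\cR$, as the text preceding the corollary suggests, I would use that $\pi^*$ is birational (in fact an isomorphism here, by Theorem \ref{theo: univ para var}). Hence $\cR=\pi\circ(\pi^*)^{-1}$ has the same degree as $\pi$. For $\cPU$, $\cR$ is just the identity-like map $(\ot,\od)\mapsto(\ot,\od)$ from $\cT$ onto $\mathscr{V}_{\cPU}$, which is visibly 1:1.

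The only point needing care is that the equation $\ox=(\ot,\od)$ holds on all of $\mathscr{A}_{\cPU}$ and not merely on the open set $U_{\cPU}$. This follows because $\mathscr{A}_{\cPU}$ is the Zariski closure of a set on which these polynomial identities hold, so they persist on the closure. With that settled, the corollary follows immediately.
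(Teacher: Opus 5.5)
Your proposal is correct and follows the paper's route. The tracing index is the degree of $\pi$, which Theorem \ref{theo: univ para var} shows is an isomorphism for $\cPU$ (equivalently $\cR$ is 1:1, since $\pi^*$ is birational); your check that the linear relations $\ox=(\ot,\od)$ persist on the closure $\mathscr{A}_{\cPU}$ just makes explicit a step that theorem's proof calls direct.
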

 
\begin{remark}
By Diagram \ref{fig:diagram2}, we have that 
$\cR=\pi\circ (\pi^\star)^{-1}: \cT \rightarrow \mathscr{V}_{\cPU}; (\ot,\od) \mapsto (\ot,\od).$ 
\end{remark}

\begin{remark}\label{rmk: simple diag}
    Associated to every parametrization $(\T,\cP)$ we have a universal parametrization $(\T,\cPU)$. We obtain   the following commutative diagram.  
\[
\begin{tikzcd}
& \cV_\T \simeq\cV_{\cP_\cU} \arrow[ld, "\cR"'] \\
\cV_\cP & \C^n \arrow[u, "\cP_\cU"'] \arrow[l, "\cP"]
\end{tikzcd}
\] This diagram appears hidden in Diagram \ref{fig:diagram2} by identifying the map $\psi$ with $\cPU.$
\end{remark}

\subsection{Revisiting Some Applications}

In some applications, we   face  a problem whose input involves radicals, possibly nested, of polynomials, and the goal is to determine a rational change of variables that transforms the original problem into an equivalent one involving no radicals. To this end, a radical parametrization is associated with the input, and the problem is addressed by exploiting the relationship between its tower variety and its radical variety. We know that if the tower variety is unirational, then so is the radical variety. However, the converse implication does not hold in general (see Example \ref{ex: minimal tower better}). Nevertheless, it suffices to observe that the parametrization usually associated with the problem turns to be universal and, consequently, the two varieties are isomorphic (see Theorem \ref{theo: univ para var}). This property allows us to strengthen the results obtained in these applications till now.

In \cite{CSSV2021,FS2024} different problems concerning differential equations are considered.  In \cite{CSSV2021} the authors focus on ODEs and PDEs  whose coefficients involve,  possibly nested, radicals of polynomials in the independent variables. In \cite{FS2024}, the same type of differential equations are considered, but radicals involving the dependent variables and their derivatives are also allowed. The problem is then to decide whether there exists a rational change of variables that transforms the given differential equation into one whose coefficients are rational functions and, if so, to determine such a change of variables.

We illustrate this observation in the case of \cite{CSSV2021} for the case of ODEs. The reasoning for improving the results in the other applications is analogous.  We consider a radical tower. 
Let $x$ be an independent variable, then 
\begin{equation}
\T:=\left[\mathbb{F}_0 = \mathbb{C}(x) \subsetneq \cdots \subsetneq \mathbb{F}_{m-1} \subsetneq \mathbb{F}_m\right]
\end{equation}
where $\F_i,\delta_i,\alpha_i,e_i$ are as in \eqref{tower},\eqref{eq-delta}.  Let $F\in \F_m[w_0,\ldots, w_s]$. We consider the differential equation
\begin{equation}\label{eq-F}
F(x,y,y',\ldots,y^{(s)})=0,
\end{equation} 
where $y$ is a function on $x$.  The problem is to find, if it exists,
a rational change of variable of the form 
$x = r(z)$ 
such that the new equation
\begin{equation}\label{eq-G}
G\left(r(z),Y(z),\dots,Y^{(s)}(z)\right) = 0, \quad \text{where } Y(z) = y(r(z)),
\end{equation}
is algebraic; that is, all the coefficients are rational functions. Moreover, in the affirmative case, to control how the solutions of  \eqref{eq-F} and \eqref{eq-G} relate. 

In \cite{CSSV2021}, the authors introduce the radical parametrization $(\T,\cP:=(x,\overline{a}(x)))$, where  $\overline{a}(x) \in \mathbb{F}_m^{\ell}$
is the tuple of non-rational (function) coefficients of $F(x,\overline{w})$ w.r.t. $\overline{w}$,
taken in any order. Then, they provide the solution to the problem in case $\cV_\T$ is rational, leaving open  the case where   the tracing index is not 1. Using the results in this section we  go a step further. More precisely, we have the following result.
\begin{remark}\label{rem-edo}
Let $\overline{a}=(a_1,\dots,a_\ell)$  be  such that, 
 \begin{equation}\label{tower2}
\T^*:=\left[\C(x)=:\F_0^*\subsetneq \F_1^*=\F_0^*(a_1)\subsetneq\F_2^*=\F_1^*(a_2)\subsetneq \cdots \subsetneq \F_\ell^*=\F_{\ell-1}^*(a_\ell) \right], 
\end{equation} 
with $a_{j}^{e_j}\in  \F^{*}_{j-1}$, and $e_j\in \mathbb{Z}_{>1}$, is a radical tower.
 Then $(\T^*,\cP=(x,\overline{a}(x)))$ is the universal parametrization and, hence, $\mathscr V_{\mathcal P}$ and $ \mathscr V_{\mathbb T^*}$ are isomorphic (see Theorem \ref{theo: univ para var}). Thus, \eqref{eq-G}  can be achieved if and only if $\mathscr V_{\mathbb T^*}$ is rational. 
\end{remark}
More generally, the conclusion of Remark \ref{rem-edo} holds whenever   $(\mathbb T^*,(x,\overline a))$
is strong (Definition \ref{def-strong}), since then $\mathscr V_{\mathscr P}$ and
$\mathscr V_{\mathbb T^*}$ are birational (Corollary \ref{cor-fuerte}).

\section{Minimal Radical Parametrizations}\label{sec:subtower}

 Throughout this section, we will work with the tower $\T$ introduced in \eqref{tower}  as the \textit{main} tower, and we will study subtowers of $\T$.  In this section, we will always assume that the elements of $\F_m$ are represented in canonical form, including $\alpha_i$ and $p_j$ (see Def. \ref{def-canonical}).

\subsection{Radical Subtowers}

First, we introduce the notion of radical subtower of $\T$.

\begin{definition}
Let 
$(i_1,\ldots,i_k)$  be a sequence of $k$  distinct elements of 
$\{1,\ldots,m\}$. A \textsf{radical subtower of $\T$ (associated to $(i_1,\ldots,i_k)$)}  is a radical tower of the form 
\begin{equation}\label{eq-subtower}
\T_{(i_1,\ldots,i_k)}:=[\F_0\subsetneq\F_{i_1}'\subsetneq \F_{i_2}'\subsetneq \cdots\subsetneq\F_{i_k}']
\end{equation}
such that $\F_{i_j}'=\F_{i_{j-1}}'(\delta_{i_j})$ and $\alpha_{i_j}\in \F_{i_{j-1}}'$ for  $1\leq j\leq k$, where $\delta_{i_j}\in \{\delta_1,\ldots,\delta_m\}$, and $\alpha_{i_j}\in \{\alpha_1,\ldots,\alpha_m\}$ with $\delta_i,\alpha_i$  as in \eqref{eq-delta}, and $\F_{i_0}':=\F_0$.  In addition, we define $\T_{\emptyset}$ as 
\[
\T_{\emptyset}:=[\F_0]. \]
We write $\T_{(i_1,\ldots,i_k)}\subseteq \T$ to express that  $\T_{(i_1,\ldots,i_k)}$  is a radical subtower of $\T$.
\end{definition}

In other words, a radical subtower $\T_{(i_1,\ldots,i_k)}$  of $\T$ is a radical tower in which in each step of the tower we add one of the radical elements $\delta_{i_1},\ldots,\delta_{i_k}$ of the initial tower. In terms of the fields descriptions, $\T_{(i_1,\ldots,i_k)}$ is a radical subtower if
\[
\F_{i_j}'= \F_{i_{j-1}}'[\Delta_{i_j}]/\langle \Delta_{i_j}^{e_{i_j}}-\alpha_{i_j}\rangle \ \text{ with } \alpha_{i_j}\in \F_{i_{j-1}}'\ \text{ for } \ 1\leq j\leq k.
\]

 \begin{example}
    Consider the radical tower
    \[
    \T:=\left[ \F_0=\C(T)\subsetneq \F_1=\F_0\left(\sqrt{T}\right)\subsetneq  \F_2=\F_1\left(\sqrt[3]{T}\right)\subsetneq  \F_3=\F_2\left(\sqrt{T+\sqrt{T}}\right) \right].
    \]
    Then, the tower $\T_{(1,2)}:=[\F_0\subsetneq \F_1\subsetneq  \F_2]$ is a radical subtower of $\T$. Another example is
    \[
    \T_{(1,3)}:= \left[\F_0\subsetneq \F_1\subsetneq \F_2':=\F_1\left(\sqrt{T+\sqrt{T}}\right) \right]\]
    which is a radical subtower of $\T$ since each floor of $ \T_{(1,3)}$ is a radical field extension and it is obtained by 
    adding one of the $\delta_1,\delta_3$ of the initial tower.

    Finally, a nonexample is the case of the radical tower $\F_0\subsetneq\F_0(\sqrt[6]{T})$. This is a radical extension satisfying that $\F_0(\sqrt[6]{T})=\F_2$, but $\sqrt[6]{T}$ is not one of the $\delta_i$ of the initial tower.
\end{example}

\begin{example}\label{rmk:order}
We note that the sequences  associated to the radical subtower  might not follow the order given by the initial tower. For instance, consider the tower
\[
\T:= \left[ \F_0:=\C(T)\subsetneq\F_1:=\F_0(\sqrt{T})\subsetneq\F_2:=\F_1(\sqrt{T^2+T+1})\subsetneq\F_3:=\F_2(\sqrt[3]{\sqrt{T}+1}) \right].
\]
We have the subtowers of $\T$
\[
\begin{array}{cll}
\T_{(1,2)}:=& \left[\F_0\subsetneq\F_1\subsetneq\F_2\right] & \subseteq \T, 
\\
\T_{(2,1)}:=&\left[\F_0\subsetneq\F_1':=\F_0(\sqrt{T^2+T+1})\subsetneq\F_2':=\F_1'(\sqrt{T})\right] & \subseteq \T, 
\end{array}
\]
which do not follow the same order. But  not every sequence or order leads to a radical subtower. For instance, the $\T_{(3,1)} \not\subseteq\T$   since to build the radical $\sqrt[3]{\sqrt{T}+1}$, one needs $\sqrt{T}$. On the contrary, changing the order we get that $\T_{(1,3)}\subseteq\T$.
\end{example}

The following example shows the importance of choosing the canonical representation.

 \begin{example}\label{ex: representatives}
    Consider the radical tower $\T:=[\F_0=\C(T)\subsetneq\F_1\subsetneq\F_2\subsetneq\F_3]$ given by the polynomials
    \begin{equation}\label{eq-1}
    E_1=T\Delta_1^2-1,\ E_2=T\Delta_2^2-T-1,\ \text{ and } E_3=\Delta_3^3-\Delta_2^2.
    \end{equation}
    The ideal of the tower variety $\cV_\T$ of $\T$ is $\mathcal{E}=\langle E_1,E_2,E_3\rangle$. Representing the polynomials  in the canonical form we have 
  \begin{equation}\label{eq-2}
    E_1=T\Delta_1^2-1,\ E_2=T\Delta_2^2-T-1,\ \text{ and } E_3=\Delta_3^3-\Delta_2^2=T\Delta_3^3-T-1.
   \end{equation}
   Let us analyze whether $\T_{(3)}\subseteq \T$. From \eqref{eq-1}, the answer might not be immediate. However, from \eqref{eq-2} one has that
   \[
    \Delta_3^3=\frac{T+1}{T},
    \]   
   and the answer is clearly positive. 
   
\end{example}

Example \ref{rmk:order} motivates the next definition.

\begin{definition}
Let $\T_{(i_1,\ldots,i_k)},\T_{(j_1,\ldots,j_k)}\subseteq\T$. We say that  $\T_{(i_1,\ldots,i_k)}$ and $\T_{(j_1,\ldots,j_k)}$ are \textsf{equivalent} if  $\{i_1,\ldots,i_k\}=\{j_1,\ldots,j_k\}$ and we write $\T_{(i_1,\ldots,i_k)}\equiv \T_{(j_1,\ldots,j_k)}$.
\end{definition}
\begin{notation}\label{not-1} Let   $\T_1,\T_2$ be towers such that the last field of one of the towers is included in the last field of the other tower; this happens, for instance, if $\T_1\subseteq\T_2$. Then, we may have the same parametrization $\cP$ referred to different towers. To emphasize this we will write $\cP_{\T_i}:=(\T_i,\cP)$. Similarly, we will use $\mathscr{V}_{\T_i}, \mathscr{A}_{\cP_{\T_i}}, \mathscr{V}_{\cP_{\T_i}}$.
\end{notation}

The next proposition analyzes the behavior of the varieties associated to parametrizations with equivalent towers.

\begin{proposition}\label{prop: no order}
    Let  $\cP_{\T_R}:=(\T_R,\cP), \cP_{\T_S}:=(\T_S,\cP)$ be radical parametrizations with  $\T_R\equiv \T_S$. 
    Then, $\mathscr{A}_{\cP_{\T_R}}$, $\cV_{\cP_{\T_R}}$, $\cV_{\T_R}$ 
    are isomorphic to $\mathscr{A}_{\cP_{\T_S}}$, $\cV_{\cP_{\T_S}}$,  $\cV_{\T_S}$, respectively.
    Furthermore, $\cV_{\cP_{\T_R}}=\cV_{\cP_{\T_{S}}}$.
\end{proposition}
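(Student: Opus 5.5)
The plan is to observe that two equivalent subtowers $\T_R=\T_{(i_1,\ldots,i_k)}$ and $\T_S=\T_{(j_1,\ldots,j_k)}$ add the same set of radicals $\{\delta_{i_1},\ldots,\delta_{i_k}\}$, only in a different order. Hence their last fields coincide:
\[
\F'_{R}=\F_0(\delta_{i_1},\ldots,\delta_{i_k})=\F'_{S}.
\]
The ambient spaces $\C^{n+k}$ of the two tower varieties differ only by the permutation $\sigma$ of the $\Delta$-coordinates that sends the order $(i_1,\ldots,i_k)$ to $(j_1,\ldots,j_k)$. We aim to show that this coordinate permutation, extended by the identity on $\oT$ and $\oX$, restricts to isomorphisms $\cV_{\T_R}\to\cV_{\T_S}$ and $\mathscr{A}_{\cP_{\T_R}}\to\mathscr{A}_{\cP_{\T_S}}$, and is the identity on the $\oX$-space. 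The last identity gives $\cV_{\cP_{\T_R}}=\cV_{\cP_{\T_S}}$.

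First, the tower varieties. By Proposition \ref{prop- new E}, $\cV_{\T_R}$ is the closure of $\mathscr{Y}_{\T_R}\cap U$ for a suitable nonempty open $U$ given by nonvanishing of polynomials in $\oT$. Moreover, its coordinate ring localized at these denominators embeds in the last field, with $\Delta_{i_j}\mapsto\delta_{i_j}$. Equivalently, the ideal of $\cV_{\T_R}$ is the kernel of the $\C$-algebra morphism
\[
\C[\oT,\Delta_{i_1},\ldots,\Delta_{i_k}]\to\F'_R,\qquad T_\ell\mapsto T_\ell,\ \Delta_{i_j}\mapsto\delta_{i_j}.
\]
I would prove this kernel description from Lemma \ref{lemma- loc injection}. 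The generators $E$ lie in the kernel, and an element of the kernel becomes zero in the localized quotient after multiplying by a power of $\alpha_{\mathrm D}$, which is exactly the saturation \eqref{eq:saturation ideal}. Since $\F'_R=\F'_S$ and the two morphisms differ only by relabeling variables through $\sigma$, the two kernels correspond under $\sigma$. So $\sigma$ maps $\cV_{\T_R}$ isomorphically onto $\cV_{\T_S}$, and the variety is intrinsic to the pair (field, set of generators).

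Second, the incidence varieties. By Theorem \ref{thm: incidence var} and its proof, $\mathscr{A}_{\cP_{\T_R}}$ is the closure of the graph
\[
\{(\ot,\od,\cP(\ot,\od))\}
\]
over $\cV_{\T_R}\cap U_{\cP}$. The parametrization $\cP$ is the same element of the common field, but its canonical representatives differ, since the bases $\Base{}$ are indexed by monomials in the same variables. They define the same function on the tower variety, because they agree in the function field. I would therefore describe $\mathscr{A}$ intrinsically, as the zero set of the kernel of $\C[\oT,\oD,\oX]\to\F'_R$ with $X_i\mapsto p_i$, possibly after inverting denominators in $\oT$. The same relabeling argument then gives $\sigma\times\mathrm{id}_{\oX}\colon\mathscr{A}_{\cP_{\T_R}}\cong\mathscr{A}_{\cP_{\T_S}}$. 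Projecting to $\oX$, on which $\sigma$ acts trivially, yields equality of the closures of the images, that is $\cV_{\cP_{\T_R}}=\cV_{\cP_{\T_S}}$, and in particular they are isomorphic.

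The main obstacle is justifying that these varieties depend only on the kernel ideals, independent of the chosen canonical forms and of the opens $U_\T,U_\cP$, which differ between the two towers. The denominators $\alpha_{i,\mathrm D}$ and $p_{i,\mathrm D}$ for $\T_R$ and $\T_S$ are different polynomials in $\oT$. I would handle this with the density argument at the end of the proof of Proposition \ref{prop- new E}: an irreducible variety whose function field contains $\F_0$ is not contained in any hypersurface $\V(h(\oT))$. Consequently, intersecting with any nonempty open defined by polynomials in $\oT$ does not change the closure, so one may intersect with the union of all denominators of both towers and compare there.
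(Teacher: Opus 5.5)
Your proposal is correct, but it takes a different route from the paper.

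\textbf{The paper's route.} It reduces to $\T_R=\T$ with $\T_S$ a reordering of its floors, and then shows that the defining polynomials themselves coincide. Since $\alpha_i$ lies in both $\F_{i-1}$ and $\F_0(\delta_{\sigma(1)},\ldots,\delta_{\sigma(j-1)})$ and is written in canonical form, only $\oT$ and the $\Delta_a$ with $a$ in both index sets occur in it. Hence the same $E_{\sigma(1)},\ldots,E_{\sigma(m)}$ serve as the equations of $\T_S$. The saturation \eqref{eq:saturation ideal} is then literally the same ideal up to renaming coordinates, and the same holds for $\mathscr{A}$.

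\textbf{Your route.} You identify the ideal of $\cV_{\T_R}$ with the kernel of the evaluation map $\C[\oT,\Delta_a : a\in R]\to\F'_R$, $\Delta_a\mapsto\delta_a$. This is right: by Lemma \ref{lemma- loc injection}, $f$ lies in the kernel if and only if $\alpha_{\mathrm D}^N f\in\E$ for some $N$. You treat $\mathscr{A}$ the same way, also sending $X_i\mapsto p_i$. Injectivity survives the further localization at $A$, since $A$ is nonzero in the field. Because both towers have the same last field and the same radicals, the two maps are identical, so the conclusion is immediate.

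\textbf{What each approach buys.} Your argument shows directly that $\cV_{\T}$ and $\mathscr{A}$ depend only on the last field, the chosen $\delta_a$ and the element $\cP$. They are independent of presentation and floor order, which is the same intrinsic viewpoint behind Corollary \ref{co: no need of para}. The paper's argument is more explicit and computational: the existing $E_i$ can be reused verbatim, so nothing needs to be recomputed.

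\textbf{A correction to your last paragraph.} Your worry there is unnecessary, and your sentence that the canonical representatives of $\cP$ differ is inaccurate. Both last-field bases are the same set of monomials $\prod_{a\in R}\Delta_a^{u_a}$, $0\le u_a<e_a$, and each intermediate floor basis is a subset of it. By uniqueness of coordinates, the canonical forms of the $\alpha_a$ and of the $p_i$ therefore coincide for $\T_R$ and $\T_S$, and so do the denominators and the open sets. This is harmless for your proof, since the kernel argument never uses it, but it is exactly the fact the paper exploits.
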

\begin{proof}
    Without loss of generality, we may assume that $\T_R=\T$ and $\T_S$ is obtained by permuting the order of some floors of $\T$. So, there exists a permutation $\sigma$ of $\{1,\ldots,m\}$ such that $S=(\sigma(1),\ldots,\sigma(m))$.
    First we show that $\cV_\T\simeq \cV_{\T_S}$, i.e. they are isomorphic. 
    Note that both $\cV_\T$ and $\cV_{\T_S}$ are contained in the affine space $\C^{n+m}$ (up to the permutation of the variables via $\sigma$). Let $i\in \{1,\ldots,m\} $ and let $j$ be such that $\sigma(j)=i$. 
    Since $\T_{S}$ is a radical subtower, we have that $\alpha_{i}$ is contained in $\F_{i-1}$ and $\F_0(\delta_{\sigma(1)},\ldots,\delta_{\sigma(j-1)})$. As $\alpha_i$ is written in canonical form,  the variables appearing in $\alpha_{i}$ are  only $\overline{T}$ and $\Delta_a$ for $a\in\{1,\ldots,i-1\}\cap\{\sigma(1),\ldots,\sigma(j-1)\}$. Therefore, the polynomial $E_i=E_{\sigma(j)}$ is contained in the rings $\C[\overline{T},\Delta_1,\ldots,\Delta_{i}]$ and $\C[\overline{T},\Delta_{\sigma(1)},\ldots,\Delta_{\sigma(j)}]$. In particular, we can use the same polynomials $E_{\sigma(1)},\ldots,E_{\sigma(m)}$ for the construction of the tower $\T_S$.   Using Proposition \ref{prop- new E} and the ideal \eqref{eq:saturation ideal}, we deduce that the ideal of $\cV_{\T_R}$ and $\cV_{\T_S}$ coincide up
to the permutation of the variables via $\sigma$. So they are isomorphic. The same argument works for the incidence varieties $\mathscr{A}_{\cP_{\T_{R}}}$ and $\mathscr{A}_{\cP_{\T_{S}}}$. Finally, $\cV_{\cP_{\T_S}}=\cV_{\cP_{\T_R}}$ since they are both the closure of the image of the same variety via the same projection.
\end{proof}

 Proposition \ref{prop: no order} shows that the order of the floors of a radical subtower does not matter. Therefore, we may work with the equivalence class of radical subtowers up to reordering. In particular, we may see the sequence $S$ associated to a radical subtower $\T_S$ as a subset of $\{1,\ldots,m\}$. Working with these equivalent classes allows us to define a partial order on the set of radical subtower of $\T$ given by the inclusion on the subsets of $\{1,\ldots,m\}$ with $\T_{\emptyset}=\F_0$ and $\T$ being the minimal and maximal elements of this order respectively. Based on this, in the sequel we will speak about subtowers associated to subsets of $\{1,\ldots,m\}$.

Besides equivalent towers, the phenomenon described above can also happen with subtowers of $\T$ (see Notation \ref{not-1}). To be more precise, let $\cP\in\F_{m}^{r}$ be a radical parametrization and let $\T_S$, with 
$S=\{i_1,\ldots,i_k \}\subseteq\{1,\ldots,m\}$, be a subtower of $\T$ such that  $\cP_S\in(\F_{i_k}')^{r}$. We analyze the relation between 
  $\cA, \cV_\cP$ and $\cT$ and $\mathscr{A}_{\cP_{\T_S}}, \cV_{\cP_{\T_S}}$ and $\cV_{\T_S}$.  The answer for the incidence and tower varieties is postponed to Theorem \ref{theo:initial maps} and Corollary \ref{co: univ maps}, where a more general case is treated and which essentially state that they are connected via a dominant map. Now, we deal with the radical varieties.

\begin{theorem}\label{thm:subtower var}  
$\cV_\cP= \cV_{\cP_{\T_S}}$.
    \end{theorem}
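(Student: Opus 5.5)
We reduce the equality of the two radical varieties to the equality of the images of the parametrization on a dense set of parameter values. By Theorem \ref{thm: incidence var}, Diagram \ref{fig:diagram2} holds for both $(\T,\cP)$ and $(\T_S,\cP)$. Hence $\cV_\cP=\overline{\pi(\cA)}=\overline{\pi(\mathscr{Y}_\cP\cap U_\cP)}$, and analogously $\cV_{\cP_{\T_S}}=\overline{\pi(\mathscr{Y}_{\cP_{\T_S}}\cap U_{\cP_{\T_S}})}$. By Proposition \ref{prop: no order} we may reorder $\T$ so that $S=\{1,\ldots,k\}$ forms its first $k$ floors. Since everything is in canonical form, the $E_i$ with $i\in S$ involve only $\oT,\Delta_1,\ldots,\Delta_k$. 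The $p_j$, having a canonical representation in $\F_{i_k}'$, also involve only these variables, and canonical form in $\F_k'$ coincides with canonical form in $\F_m$ because the bases of Remark \ref{remark-bases1} are nested. So $G_1,\ldots,G_r$ are literally the same polynomials for both towers.

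The key step is to compare the two projections. Consider the forgetful map
\[
\rho:\C^{n+m+r}\to\C^{n+k+r},\quad (\ot,\delta_1,\ldots,\delta_m,\ox)\mapsto(\ot,\delta_1,\ldots,\delta_k,\ox).
\]
Then $\pi_{\T_S}\circ\rho=\pi_\T$. Clearly $\rho(\mathscr{Y}_\cP)\subseteq \mathscr{Y}_{\cP_{\T_S}}$. On the open set $U_\cP$, a point of $\mathscr{Y}_{\cP_{\T_S}}$ lifts to $\mathscr{Y}_\cP$: the remaining equations $E_{k+1},\ldots,E_m$ have nonzero leading coefficient $\alpha_{i,\mathrm{D}}(\ot)$, so one solves successively for $\delta_{k+1},\ldots,\delta_m$. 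Shrinking to $U_\cP$, which is defined by polynomials in $\oT$ and contains the denominators of both constructions, this gives $\rho(\mathscr{Y}_\cP\cap U_\cP)=\mathscr{Y}_{\cP_{\T_S}}\cap U_\cP$.

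Taking $\pi$ and closures then yields $\cV_\cP=\overline{\pi_{\T_S}(\mathscr{Y}_{\cP_{\T_S}}\cap U_\cP)}$. The remaining point is that $\mathscr{Y}_{\cP_{\T_S}}\cap U_\cP$ is dense in $\mathscr{A}_{\cP_{\T_S}}$. This follows exactly as in the second part of Proposition \ref{prop- new E}: the variety $\mathscr{A}_{\cP_{\T_S}}$ is irreducible by Theorem \ref{thm: incidence var}, and its function field contains $\F_0$, so it is not contained in any hypersurface $\V(h(\oT))$ with $h\neq 0$.

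The main obstacle is the bookkeeping of the denominators. We must check that $U_\cP$ for $\T$ is contained in $U_{\cP_{\T_S}}$; it is, since it uses a superset of the denominators. We must also check that the required lifting is possible. An alternative route avoids the lifting entirely: compare the analytic maps $\ot\mapsto\cP(\ot,\od(\ot))$. These coincide for both towers, given the convention on principal branches fixed in \eqref{eq:principal roots}. Both radical varieties are then the Zariski closure of the image of the same map on a Euclidean open set, which gives the equality directly.
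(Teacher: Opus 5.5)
Your proof is correct and follows essentially the paper's route: the same forgetful projection (the paper's $\Theta$, your $\rho$), the same density argument for the open set cut out by the extra denominators (using that $\F_0$ lies in the function field of the irreducible incidence variety), and the same commutative triangle with the projections to $\C^r$. The one difference is that you get $\rho(\mathscr{Y}_\cP\cap U_\cP)=\mathscr{Y}_{\cP_{\T_S}}\cap U_\cP$ by explicitly lifting points, solving $E_{k+1},\ldots,E_m$ successively where $\alpha_{i,\mathrm{D}}(\ot)\neq 0$, whereas the paper obtains $\overline{\Theta(\cA)}=\mathscr{A}_{\cP_{\T_S}}$ from finiteness of $\Theta$ and a dimension count; your version is slightly more elementary and gives set-theoretic surjectivity on the open set.
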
 
    \begin{proof} 
Let us assume w.l.o.g. that $S=\{1,\ldots,k\}$ and let us denote  $\hat{\delta}=(\delta_1,\ldots,\delta_k)$. Consider the ideals $K=\langle E_1,\ldots,E_m,G_1,\ldots,G_r\rangle$ 
and $K_S=\langle E_1,\ldots,E_k,G_1,\ldots,G_r\rangle$.  As in the proof of Theorem \ref{thm: incidence var}, let $U_\cP$ be the Zariski-open subset of $\C^{n+m+r}$ on which all the denominators of the construction do not vanish. Similarly, let $U_S$ be the analogous Zariski-open subset for $\T_S$.
Note that $\cA=\overline{\V(K)\cap U_\cP}$ and
$\mathscr{A}_{\cP_{\T_S}}=\overline{ \V(K_S)\cap U_S}$.
We consider the projection  
  \[ \Theta :\C^{n+m+r}  \longrightarrow \C^{n+k+r}; (\ot,\od,\ox) \longmapsto (\ot,\hat{\delta},\ox). \]
 Since $\Theta $ is a projection, it is open and $\Theta(U_\cP)\subseteq U_S$ is a Zariski-open subset.
Now, by elimination theory, we have that $\overline{\Theta(\V(K))}\subseteq\V(K_S)$. In particular, we get that
$$\Theta(U_\cP\cap\V(K))\subseteq \Theta(U_\cP)\cap\Theta(\V(K)) \subseteq \Theta(U_\cP)\cap \V(K_S)\subseteq U_S\cap\V(K_S),$$
and we deduce that $\overline{\Theta(\cA)}\subseteq\mathscr{A}_{\cP_{\T_S}}$. 

Next, we claim that $\Theta(U_\cP)\cap \mathscr{A}_{\cP_{\T_S}}$ is a nonempty dense Zariski-open subset of $\mathscr{A}_{\cP_{\T_S}}$. Since $\Theta$ is open and $\mathscr{A}_{\cP_{\T_S}}$ is irreducible by Theorem \ref{thm: incidence var}, it is enough to check that $\Theta(U_\cP)\cap \mathscr{A}_{\cP_{\T_S}}$ is  nonempty. On the contrary, assume that $\Theta(U_\cP)\cap \mathscr{A}_{\cP_{\T_S}}=\emptyset$. Then, there exists a denominator in the construction that vanishes at any choice of $(\overline{t},\overline{\delta})\in\C^{n+m}$ satisfying the equations of the tower variety. This implies that such denominator is zero as an element in $\F_m$, which leads to a contradiction. Therefore, we conclude that $\Theta(U_\cP)\cap \mathscr{A}_{\cP_{\T_S}}$ is a nonempty dense Zariski-open subset of $\mathscr{A}_{\cP_{\T_S}}$. Now, $\Theta$ restricted to $U_\cP\cap\cA$ is a finite map into $\Theta(U_\cP)\cap \mathscr{A}_{\cP_{\T_S}}$. Since $\dim \cA=\dim \mathscr{A}_{\cP_{\T_S}} = n$ and both $\cA$ and $\mathscr{A}_{\cP_{\T_S}}$ are irreducible, we conclude that $\overline{\mathrm{Im}\ \Theta( \cA)}=\mathscr{A}_{\cP_{\T_S}}$. Now, consider the commutative diagram 
\begin{equation}\label{eq:some cd}
    \begin{tikzcd}
\cA \arrow[r, "\Theta"] \arrow[d, "\pi"'] & \mathscr{A}_{\cP_{\T_S}} \arrow[ld, "\pi_S"] \\
\C^r                                      &                          
\end{tikzcd}
\end{equation}
where $\pi_S$ is the analogous projection to $\pi$ in Diagram \ref{fig:diagram2} for the subtower $\T_S$. By Diagram \eqref{eq:some cd} and using that $\overline{\Theta(\cA})=\mathscr{A}_{\cP_{\T_S}}$ 
we conclude that 
\begin{equation*} \cV_{\cP}=\overline{\mathrm{Im}\ \pi(\cA)}=\overline{\mathrm{Im}\ \pi_S(\mathscr{A}_{\cP_{\T_S}})}=\cV_{\cP_{\T_S}}.   \qedhere 
\end{equation*}
 \end{proof}

\begin{remark}\label{rem:smaller tower}
The use of a smaller tower  leads  to a more efficient diagram, since the dimension of the tower varieties stay the same but the dimension of the ambient affine space is smaller. On the other hand,  rationality may change from one tower variety to the other; see Example  \ref{ex: minimal tower better}.
\end{remark}

  \subsection{Minimal Radical Parametrizations}

    Corollary \ref{co: no need of para} shows that the tower variety does not depend on the parametrization,  but on the fixed radical tower we decide to work with. However,  given a parametrization in a radical tower, a smaller radical subtower containing the parametrization might lead to a more efficient construction. The following natural question arises: given a parametrization $\cP$ is there a minimal tower we can work with? We   approach this question here.
   We start with the following remark. 
   
   \begin{remark}\label{remark-bases}
   Let
$
\T_R:= [\F_0\subsetneq \F_1'\subsetneq\cdots\subsetneq\F_r']
$
   be a subtower of $\T$. We observe that, if $\Delta_{i_1}^{u_{i_1}}\cdots\Delta_{i_k}^{u_{i_k}}\in \Base{\T_R,\F_k'}$ and $u_{i_j}>0$, then $i_j\in R.$  
\end{remark}

\begin{proposition}\label{prop:inters of tower}
Let $\T_R,\T_S\subseteq \T$, then  $\T_{R\cap S},\T_{R\cup S}\subseteq \T$.
\end{proposition}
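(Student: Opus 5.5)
Throughout, a subset $S\subseteq\{1,\ldots,m\}$ gives a subtower exactly when its elements can be ordered so that each $\alpha_{i_j}$ lies in the field generated over $\F_0$ by the previously adjoined $\delta$'s, and each step has degree $e_{i_j}$. The plan is to reduce both claims to this condition, using canonical forms and Remark \ref{remark-bases}. For a set $S$, write $\F_S:=\F_0(\delta_i: i\in S)\subseteq\F_m$. Since $\alpha_i$ is in canonical form with respect to $\Base{\T,\F_{i-1}}$, it involves only certain variables; call them the support $\mathrm{supp}(\alpha_i)\subseteq\{1,\ldots,i-1\}$.

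The first step is a key lemma. If $\T_R\subseteq\T$ and $\beta\in\F_R$, then the canonical form of $\beta$ with respect to $\Base{\T,\F_m}$ only involves $\Delta_j$ with $j\in R$. To prove it, expand $\beta$ in $\Base{\T_R,\F_R}$. By the argument of Proposition \ref{prop: no order}, the defining polynomials $E_{i}$, $i\in R$, are the same polynomials as for $\T$. The monomials of $\Base{\T_R,\F_R}$ are products $\prod_{j\in R}\Delta_j^{u_j}$ with $u_j<e_j$, so they are elements of $\Base{\T,\F_m}$ (Remark \ref{remark-bases}). By uniqueness of coordinates in $\Base{\T,\F_m}$, this expansion is already the canonical form. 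Hence membership in $\F_R$ forces the support to lie in $R$.

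Next comes the union. Take orderings $R=(r_1,\ldots,r_a)$ and $S=(s_1,\ldots,s_b)$, and consider the sequence $r_1,\ldots,r_a$ followed by those $s_j\notin R$, in their order. Each $\alpha_{r_i}$ lies in the previous field, as in $\T_R$. Each $\alpha_{s_j}$ lies in $\F_0(\delta_{s_1},\ldots,\delta_{s_{j-1}})$, which is contained in $\F_0(\delta_{r_1},\ldots,\delta_{r_a},\delta_{s_1},\ldots,\delta_{s_{j-1}})$. It remains to check that each step is proper of degree $e_{s_j}$. For this I would use that $\T$ itself has full degree $e_1\cdots e_m$, so the monomials $\prod\Delta_i^{u_i}$ are linearly independent over $\F_0$. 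The field generated by any set $U$ of the $\delta_i$ that is closed under the $\alpha$-dependencies has $\F_0$-dimension $\prod_{i\in U}e_i$, with basis the monomials in $\Delta_U$. This forces each consecutive degree to equal $e_i$, and the same argument yields irreducibility of $\Delta_i^{e_i}-\alpha_i$ over the previous field.

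For the intersection, let $I=R\cap S$. By the lemma, $\mathrm{supp}(\alpha_i)\subseteq R$ for $i\in R$, and $\mathrm{supp}(\alpha_i)\subseteq S$ for $i\in S$. Hence for $i\in I$ we have $\mathrm{supp}(\alpha_i)\subseteq I$, and $\mathrm{supp}(\alpha_i)\subseteq\{1,\ldots,i-1\}$. Ordering $I$ increasingly, each $\alpha_i$ then lies in the field generated by the earlier $\delta$'s of $I$, and the degree argument above gives a genuine radical tower. The main obstacle is justifying the lemma rigorously, namely that canonical coordinates in a subtower agree with those in $\T$, together with the degree/irreducibility check. I would handle both through the monomial basis independence supplied by $[\F_m:\F_0]=e_1\cdots e_m$, with $\T_\emptyset$ covering the case $I=\emptyset$.
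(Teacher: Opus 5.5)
Your proposal is correct and follows essentially the paper's route. The union uses the same ordering ($R$ first, then $S\setminus R$ in the order of $\T_S$), and the intersection rests on the same fact, namely that the fields of subtowers are spanned by sub-bases of $\Base{\T,\F_m}$; the paper intersects the canonical bases of the two predecessor fields and orders $R\cap S$ as in $R$, whereas you intersect supports and order increasingly. Your explicit support lemma, and your degree/irreducibility check via the linear independence of $\Base{\T,\F_m}$, make rigorous two points the paper's proof leaves implicit.
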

\begin{proof}
Let $R=\{i_1,\ldots,i_r\}$ and $S=\{j_1,\ldots,j_s\}$ and write their corresponding towers as
\[
\begin{array}{cc}
\T_R:=& [\F_0\subsetneq \F_1':=\F_0(\delta_{i_1})\subsetneq\cdots\subsetneq\F_r':=\F_{r-1}'(\delta_{i_r})],
    \\
\T_S:=&[\F_0\subsetneq \widehat{\F}_1:=\F_0(\delta_{j_1})\subsetneq\cdots\subsetneq\widehat{\F}_s:=\widehat{\F}_{s-1}(\delta_{j_s})].
\end{array}
\]
First, we show that $\T_{R\cup S}$ is a radical subtower. Let $S\setminus R = \{j_{a_1},\ldots,j_{a_k}\}$  with $ a_1<\cdots<a_k$. Then $R\cup S= \{i_1,\ldots,i_r,j_{a_1},\ldots,j_{a_k}\}$ and its associated radical subtower $\T_{R\cup S}$ is 
\[
\F_0\subsetneq\F_1'\subsetneq\cdots\subsetneq\F_r'\subsetneq
\F_r'(\delta_{j_{a_1}})
\subsetneq\cdots\subsetneq\F_r'(\delta_{j_{a_1}},\ldots,\delta_{j_{a_k}}).
\]
To check that $\T_{R\cup S}$  is indeed a radical subtower, it is enough to see that each extension is radical.
The first $r$ floors of $\T_{R\cup S}$ are radical extensions since they are also floors of  $\T_R$. Now, for $1\leq l\leq k$, we have that 
\[
\alpha_{j_{a_l}}\in \widehat{\F}_{j_{a_l-1}}
\subseteq
\F_r'(\delta_{j_{a_1}},\ldots,\delta_{j_{a_{l-1}}}).
\]
Note that $ \widehat{\F}_{j_{a_l-1}}
\subseteq
\F_r'(\delta_{j_{a_1}},\ldots,\delta_{j_{a_{l-1}}})$ since every $j_c$ with $c\leq a_l-1$ is either in $R$ or in $\{j_{a_1},\ldots,j_{a_{l-1}}\}$.
We conclude that 
\[
\F_r'(\delta_{j_{a_1}},\ldots,\delta_{j_{a_{l-1}}})\subsetneq \F_r'(\delta_{j_{a_1}},\ldots,\delta_{j_{a_{l}}})
\] is a radical extension.

For $R\cap S$, we write $R\cap S=\{i_{a_1},\ldots,i_{a_k}\}$ as a subset of $R$  with $a_1<\cdots<a_k$, and $R\cap S=\{j_{b_1},\ldots,j_{b_k}\}$ as a subset of $S$. Say $i_{a_1}=j_{b_1},\ldots,i_{a_k}=j_{b_k}$. As before, to check that the tower
\[\begin{array}{cc}
\T_{R\cap S}:& \left[\F_0\subsetneq\tilde{\F}_1:=\F_0(\delta_{i_{a_1}})\subsetneq\cdots\subsetneq \tilde{\F}_k:=\tilde{\F}_{k-1}(\delta_{i_{a_{k}}}) \right]
\end{array}
\]
is a radical subtower of $\T$, it is enough to check that $\alpha_{i_{a_l}}\in\tilde{\F}_{l-1}$  for every $1\leq l\leq k$.  $\L:=\F_{i_{a_l-1}}'\cap \widehat{\F}_{j_{b_l-1}}$ is a non-zero vector subspace of $\F_m$ with canonical basis  (Remark \ref{remark-bases1})
\[ 
\Base{\T,\L}=
\Base{\T,\F_{i_{a_l-1}}'} \cap \Base{\T,\widehat{\F}_{j_{b_l-1}}}=
\Biggl\{
\prod_{\substack{
1\leq \lambda\leq l-1\\
b_\lambda<b_l
}}
\Delta_{i_{a_\lambda}}^{u_\lambda}
\;:\;
0\leq u_\lambda\leq e_{i_{a_\lambda}}-1,
\quad \lambda=1,\ldots,l-1
\Biggr\}.
\]
In particular, we obtain that 
\[
\Base{\T,\L}\subseteq \left\{
\Delta_{i_{a_1}}^{u_1}\cdots  \Delta_{i_{a_{l-1}}}^{u_{l-1}}\ :\ 0\leq u_\lambda\leq e_{i_{a_\lambda}}-1
\text{ for } \lambda=1,\ldots,l-1
\right\}.
\]
We deduce that 
 $\L\subset \tilde{\F}_{l-1}$ and, by construction, $\alpha_{i_{a_l}}=\alpha_{j_{b_l}}\in \L$. Thus,  $\alpha_{i_{a_l}}\in\tilde{\F}_{l-1}$.
\end{proof}

Using Proposition \ref{prop:inters of tower}, we derive the following result.

\begin{corollary}\label{cor:minimal tower}
    Given   $\mathscr{F}\subset \F_m$,  there exists  {$R\subset\{1,\ldots,m\}$} such that $\T_{R}$ is the minimal radical subtower of $\T$ containing $\mathscr{F}$.
\end{corollary}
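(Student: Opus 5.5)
The plan is to take the intersection of all radical subtowers of $\T$ whose last field contains $\mathscr{F}$, and use Proposition \ref{prop:inters of tower} to see that this intersection is again a radical subtower. By Proposition \ref{prop: no order} we may identify subtowers with subsets of $\{1,\ldots,m\}$. Let
\[
\Sigma:=\{S\subseteq\{1,\ldots,m\}\ :\ \T_S\subseteq\T \text{ and } \mathscr{F}\subseteq \text{last field of } \T_S\}.
\]
This family is nonempty, because $\{1,\ldots,m\}\in\Sigma$: the tower $\T$ itself has last field $\F_m\supseteq\mathscr{F}$. It is also finite, since it consists of subsets of a finite set.

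Next I would define $R:=\bigcap_{S\in\Sigma}S$. Applying Proposition \ref{prop:inters of tower} repeatedly, pairwise over the finitely many members of $\Sigma$, shows that $\T_R\subseteq\T$ is a radical subtower.

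The key step is to show that the last field of $\T_R$ still contains $\mathscr{F}$. For this I would use the canonical representation. The last field of $\T_S$ is the $\F_0$-subspace of $\F_m$ spanned by the monomials $\Delta_1^{u_1}\cdots\Delta_m^{u_m}$ of $\Base{\T,\F_m}$ that involve only variables $\Delta_i$ with $i\in S$; this follows from Remark \ref{remark-bases} together with Remark \ref{remark-bases1}.

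Now write each $f\in\mathscr{F}$ in canonical form. Let $\mathrm{supp}(f)$ be the set of indices $i$ such that $\Delta_i$ appears with positive exponent in some monomial of $f$ with nonzero coefficient. Uniqueness of coordinates with respect to $\Base{\T,\F_m}$ gives that $f$ lies in the last field of $\T_S$ if and only if $\mathrm{supp}(f)\subseteq S$. Hence $\mathrm{supp}(f)\subseteq S$ for every $S\in\Sigma$, so $\mathrm{supp}(f)\subseteq R$, and therefore $f$ lies in the last field of $\T_R$. This shows $R\in\Sigma$, and by construction $R\subseteq S$ for every $S\in\Sigma$. Thus $\T_R$ is the minimum of $\Sigma$ with respect to the partial order on subtowers introduced after Proposition \ref{prop: no order}.

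I expect the main obstacle to be the claimed description of the last field of $\T_S$ as exactly the span of the monomials of $\Base{\T,\F_m}$ in the variables indexed by $S$. One must check that the canonical basis of the subtower, computed with its own reordered floors, is a subset of $\Base{\T,\F_m}$, and that it spans the correct subspace. This should follow from Proposition \ref{prop: no order}, which lets us use the same polynomials $E_i$, and from a degree count: both subspaces have $\F_0$-dimension $\prod_{i\in S}e_i$, and the subtower's basis is contained in the monomial span.
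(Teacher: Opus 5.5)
Your proof is correct and follows the paper's argument: you take $R$ to be the intersection of all index sets $S$ with $\T_S\subseteq\T$ containing $\mathscr{F}$, and apply Proposition~\ref{prop:inters of tower} to conclude that $\T_R$ is a radical subtower. Your canonical-support step, showing that $f$ lies in the last field of $\T_S$ exactly when $\mathrm{supp}(f)\subseteq S$, correctly supplies the check that $\T_R$ still contains $\mathscr{F}$, which the paper leaves as holding ``by construction''; the obstacle you flag is mild, since the power basis of $\T_S$ is literally the subset of $\Base{\T,\F_m}$ consisting of the monomials in the $\Delta_i$ with $i\in S$.
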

\begin{proof}
Let $R$ be the intersection of all subsets $S\subseteq\{1,\ldots,m\}$ such that 
$\T_S$ is a radical subtower of $\T$ containing $\mathscr{F}$. By Proposition \ref{prop:inters of tower}, 
$\T_{R}$ is a radical subtower. By construction, $\T_{R}$ is the minimal radical subtower of $\T$ containing $\mathscr{F}$.
\end{proof}

Corollary \ref{cor:minimal tower} leads to the following definition.

\begin{definition}
    Let $\cP=(p_1,\ldots,p_r)\in \F_{m}^{r}$ be a  radical parametrization. The minimal radical subtower  containing $p_1,\ldots,p_r$ is called the \textsf{minimal radical subtower of $\cP$}, and it is denoted by  $\mT$.
    We say that $(\mT,\cP)$ is a \textsf{minimal radical parametrization} of $\cP$. 
\end{definition}

Applying Corollary \ref{cor:minimal tower} to the set defined by all the components of a parametrization, one gets the next result.

\begin{corollary}
Every radical variety admits a minimal radical parametrization.   
\end{corollary}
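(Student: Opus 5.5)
The plan is to combine Corollary \ref{cor:minimal tower} with Theorem \ref{thm:subtower var}. Let $\cV_\C(\mathscr{F})$ be a radical variety. By definition it has a radical parametrization $(\T,\cP)$ with $\cP=(p_1,\ldots,p_r)\in\F_m^r$ and $\cV_\cP=\cV_\C(\mathscr{F})$.

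First, I apply Corollary \ref{cor:minimal tower} to the finite set $\{p_1,\ldots,p_r\}\subset\F_m$. This gives a subset $R\subseteq\{1,\ldots,m\}$ such that $\mT=\T_R$ is the minimal radical subtower of $\T$ containing all $p_j$. Writing $\F'_{R}$ for the last field of $\T_R$, this means $\cP\in(\F'_R)^r$.

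Next, I check that $(\mT,\cP)$ is itself a radical parametrization. The entries of $\cP$ lie in the last field of a radical tower over $\F_0=\C(\oT)$. The Jacobian $\mathcal{J}_{\oT}(\cP)$ is the same object whether $\cP$ is regarded in $\F_m$ or in $\F'_R$, since $\F'_R\subseteq\F_m$ and the derivations with respect to $T_i$ extend uniquely along these algebraic extensions. Hence its full rank is preserved.

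Then I invoke Theorem \ref{thm:subtower var}, which gives $\cV_\cP=\cV_{\cP_{\T_R}}$. So $(\mT,\cP)$ parametrizes the same radical variety $\cV_\C(\mathscr{F})$, and it is minimal by construction. One point needs a brief justification: the dominance of the radical map. It follows from the equality of radical varieties together with $\dim\cV_{\cP_{\T_R}}=n$ (Remark \ref{Remark-first}), so no real obstacle arises.

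The main subtlety I expect is making sure the branch convention \eqref{eq:principal roots} is compatible when passing to the subtower. This holds because the $\delta_i$ kept in $\T_R$ are the same principal radicals as in $\T$.
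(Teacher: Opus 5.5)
Your proposal is correct and follows the paper's route. The paper likewise applies Corollary~\ref{cor:minimal tower} to the components $p_1,\ldots,p_r$ of a parametrization, with Theorem~\ref{thm:subtower var} ensuring that $(\mT,\cP)$ still parametrizes the same radical variety. Your extra checks on the Jacobian rank and branch compatibility are sound; the one refinement is that dominance needs no dimension count, because the radical map $\overline{t}\mapsto\cP(\overline{t},\overline{\delta}(\overline{t}))$ is the same function whether $\cP$ is read in $\T$ or in $\mT$.
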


Taking into account Definition \ref{def-uni}, one has the following result.

\begin{corollary}
Every universal radical parametrization is minimal.   
\end{corollary}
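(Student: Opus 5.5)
We have to show that for the universal parametrization $(\T,\cPU)$, with $\cPU=(\oT,\Delta_1,\ldots,\Delta_m)$, the minimal radical subtower $\mT$ of $\cPU$ (given by Corollary \ref{cor:minimal tower}) is $\T$ itself, up to the equivalence of Proposition \ref{prop: no order}. The plan is to show that every radical subtower $\T_R\subseteq\T$ containing all components of $\cPU$ must have $R=\{1,\ldots,m\}$. Since $\T$ is itself a radical subtower containing $\cPU$, it will then be the minimal one.

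Let $\T_R$ contain $T_1,\ldots,T_n,\Delta_1,\ldots,\Delta_m$. The coordinates $T_i$ lie in $\F_0$ and impose nothing. For each $j\in\{1,\ldots,m\}$ the component $\Delta_j$ must lie in the last field $\F'$ of $\T_R$. By Remark \ref{remark-bases1}, the last field of $\T_R$ is spanned over $\F_0$ by the monomials of $\Base{\T_R,\F'}$. By Remark \ref{remark-bases}, these monomials involve only the variables $\Delta_i$ with $i\in R$. Moreover, they form a subset of the canonical basis $\Base{\T,\F_m}$, since every exponent is bounded by $e_i-1$.

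Next we use the uniqueness of canonical coordinates (Definition \ref{def-canonical}). The element $\Delta_j$ is itself the basis monomial $\Delta_j^1$ of $\Base{\T,\F_m}$. If $\Delta_j\in\F'$, it is an $\F_0$-combination of monomials of $\Base{\T,\F_m}$ that avoid $\Delta_j$ whenever $j\notin R$. Linear independence of $\Base{\T,\F_m}$ then forces $j\in R$. Hence $R=\{1,\ldots,m\}$, so $\mT\equiv\T$ and $(\T,\cPU)$ is minimal.

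The main obstacle is the claim that the basis of a subtower's last field is a subset of $\Base{\T,\F_m}$. This requires the degrees $[\F'_{i_l}:\F'_{i_{l-1}}]$ to equal $e_{i_l}$, which holds by the definition of radical tower, and the ordering of floors to be harmless, which is guaranteed by Proposition \ref{prop: no order}. Once this is granted, the rest is a direct comparison of canonical coordinates.
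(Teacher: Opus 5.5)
Your argument is correct and is essentially the paper's. The paper deduces the corollary directly from Definition \ref{def-uni}; its implicit content is Lemma \ref{lem: lin alg} (through Remark \ref{remark-bases}) applied to each coordinate $\Delta_j$ of $\cPU$, which is exactly the comparison of canonical coordinates in $\Base{\T,\F_m}$ that you carry out.

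The appeal to Proposition \ref{prop: no order} is not needed. The degree condition holds by the definition of a radical subtower in any order, and the basis monomials do not depend on the order of the floors.
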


 The following example exhibits the advantages of working with the  smallest radical subtower associated to a parametrization.

\begin{example}\label{ex: minimal tower better}
    Continuation of Example \ref{ex: representatives}. We take the tower $\T$, introduced there with the polynomials $E_i$ written in canonical form. Now, we consider 
    \[
    \cP=(\Delta_2+T-1,\Delta_2\Delta_3-T^2+1)\in\F_3^2
    \]
    and the radical parametrizations $(\T,\cP)$ and $(\mT,\cP)$.     Since $\Delta_1$ does not appear in   $\cP$, one may check that the minimal radical subtower of $\cP$ is 
    \[
    \T_{\{2,3\}}:=\left[
    \F_0\subsetneq \F_1'=\F_0[\Delta_2]/\langle E_2\rangle
    \subsetneq \F_2'=\F_1'[\Delta_3]/\langle E_3\rangle
    \right].
    \]
    As we have seen in Theorem \ref{thm:subtower var}, $\cV_\cP$ does not depend on which tower we work with. Indeed, in this example, the radical variety is a plane curve  of genus $0$, defined by a polynomial of degree $17$ with $97$ terms.
    Using $(\T,\cP)$ we get the tower variety
    \[ \cT=\V(T\Delta_{1}^{2}-1, T \Delta_{2}^{2}-T -1, T \Delta_{3}^{3}-T -1)\subset \C^{4}. \]
    However, the genus of $\cT$ is 2. Using $(\mT,\cP)$ we get 
    \[  \mathscr{V}_{\mT}= \V( T\Delta_{2}^{2}-T -1, T \Delta_{3}^{3}-T -1)\subset \C^{3} \]
    that has genus 0. Indeed, it can be parametrized birationally as 
    \[ \mathscr{T}:=\left(\dfrac{1}{h^{6}-1}, h^{3}, h^{2}\right). \]
     So, using Remark  \ref{Remark-first} (\ref{Rem2}) we have that $\cV_\cP$ can be parametrized as
    \[ \cP\circ\mathscr{T} = \left(\frac{h^{9}-h^{6}-h^{3}+2}{h^{6}-1}, 
\frac{h^{5} \left(h^{12}+h^{7}-2 h^{6}-2 h +1\right)}{\left(h^{6}-1\right)^{2}}
\right).
\]
\end{example}

\subsection{Computing Minimal Radical Parametrizations}\label{sec:algorithms}

In this subsection we study how to identify whether a parametrization is minimal or not, and if not, how to find the minimal radical subtower. For this purpose, we assume we are given a parametrization $(\T,\cP)$ as in the previous sections. We start with a technical lemma.

\begin{lemma}\label{lem: lin alg}
    Let $f\in\F_m$, let $\T_S$ be the minimal radical subtower of $\T$ containing $f$. If $\Delta_j$ appears in $f$, when written in canonical form,  then $j\in S$.
\end{lemma}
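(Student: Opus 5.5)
We argue by contraposition via the uniqueness of coordinates in the canonical basis. Let $\T_S$ be the minimal radical subtower containing $f$; it exists by Corollary \ref{cor:minimal tower}. After reordering the floors we may write
\[
\T_S=[\F_0\subsetneq \F_1'\subsetneq\cdots\subsetneq\F_k']
\]
with $S=\{i_1,\ldots,i_k\}$; by Proposition \ref{prop: no order} the order is irrelevant. Since $f\in\F_k'$, we can expand $f$ in the basis $\Base{\T_S,\F_k'}$. By Remark \ref{remark-bases}, every monomial of this basis involves only the variables $\Delta_{i}$ with $i\in S$, and each exponent satisfies $u_{i}\le e_{i}-1$. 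So we obtain an expression
\[
f=\sum c_{\overline{u}}\,\Delta_{i_1}^{u_{1}}\cdots\Delta_{i_k}^{u_{k}},\qquad c_{\overline{u}}\in\F_0 .
\]

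The key step is to show that this expression is the canonical form of $f$ in $\F_m$ with respect to $\Base{\T,\F_m}$. Each monomial $\Delta_{i_1}^{u_1}\cdots\Delta_{i_k}^{u_k}$ with $0\le u_j\le e_{i_j}-1$ is itself an element of $\Base{\T,\F_m}$, namely the one whose exponents at indices outside $S$ are zero. Distinct exponent vectors give distinct basis elements. Hence the displayed sum is already an $\F_0$-linear combination of elements of $\Base{\T,\F_m}$.

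Because $\Base{\T,\F_m}$ is a basis of $\F_m$ over $\F_0$ (Remark \ref{remark-bases1}), coordinates are unique. Therefore this sum coincides with the canonical form of $f$. In that expression only the variables $\Delta_i$ with $i\in S$ occur. Consequently, if $\Delta_j$ appears in the canonical form of $f$, then $j\in S$.

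The main subtlety lies in identifying the two canonical forms. One must make sure that the relations of $\T_S$ and of $\T$ reduce monomials in the same way, that is, that a reduced monomial for $\T_S$ is still reduced for $\T$. This holds because both towers use the same degrees $e_i$ for the same $\Delta_i$. Also, the inclusion $\F_k'\subseteq\F_m$ is the identity on the elements $\delta_i$, so the basis monomials of $\F_k'$ are literally elements of $\F_m$.
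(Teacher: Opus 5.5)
Your proof is correct and follows the same route as the paper, which also derives the claim from Remark~\ref{remark-bases} through the canonical basis $\Base{\T,\F_m}$. You make explicit the step the paper leaves implicit: since $\Base{\T_S,\F_k'}\subseteq\Base{\T,\F_m}$, uniqueness of coordinates shows that the canonical form of $f$ in $\F_m$ is exactly its expansion in $\Base{\T_S,\F_k'}$, and, as in the paper, only $f\in\F_k'$ is used, not the minimality of $\T_S$.
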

\begin{proof}
Let $j\in\{1,\ldots,m\}$ be such that $\Delta_j$ appears in the canonical expression of $f$. Then, there exist
$u_1,\ldots,u_m$ with $0\leq u_i\leq e_i-1, i\in \{1,\ldots,m\}$, and $u_j>0$, such that $\Delta_1^{u_1}\cdots\Delta_m^{u_m}$ 
appears in $f$, and hence
$\Delta_1^{u_1}\cdots\Delta_m^{u_m}\in \Base{\T,\F_m}$. Now, the result  follows from Remark \ref{remark-bases}.
\end{proof}

Using Lemma \ref{lem: lin alg}, we derive the following algorithm.

\begin{algorithm}[H]
\floatname{algorithm}{Algorithm}
    \caption{Given $f\in \F_m$, in canonical form, it computes   the minimal  subtower containing $f$.}\label{alg:1}
    
    \begin{itemize}[leftmargin=1.2em, itemsep=2pt]
        \item \textbf{Input:} An element  $f\in \F_m$.
        \item \textbf{Output:}  $R\subseteq\{1,\ldots,m\}$ associated to the minimal subtower of $\T$ containing $f$.
        \begin{enumerate}[leftmargin=1em, itemsep=2pt]
            \item  Set $R=\emptyset$. 
            \item\label{step-bucle1} Add to $R$ each $i\in\{1,\ldots,m\}$ such that $\Delta_i$ appears in $f$.
            \item\label{step-bucle2}  For each new index $i$ added to $R$ in Step \ref{step-bucle1}, repeat  Steps \ref{step-bucle1}, \ref{step-bucle2} taking $f$ as $\alpha_i$.
            \item Return $R$.
        \end{enumerate}
    \end{itemize}
\end{algorithm} 

\begin{proposition}\label{prop: alg 1}
    Algorithm \ref{alg:1} is correct.
\end{proposition}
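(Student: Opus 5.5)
The plan is to prove two inclusions between the output $R$ of Algorithm \ref{alg:1} and the index set $S$ of the minimal radical subtower $\T_S$ containing $f$. The set $S$ exists by Corollary \ref{cor:minimal tower}. Termination is immediate: $R$ only grows inside the finite set $\{1,\ldots,m\}$, and each index triggers Steps \ref{step-bucle1}--\ref{step-bucle2} at most once.

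\emph{Step 1: $R\subseteq S$.} I argue by induction on the order in which indices enter $R$. Indices added while scanning $f$ itself lie in $S$ by Lemma \ref{lem: lin alg}. Now suppose $i\in S$ and $\Delta_j$ appears in the canonical form of $\alpha_i$. Since $\T_S$ is a radical subtower, $\alpha_i$ lies in the field $\F'$ just below the floor where $\delta_i$ is adjoined. That field is the last field of the radical subtower $\T_{S'}$ consisting of the floors of $\T_S$ preceding $\delta_i$, with $S'\subseteq S$. By Remark \ref{remark-bases}, the canonical form of $\alpha_i$ in $\Base{\T,\F_m}$ involves only monomials in $\Delta_a$ with $a\in S'$. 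The point to make explicit is that the canonical form is unique, and the basis of the subtower is a subset of $\Base{\T,\F_m}$. Hence $j\in S'\subseteq S$.

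\emph{Step 2: $S\subseteq R$.} By minimality of $S$, it is enough to show that $\T_R$ is a radical subtower of $\T$ containing $f$. I order $R$ increasingly as $i_1<\cdots<i_k$. By the closure property of the algorithm, for every $i\in R$ all $\Delta_j$ appearing in the canonical form of $\alpha_i$ satisfy $j\in R$. Canonical form also forces $j<i$, since $\alpha_i\in\F_{i-1}$. So $\alpha_{i_l}$ is a $\F_0$-combination of monomials in $\Delta_{i_1},\ldots,\Delta_{i_{l-1}}$, that is, $\alpha_{i_l}\in\F_0(\delta_{i_1},\ldots,\delta_{i_{l-1}})$. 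Each floor $\F_0(\delta_{i_1},\ldots,\delta_{i_{l-1}})\subsetneq\F_0(\delta_{i_1},\ldots,\delta_{i_l})$ is then generated by an $e_{i_l}$-th root of an element of the previous field.

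\emph{Main obstacle: strictness and degree.} The definition of radical subtower needs each of these extensions to be proper, and really of degree $e_{i_l}$, so that $\Delta^{e_{i_l}}-\alpha_{i_l}$ is irreducible. My plan is a degree count. The monomials $\Delta_{i_1}^{u_1}\cdots\Delta_{i_l}^{u_l}$ with $0\le u_\lambda<e_{i_\lambda}$ are distinct elements of $\Base{\T,\F_m}$, so they are linearly independent over $\F_0$. Their $\F_0$-span is closed under multiplication, because reductions by the $E_{i_\lambda}$ stay inside these variables, as shown above. It is therefore the field $\F_0(\delta_{i_1},\ldots,\delta_{i_l})$, of degree $e_{i_1}\cdots e_{i_l}$ over $\F_0$. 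Consecutive degrees then differ by exactly $e_{i_l}$, which gives irreducibility and strictness. Finally, $f$ is a combination of monomials in $\Delta_j$ with $j\in R$, so $f$ lies in the last field of $\T_R$. Therefore $S\subseteq R$, and hence $R=S$.
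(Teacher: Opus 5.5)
Your proposal is correct and follows the same route as the paper's proof. The paper also argues termination, then uses the closure of $R$ under scanning the $\alpha_i$ together with the canonical form to place $\alpha_{i_l}$ in $\F_0(\delta_{i_1},\ldots,\delta_{i_{l-1}})$, so that $\T_R$ is a radical subtower containing $f$, and finally applies Lemma \ref{lem: lin alg} to get $R\subseteq S$, whence $R=S$ by minimality. Your explicit induction for indices discovered through some $\alpha_i$, and your degree count showing that each new floor has degree exactly $e_{i_l}$, fill in details the paper leaves implicit; they do not change the argument.
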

\begin{proof}
    First, note that the algorithm finishes: indeed, let $i$ be an index added to $R$ in Step \ref{step-bucle1}. Once we apply Steps \ref{step-bucle1} and \ref{step-bucle2} to $\alpha_i$, we repeat this process for a new index $j\in\{1,\ldots,m\}$ which has $j<i$. Therefore, this process finishes.  
    
    Let $R=\{i_1,\ldots,i_k\}$ be the subset output by Algorithm \ref{alg:1}.
    We may assume that $i_1<\cdots<i_k$ is the order induced on $R$ by the initial radical tower. 
    By construction, for every $1\leq a\leq k$, all the $\Delta$'s appearing in $\alpha_{i_a}$ (modulo $\E$, see \eqref{eq-E}) are in $R$. Therefore, they must be among $\Delta_{i_1},\ldots,\Delta_{i_{a-1}}$. So,   $\alpha_{i_a}$ is contained  in the previous field and, hence, $\T_R$ is a radical subtower of $\T$. Moreover, by Steps 2 and 3, the last field of $\T_R$ contains $f$.

    Finally, by Lemma \ref{lem: lin alg}, every index $a$ added to $R$ in Step \ref{step-bucle1} is also in the associated set of the minimal radical tower. Thus $\T_R$ is the minimal radical subtower.
    \end{proof}

Using Algorithm \ref{alg:1}, we present an algorithm for calculating the minimal radical subtower associated to a parametrization $\cP$.

\begin{algorithm}[H]
    \caption{It finds the minimal   subtower associated to a parametrization}\label{alg:3}
    \begin{itemize}[leftmargin=1.2em, itemsep=2pt]
        \item \textbf{Input:} A parametrization $(\T,\cP:=(p_1,\ldots,p_r)),$ where $\cP\in\F_m^r$.
        \item \textbf{Output:} The subset $S$ associated to the minimal  subtower containing $\cP$.
         \begin{enumerate}[leftmargin=1.em, itemsep=2pt]
            \item Set $S =\emptyset$.
            \item For every $i\in \{1,\ldots,r\}$ apply Algorithm \ref{alg:1} to $p_i$, and add to $S $ the corresponding output subset.
            \item Return $S $.
        \end{enumerate}
    \end{itemize}
\end{algorithm}

\begin{example}
        In this example we illustrate  Algorithm \ref{alg:3}. We consider the tower 
        \[  \T:=\left[ \F_0=\C(T)\subsetneq \mathbb{F}_1 \subsetneq \mathbb{F}_2 \subsetneq \mathbb{F}_3 \subsetneq \mathbb{F}_4   \right] \]
 where 
        \[ \begin{array}{ll}
  \mathbb{F}_1 = \mathbb{F}_0[\Delta_1]/\langle T\Delta_1^2 - 1\rangle, &
   \mathbb{F}_2 = \mathbb{F}_1[\Delta_2]/\langle T\Delta_2^2 - T - 1\rangle,\\
  \mathbb{F}_3 = \mathbb{F}_2[\Delta_3]/\langle T^{2} \Delta_{3}^{3}-T^{2}-2 T -1\rangle, & 
  \mathbb{F}_4 = \mathbb{F}_3[\Delta_4]/\langle  \Delta_{4}^{3}-\Delta_2-1\rangle
  ,\end{array}\]
which is in canonical form. 
Let  
\[ \cP=\left(\dfrac{T+1}{\Delta_2(\Delta_{4}-1)}, \Delta_{4}+\Delta_3^{4}, 
\Delta_{1}^{6}+\Delta_{2}^{4}
\right). 
\]
We rewrite $\cP$ as
\[ 
\cP=\left(T(\Delta_4^2+\Delta_4+1), \Delta_{4}+\Delta_3^{4}, 
\Delta_{1}^{6}+\Delta_{2}^{4}
\right).
\]
The first component is in canonical form, but the second and the third are not. Expressing canonically $\cP$, we get
\[
\cP=\left(T\Delta_4^2+T\Delta_4+T, \Delta_{4}+\frac{T^2+2T+1}{T^2}\Delta_3, 
\frac{1}{T^3}+\frac{(T+1)^2}{T^2}
\right). 
\]
Following Algorithm \ref{alg:3}, we apply Algorithm  \ref{alg:1} to $p_1$, $p_2$ and $p_3$. 
Step 2 of Algorithm \ref{alg:1} for $p_1$ adds $\Delta_4$ to the minimal subtower we aim to compute. Step 3 of Algorithm \ref{alg:1} applies the same process to $\alpha_4=\Delta_2+1$, and hence, it adds $\Delta_2$ to the minimal radical subtower. Since $\alpha_2$ depends only on the variable $T$, we deduce that the minimal radical subtower containing $p_1$ is $\T_{\{2,4\}}$.  Similarly, we obtain that the minimal radical subtower containing $p_2$ is $\T_{\{2,3,4\}}$. The minimal subtower containing $p_3$ is $\T_{\emptyset}$. 
We conclude that $\mT=\T_{\{2,3,4\}}.$
    \end{example}

Note that in Algorithm \ref{alg:3}, Step 2 does depend  on   $\Delta_j$  in the representation of $p_{i}$ with respect to the basis $\Base{\T,\F_m}$. If one fixes the radical tower and wants to run the above algorithm several times for distinct radical parametrizations, it might be useful to follow the  alternative approach developed below.

For a given radical $\Delta_i$ for $i\in \{1,\ldots,m\}$, we denote the minimal subtower containing $\Delta_i$ by $\T_i$,  and by $S_i$  the subset associated to $\T_i$. The next result allows us to calculate the poset of radical subtowers.

\begin{proposition}\label{prop: 2 deltas}
    For $i,j\in\{1,\ldots,m\}$, the subset associated to the minimal radical subtower containing $\Delta_i$ and $\Delta_j$ is $S_i\cup S_j$.
\end{proposition}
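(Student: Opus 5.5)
The plan is to prove two inclusions between $S_i\cup S_j$ and the subset $M$ of the minimal radical subtower containing $\{\Delta_i,\Delta_j\}$, whose existence is given by Corollary \ref{cor:minimal tower}.

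For $S_i\cup S_j\subseteq M$: the tower $\T_M$ is a radical subtower whose last field contains $\Delta_i$. By definition, $\T_{S_i}$ is the minimal radical subtower containing $\Delta_i$. More precisely, the proof of Corollary \ref{cor:minimal tower} shows that $S_i$ is the intersection of all subsets $S$ such that $\T_S$ contains $\Delta_i$. Hence $S_i\subseteq M$, and likewise $S_j\subseteq M$.

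For $M\subseteq S_i\cup S_j$: by Proposition \ref{prop:inters of tower}, $\T_{S_i\cup S_j}$ is a radical subtower of $\T$. Its last field is $\F_0(\delta_a : a\in S_i\cup S_j)$, and this field contains the last fields of both $\T_{S_i}$ and $\T_{S_j}$. It therefore contains $\Delta_i$ and $\Delta_j$. Minimality of $M$, again read as the intersection of all admissible subsets, gives $M\subseteq S_i\cup S_j$.

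The only delicate point is making sure that "minimal" means least with respect to inclusion of index sets, and not merely minimal among some candidates. This is exactly what the intersection construction in Corollary \ref{cor:minimal tower} provides, since Proposition \ref{prop:inters of tower} guarantees that the intersection is again a radical subtower. One more check is needed: the last field of a tower indexed by a set $S$ is $\F_0(\delta_a : a\in S)$, independent of the order of the floors. This is immediate, and it is consistent with Proposition \ref{prop: no order}. With these observations the argument is a direct two-sided inclusion, and I expect no real obstacle.
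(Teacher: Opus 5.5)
Your proof is correct and follows the paper's own argument. You get $S_i\cup S_j\subseteq R$ because the minimal tower containing both radicals contains each of them, and $R\subseteq S_i\cup S_j$ because $\T_{S_i\cup S_j}$ is a radical subtower (Proposition \ref{prop:inters of tower}) containing $\Delta_i$ and $\Delta_j$; your remark that ``minimal'' means least index set, via the intersection construction of Corollary \ref{cor:minimal tower}, makes explicit a point the paper leaves implicit.
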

\begin{proof}
    Let $R $ be the subset associated to the minimal radical subtower $\T_R$ containing $\Delta_i$ and $\Delta_j$. Since $\Delta_i,\Delta_j$ are in $\T_R$,  we get that $S_i\cup S_j\subseteq R$. On the other hand, by Proposition \ref{prop:inters of tower}, $\T_{S_i\cup S_j}$ is a radical subtower and contains $\Delta_i$ and $\Delta_j$. We conclude that $R=S_i\cup S_j$.
\end{proof}

 Proposition \ref{prop: 2 deltas} implies that
  every radical subtower of $\T$ is of the form $\T_R$ where 
   \[
   R=\bigcup_{i\in S}S_i 
   \]
   for some $S\subseteq\{1,\ldots,m\}$. The biggest choice of $S$ would be $R$, though that might not be the simplest possible set. Using  Proposition \ref{prop: 2 deltas}, we can compute the poset of radical subtowers of a radical tower.

\begin{algorithm}[H]
\floatname{algorithm}{Algorithm}
    \caption{Algorithm for finding the poset of radical subtowers}\label{alg:2}
    \begin{itemize}[leftmargin=1.2em, itemsep=2pt]
        \item \textbf{Input:} A radical tower $\T$.
        \item \textbf{Output:} The poset $\mathrm{RadSubT }(\T)$ of subsets of $\{1,\ldots,m\}$ that yields a 
        radical subtower of $\T$.
        \begin{enumerate}[leftmargin=1em, itemsep=2pt]
            \item For every $i\in \{1,\ldots,m\}$, use Algorithm \ref{alg:1} applied to $\Delta_i$ to calculate $S_i$.
            \item Return the poset $\mathrm{RadSubT }(\T)$ whose elements are all the possible finite unions of $S_1,\ldots,S_m$.
        \end{enumerate}
    \end{itemize}
\end{algorithm}

 Now, Algorithm \ref{alg:3} can be rewritten as follows.
 
\begin{algorithm}[H]
    \caption{It finds the minimal radical subtower associated to a parametrization}\label{alg:4}
    \begin{itemize}[leftmargin=1.2em, itemsep=2pt]
        \item \textbf{Input:} A parametrization $(\T,\cP:=(p_1,\ldots,p_r)),$ with $\cP\in\F_m^r$.
        \item \textbf{Output:} The subset  associated to the minimal  subtower containing $\cP$.
         \begin{enumerate}[leftmargin=1em, itemsep=2pt]
            \item Apply Algorithm \ref{alg:2} to get  $\mathrm{RadSubT }(\T)$.
            \item  Compute the set $\mathscr{S}$ of $\Delta_i$ appearing in the representation of all $p_i$ w.r.t. $\Base{\T,\F_m}$
            \item Find in $\mathrm{RadSubT }(\T)$ the smallest set $A$ containing the subindexes associated to $\mathscr{S}$.
            \item Return the subtower corresponding to $A$.
        \end{enumerate}
    \end{itemize}
\end{algorithm}

\section{Dominant Radical Parametrizations}\label{sec:dominant}

In Section \ref{sec:subtower}, given a radical parametrization $(\T,\cP)$ with $\T$ as in \eqref{tower}, we have analyzed the radical  and tower variety of $\cP$ considering subtowers  $(\T',\cP)$ with $\T'\subseteq \T$, arriving at the notion of minimal subtower of a parametrization $(\mT,\cP)$. 

In this section, we consider  a more general scenario, namely, we are given $(\T,\cP)$
and $(\T^*,\cP)$ where the only connection between the two towers is that the last floor of one of them is included in the last floor of the other. In this setting, we study the relation between the radical and tower varieties of $(\T,\cP)$ and $(\T^*,\cP)$.

Let us introduce the terminology to be used in this section. The radical towers $\T$ and $\T^*$ are 
as in Diagram \ref{fig:torres}. We introduce $\widehat{E}_i,\widehat{E}_{i}^{*}, E_i,{E}_{i}^{*}$ being defined as in \eqref{eq-Etilde}, \eqref{eq-pols}.
 Also (see \eqref{eq-E}), we consider the ideals
$\E= \langle E_1,\ldots,E_m \rangle\subset \C[\oT,\oD]$, $\E^*= \langle E_{1}^*,\ldots,E_{k}^* \rangle\subset \C[\oT,\Delta_{1}^{*},\ldots,\Delta_{k}^{*}]$.

\begin{diagram}[ht]
\begin{equation}
\begin{array}{c@{\;}c@{\;}c@{\;}c@{\;}c@{\;}c@{\;}c}
 & \raisebox{-10pt}[0pt][0pt]{\upsubsetneq} & \mathbb{F}_1=\mathbb{F}_0(\delta_1) & \subseteq\cdots\subseteq
   & \mathbb{F}_i=\mathbb{F}_{i-1}(\delta_i) & \subseteq\cdots\subseteq
   & \mathbb{F}_m=\mathbb{F}_{m-1}(\delta_m)  \Big{]} =\T \\[4pt]
\Big{[}\ \mathbb{F}_0 & & & & \versubseteq & & \\[4pt]
 & \raisebox{10pt}[0pt][0pt]{\downsubsetneq} & \mathbb{F}_1^{*}=\mathbb{F}_0(\delta_1^{*}) & \subseteq\cdots\subseteq
   & \mathbb{F}_k^{*}=\mathbb{F}_{k-1}^{*}(\delta_k^{*}) & \Big{]}=\T^{*}&
\end{array}
\end{equation}

 \noindent where 
\begin{equation*}\label{eq-delta,delta*} 
\begin{array}{l}
\text{$\F_i=\F_{i-1}(\delta_i)$, $\delta_i^{e_i} = \alpha_i \in \F_{i-1}$,   $e_i \in \mathbb{Z}_{>1}$, is a field extension of $\F_{i-1}$ with $[\F_i:\F_{i-1}]=e_i$.}  \\
\text{$\F_{i}^{*}=\F_{i-1}^{*}(\delta_{i}^{*})$, 
$(\delta_{i}^{*})^{e_{i}^{*}} = \alpha_{i}^{*} \in \F_{i-1}^{*}$, 
 $e_{i}^{*} \in \mathbb{Z}_{>1}$, is a field extension of $\F_{i-1}^{*}$ with  $[\F_{i}^*:\F_{i-1}^*]=e_{i}^*$.
}
\end{array}
\end{equation*}
\caption{$\T^*\preceq  \T$}
\label{fig:torres}
\end{diagram}

\begin{definition}\label{def-dominancia}
 Let $\T$ and $\T^*$ be two radical towers as in  Diagram \ref{fig:torres},  with $\F_k^{*}\subseteq \F_m$. We say that the tower $\T$ \textsf{dominates}  $\T^*$ and
 we write $\T^* \preceq  \T$. We say that  $(\T,\cP)$ \textsf{dominates} $ (\T^*,\cP)$, and we write $(\T^*,\cP)\preceq (\T,\cP)$,  
 if $\T^* \preceq \T$ and $\cP\in \F_{k}^{*r}\subseteq \F_{m}^{r}$.
\end{definition}

In Def.~\ref{def-dominancia},  
the inclusion
$\mathbb{F}^*_k\subseteq\mathbb{F}_m$ is understood through a field embedding
$\gamma:\mathbb{F}^*_k\hookrightarrow\mathbb{F}_m$ whose restriction to $\mathbb{F}_0$
is the identity. This embedding does not need to be unique.  Throughout this section we assume that $\gamma$ is fixed.

 \begin{remark}
 We observe that if $\T^*\subseteq \T$ then $\T^*\preceq \T$, but the converse does not hold.
 \end{remark}

In this situation, we study the relationship among the varieties $\cV_\T$, $\cV_{\T^*}$, $\cV_\cP$, and $ \cV_{\cP}^{*}$, where $\cV_{\T^*}$ and $\cV_{\cP}^{*}$ are the tower and radical varieties of the tower $\T^{*}$

 \begin{theorem}\label{theo:initial maps}
Let  $\T^*\preceq \T$. There exists a unique dominant rational map
  \[\begin{array}{ccc}
  \iota:\cV_{\T} \longrightarrow\cV_{\T^*}
  \end{array}
  \]
  such that for every radical parametrization 
$\cP\in \F_{k}^{* r}$ 
it holds that $\cV_\cP=\cV_{\cP}^{*}$, and
  the following diagram is commutative.

  \begin{equation}\label{eq:diag univ}
\begin{tikzcd}
                    &  & \cV_\T \arrow[d, "\iota"] \arrow[lld, "\cR"'] \arrow[dd, "\widehat{\pi}", bend left, shift left=3] \\
\cV_\cP=\cV_\cP^{*} &  & \cV_{\T^{*}} \arrow[ll, "\cR^{*}"] \arrow[d, "\widehat{\pi}^{*}"']                                 \\
                    &  & \C^n \arrow[llu, "\cP", bend left]                                                                
\end{tikzcd}
  \end{equation}
 Here  $\cR:\cV_\T\rightarrow\cV_\cP$, $\cR^*:\cV_{\T^*}\rightarrow\cV_{\cP}^{*}$, $\widehat{\pi}:\cV_\T\rightarrow\C^n$, and $\widehat{\pi}^{*}:\cV_{\T^*}\rightarrow\C^n$ are as in Diagram \ref{fig:diagram2}.

\end{theorem}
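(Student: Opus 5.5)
The map $\iota$ will be built at the level of function fields, using the fixed embedding $\gamma$. By Proposition \ref{prop- new E}, the coordinate ring of $\mathscr{Y}_\T\cap U_\T$ is the domain $\C[\oT,\oD]_{\alpha_{\mathrm{D}}}/\E$, whose fraction field is $\F_m$. Likewise the fraction field of $\cV_{\T^*}$ is $\F_k^*$, generated over $\F_0$ by the classes of $\Delta_1^*,\ldots,\Delta_k^*$.

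\textbf{Construction of $\iota$.} Each element $\gamma(\Delta_j^*)\in\F_m$ is written in canonical form, with numerator $c_{j,\mathrm{N}}(\oT,\oD)$ and denominator $c_{j,\mathrm{D}}(\oT)$. We then set
\[
\iota(\ot,\od)=\left(\ot,\frac{c_{1,\mathrm{N}}(\ot,\od)}{c_{1,\mathrm{D}}(\ot)},\ldots,\frac{c_{k,\mathrm{N}}(\ot,\od)}{c_{k,\mathrm{D}}(\ot)}\right).
\]
Because the denominators depend only on $\oT$, the argument at the end of Proposition \ref{prop- new E} shows that they do not vanish identically on $\cV_\T$. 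Hence $\iota$ is defined on a dense open subset.

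\textbf{Image lands in $\cV_{\T^*}$.} For a generic point, $E_j^*(\iota(\ot,\od))$ is the evaluation of $\gamma(E_j^*(\oT,\Delta^*))=0$ in $\F_m$. Via the injection of Lemma \ref{lemma- loc injection}, this means the corresponding polynomial lies in $\E$ after localization, so it vanishes on $\mathscr{Y}_\T\cap U_\T$. The image therefore lies in $\mathscr{Y}_{\T^*}$, and on the open set $U_{\T^*}$ (again defined only by $\oT$) it lies in $\cV_{\T^*}$ by Proposition \ref{prop- new E}. Equivalently, $\iota$ is the rational map whose pullback $\iota^\sharp\colon \C(\cV_{\T^*})=\F_k^*\to\F_m=\C(\cV_\T)$ is $\gamma$. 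Since $\gamma$ is an injective field homomorphism, $\iota$ is dominant.

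\textbf{Uniqueness and $\widehat{\pi}^*\circ\iota=\widehat{\pi}$.} Commutativity of the triangle with $\widehat{\pi}$ and $\widehat{\pi}^*$ says that the pullback restricts to the identity on $\F_0$. Commutativity with $\cR$ and $\cR^*$ for all $\cP$ forces $\iota^\sharp(\Delta_j^*)=\gamma(\Delta_j^*)$. Here I would use the universal parametrization $\cPU^*$ of $\T^*$ regarded as a parametrization in $\F_m$: its $\cR^*$ is essentially the identity (Theorem \ref{theo: univ para var} and the remark after it), so $\cR=\cR^*\circ\iota$ pins down $\iota$. Since a dominant rational map between irreducible varieties is determined by its pullback, $\iota$ is unique.

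\textbf{Radical varieties and the $\cR$ triangle.} For $\cP\in\F_k^{*r}$, the map $\cR^*$ is given by the coordinates $p_i(\oT,\Delta^*)$ and $\cR$ by $\gamma(p_i)$. The composition $\cR^*\circ\iota$ has pullback $\gamma\circ(\cR^*)^\sharp=\cR^\sharp$, so $\cR=\cR^*\circ\iota$ as rational maps. Then $\cV_\cP=\overline{\cR(\cV_\T)}=\overline{\cR^*(\iota(\cV_\T))}=\overline{\cR^*(\cV_{\T^*})}=\cV_\cP^*$, using dominance of $\iota$. The analytic triangle $\cP=\cR\circ\psi$ holds by the convention on principal branches, and it is compatible with $\cR^*\circ\psi^*$. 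This is the step I expect to be the main obstacle: one must check that $\iota\circ\psi=\psi^*$ on a Euclidean open set, i.e.\ that the branch choices made by the principal roots are consistent with the embedding $\gamma$. Otherwise $\cP\circ$ might differ by roots of unity. I would try to handle this by interpreting the convention, namely that the towers are built so that the desired branches live in the fields, as saying that $\gamma$ is realized by the chosen analytic functions.
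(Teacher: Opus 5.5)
Your proposal is correct and follows the paper's proof essentially step by step. You build $\iota$ from the canonical forms of $\gamma(\Delta_j^*)$ (the paper's $g_i$), and get dominance from the injectivity of $\gamma$ on the localized coordinate rings; your explicit check via Lemma \ref{lemma- loc injection} and Proposition \ref{prop- new E} replaces the paper's appeal to Hartshorne. The identities $\cR=\cR^*\circ\iota$ and $\widehat{\pi}=\widehat{\pi}^*\circ\iota$ follow from $\gamma$ and $\gamma|_{\F_0}=\mathrm{id}$, $\cV_\cP=\cV_{\cP}^{*}$ follows from dominance, and uniqueness comes from the universal parametrization of $\T^*$, exactly as in the paper.

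The branch issue you flag is not something the paper proves: its proof only establishes those two algebraic identities, and the arrow $\cP$ is read as loosely as in Diagram \ref{fig:diagram2}. You should not try to prove $\iota\circ\psi=\psi^*$, because it fails for a general fixed $\gamma$. For $\T=\T^*=[\C(T)\subsetneq\C(T)(\sqrt{T})]$ with $\gamma(\Delta^*)=-\Delta$, one gets $\iota\circ\psi(t)=(t,-\sqrt{t})\neq\psi^*(t)$.
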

\begin{proof}
We can write the fields $\F_{k}^{*}$ and $\F_m$ as  (see \eqref{eq:field desc})
\[\begin{array}{ccc}
\F_{k}^{*}=\F_0[\Delta_{1}^*,\ldots,\Delta_{k}^*]/\E^*& \text{ and }&\F_m=\F_0[\Delta_1,\ldots,\Delta_m]/\E.
\end{array}
\]
We represent the inclusion of field extensions of {$\F_k^*/\F_0$ in $\F_m/\F_0$} as the injective map 
\[
\gamma:\F_k^*=\F_0[\Delta_{1}^{*},\ldots,\Delta_{k}^{*}]/\E^*
\hookrightarrow  \F_m=\F_0[\Delta_1,\ldots,\Delta_m]/\E
\]
 whose restriction to $\F_0$ is the identity. Let $\gamma(\Delta_i^{*})=g_i(\overline{\Delta})$ where $g_i\in\F_0[\overline{\Delta}]$. In particular, $g_i$ may be written as
 \begin{equation}\label{eq:dominan map gen}
 g_i=\frac{g_{iN}(\overline{T},\overline{\Delta})}{g_{iD}(T)} \ \text{ for }g_{iD}\in\C[\overline{T}] \ \text{ and }\ g_{iN}\in\C[\overline{T},\overline{\Delta}].
 \end{equation}
 Let $\alpha_{\mathrm{D}}=\alpha_{1,\mathrm{D}}(\oT)\cdots \alpha_{m,\mathrm{D}}(\oT)\alpha_{1,\mathrm{D}}^{*}(\oT)\cdots \alpha_{k,\mathrm{D}}^{*}(\oT)$ and let $U'$ be the Zariski-open subset defined by $\alpha_{\mathrm{D}}\neq 0$. In other words, $U'=U_\T\cap U_{\T^{*}}$ where $U_\T $ is as in Section \ref{sec:refinamient}.
Then, $\gamma$ after localization, restricts  to the ring injection
 $$
 \gamma: \left(\C[\oT][\Delta_{1}^{*},\ldots,\Delta_{k}^{*}]/\E^*\right)_{\alpha_{\mathrm{D}}}\hookrightarrow (\C[\oT][\Delta_{1},\ldots,\Delta_{m}]/\E)_{g_D\cdot\alpha_{\mathrm{D}} },
 $$ 
 where $g_D=g_{1D} g_{2D}\cdots g_{kD}$ and $(\C[\oT][\Delta_{1},\ldots,\Delta_{m}]/\E)_{g_D \cdot \alpha_{\mathrm{D}}}$ is the localization of $\C[\oT][\Delta_{1},\ldots,\Delta_{m}]/\E$ by $g_D\cdot \alpha_{\mathrm{D}}$ (see \eqref{eq:dominan map gen}).
By  \cite[Ch. I, Prop. 3.5 and Cor. 3.7]{Hartshorne}, this map yields a dominant morphism $$\iota:\V_\T\cap U''\to\V_{\T^*}\cap U',$$ where $U''=U'\cap\{g_{\mathrm D}(\oT)\neq0\}$.  Note that $\cV_{\T}\cap U''$ and $\cV_{\T^*}\cap U'$ are Zariski-dense in $\cV_{\T}$ and $\cV_{\T^*}$ respectively. 
Diagram \eqref{eq:diag univ} commutes since in terms of the rings, $\cR^*\circ \iota$ writes $p_1,\ldots,p_r$ of $\cR^{*}$ as elements of $\F_m$ using the polynomials \eqref{eq:dominan map gen}. 
Moreover, $\widehat{\pi}=\widehat{\pi}^{*}\circ\iota$ as $\gamma(T_i)=T_i$ for every $i=1,\ldots,n$. 
As in Theorem \ref{thm:subtower var}, using the commutativity of  Diagram \eqref{eq:diag univ}, the fact that $\iota $ is dominant and  that $\cV_\cP$  is irreducible,  one gets   that 
\[\cV_\cP=\overline{\cR(\cV_{\T})}=\overline{(\cR^*\circ\iota)(\cV_{\T})}=\overline{ \cR^*(\cV_{\T^*})}=\cV_{\cP}^*.\]

Now, assume that there exists $\iota':\cV_\T\dashrightarrow\cV_{\T^*}$ with the same property as $\iota$.  Let $\cP_\cU$ be the universal radical parametrization of $\T^*$. Applying the property of $\iota'$ to $\cP_\cU$, the following diagram commutes
  \[
  \begin{tikzcd}
\cV_{\T} \arrow[d, "\iota'"'] \arrow[rr, "\cR"] &  & \cV_{\cP_\cU}=\cV_{\cP_\cU}^{*} \\
\cV_{\T^*} \arrow[rru, "\cR^*"']             &  &             
\end{tikzcd}
  \]
By Theorem \ref{theo: univ para var}, $\cV_{\cP_\cU}=\cV_{\T^*}$ and $\cR=\iota'$. Now, in terms of the coordinates, $\cR$ is the map given by $(t_1,\ldots,t_n,\delta_{1}^{*},\ldots,\delta_{k}^{*})$. Therefore, in terms of the rings, $\cR$ is given by the map $\gamma$. We conclude that $\iota $ and $\iota'$ are equal as rational maps. 
\end{proof}

One can generalize Theorem \ref{theo:initial maps} involving the incidence variety as follows.

\begin{corollary}\label{co: univ maps}
      Let  $\T^*\preceq \T$ as in Diagram \ref{fig:torres}, and let $\iota$ be the same map as in 
    Theorem \ref{theo:initial maps}. Then,  for every radical parametrization 
$\cP\in \F_{k}^{* r}$
    there exists a unique dominant rational map $\eta:\cA\rightarrow\cA^*$ such that the following diagram commutes
\[
\begin{tikzcd}
\cA \arrow[dd, "\pi^{*}"'] \arrow[rd, "\pi"] \arrow[rr, "\eta", ] &                  & \cA^* \arrow[dd, "(\pi^{*})^*"] \arrow[ld, "(\pi)^*"'] \\
                                                                        & \cV_\cP=\cV_\cP^* &                                                   \\
\cV_\T \arrow[ru, "\cR"'] \arrow[rr, "\iota"']                  &                  & \cV_{\T^*} \arrow[lu, "\cR^*"]                     
\end{tikzcd}
    \]
    where $(\pi)^*$ and $(\pi^{*})^{*}$ are the analogous maps to $\pi$ and $\pi^{*}$ in  Diagram \ref{fig:diagram2} but for the radical tower $\T^*$.
\end{corollary}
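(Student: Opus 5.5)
The plan is to build $\eta$ by composing maps that are already available, and to get uniqueness from the birationality of $\pi^*$. By the proof of Theorem \ref{thm: incidence var}, $\pi^*$ restricts to an isomorphism $\cA\cap U_\cP\to \mathscr{Y}_\T\cap U_\cP$. Its inverse is $(\ot,\od)\mapsto(\ot,\od,p_1(\ot,\od),\ldots,p_r(\ot,\od))$. By Proposition \ref{prop- new E} the target is a dense open subset of $\cV_\T$, so $\pi^*:\cA\dashrightarrow\cV_\T$ is birational. The same holds for $(\pi^*)^*:\cA^*\dashrightarrow\cV_{\T^*}$, with inverse $(\ot,\od^*)\mapsto(\ot,\od^*,\cP(\ot,\od^*))$, where $\cP$ is now read in $\F_k^{*}$. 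We then define
\[
\eta:=((\pi^*)^*)^{-1}\circ\iota\circ\pi^*:\cA\dashrightarrow\cA^*,
\]
with $\iota$ the map of Theorem \ref{theo:initial maps}.

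\textbf{Well-definedness and dominance.} By Theorem \ref{theo:initial maps}, $\iota$ is dominant. The maps $\pi^*$ and $(\pi^*)^*$ are birational between irreducible varieties (Theorem \ref{thm: incidence var}, Corollary \ref{cor-VT}). Hence the composition is defined on a dense open subset of $\cA$ and is dominant. The one point to check is that the image of a dense open set under $\iota$ meets the domain of $((\pi^*)^*)^{-1}$. That domain is $U_{\cP^*}\cap\cV_{\T^*}$, a dense open set given by nonvanishing of polynomials in $\oT$ only, so this follows from dominance of $\iota$.

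\textbf{Commutativity.} The bottom triangle $\cR=\cR^*\circ\iota$ is Theorem \ref{theo:initial maps}. The side triangles $\pi=\cR\circ\pi^*$ and $(\pi)^*=\cR^*\circ(\pi^*)^*$ hold by definition of $\cR$ and $\cR^*$ (Diagram \ref{fig:diagram2}). The square commutes by construction, since $(\pi^*)^*\circ\eta=\iota\circ\pi^*$ on a dense open set. The upper triangle then follows:
\[
(\pi)^*\circ\eta=\cR^*\circ(\pi^*)^*\circ\eta=\cR^*\circ\iota\circ\pi^*=\cR\circ\pi^*=\pi .
\]
Concretely, $\eta(\ot,\od,\ox)=(\ot,g_1(\ot,\od),\ldots,g_k(\ot,\od),\ox)$, with $g_i$ as in \eqref{eq:dominan map gen}. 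Here the $\ox$-coordinate is preserved because $\cP$ evaluated through $\gamma$ coincides with $\cP$ in $\F_m$.

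\textbf{Uniqueness.} Suppose $\eta'$ also makes the diagram commute. Then $(\pi^*)^*\circ\eta'=\iota\circ\pi^*$. Applying the birational inverse $((\pi^*)^*)^{-1}$ gives $\eta'=\eta$ as rational maps, since they agree on a dense open subset of the irreducible variety $\cA$.

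The main obstacle is the domain bookkeeping: making sure the composition is genuinely defined on a dense open set. I would handle it as in the proofs of Theorems \ref{thm:subtower var} and \ref{theo:initial maps}. Every excluded locus is cut out by polynomials in $\oT$, and such polynomials cannot vanish identically on a variety whose function field contains $\F_0$.
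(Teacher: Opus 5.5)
Your proposal is correct and takes the same approach as the paper. The paper's proof is the one-line remark that the corollary follows from Theorem \ref{theo:initial maps} together with the birationality of $\pi^*$ and $(\pi^*)^*$, which is exactly your construction $\eta=((\pi^*)^*)^{-1}\circ\iota\circ\pi^*$, with uniqueness forced by the commuting square; you supply the domain bookkeeping and the explicit coordinate formula that the paper leaves implicit.
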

\begin{proof}
    The proof follows from Theorem \ref{theo:initial maps} and the birationality of $\pi^{*}$ and $(\pi^{*})^{*}$.
\end{proof}

\begin{remark}
As shown in the proof of Theorem \ref{theo:initial maps}, the dominant map $\iota$ does not depend on the parametrization and it can be explicitly constructed by the polynomials $g_i$ in \eqref{eq:dominan map gen}. These polynomials are obtained by expressing the radicals $\Delta_1^*,\ldots,\Delta_k^*$ of the tower $\T^*$ by means of the radicals $\Delta_1,\ldots,\Delta_m$ of the tower $\T$. 
\end{remark}

\begin{example}\label{ex: non iso tower var 1}
Take the two distinct towers $\T:=[\F_0\subsetneq\F_1=\F_0(\sqrt{T})]$ and $\T^*:=[ \F_0\subsetneq\F_1^*=\F_0(\frac{1}{\sqrt{T}})]$. 
Note that $\F_1=\F_1^*$, but they are constructed differently as
\[
\F_1=\F_0[\Delta]/\langle\Delta^2-T\rangle\ \text{ and } \ 
\F_1^*=\F_0[\Delta^*]/\langle(\Delta^{*})^2-\frac{1}{T}\rangle.
\]
The coordinate rings of the tower variety of $\T$ and $\T^*$ are
\[
\C[T,\Delta]/\langle \Delta^2-T\rangle \ \text{ and } \ \C[T,\Delta^*]/\langle T(\Delta^*)^2-1\rangle,
\]
respectively. These two rings are non-isomorphic. However, the tower varieties of each tower are birational. Such birational map is given by 
\[
\begin{array}{ccc}
    \cV_\T &\longleftrightarrow& \cV_{\T^*}  \\
     (T,\Delta)& \mapsto &(T,\frac{\Delta}{T})\\
     (T,T\Delta^*)& \leftmapsto & (T,\Delta^*)   
\end{array}\ 
\]
Note that both maps are birational.
\end{example}

Example \ref{ex: non iso tower var 1} shows that two codominated towers (towers with the same last field) may lead to distinct tower varieties. The pathology in this case arises from allowing denominators in the construction of the tower. In other words, there exists an $\alpha_i$ with a denominator in $\overline{T}$. A natural question is what happens when no $\alpha_i$ has denominators, i.e. $\alpha_i\in\C[\overline{T},\overline\Delta]$ for every $i$. The following example shows that even in this case, the tower varieties of codominated tower might not be isomorphic.

\begin{example}
Consider the towers $\T:=[\F_0\subsetneq\F_1=\F_0(\sqrt{T})]$ and $\T^*:=[ \F_0\subsetneq\F_1^*=\F_0(\sqrt{T^3})]$. 
Note that the two fields $\F_1$ and $\F_1^*$ are the same, but they are constructed differently as
\[
\F_1=\F_0[\Delta]/\langle\Delta^2-T\rangle\ \text{ and } \ 
\F_1^*=\F_0[\Delta^*]/\langle(\Delta^*)^2-T^3\rangle.
\]
The coordinate ring of the tower varieties of $\T$ and $\T^*$ are
\[
\C[T,\Delta]/\langle \Delta^2-T\rangle \ \text{ and } \ \C[T,\Delta^*]/\langle (\Delta^*)^2-T^3\rangle,
\]
respectively. These two rings are non-isomorphic. However, the tower varieties of each tower are birational. Such birational map is given by 
\[
\begin{array}{ccc}
    {\cV}_\T &\dashrightarrow& {\cV}_{\T^*}  \\
     (T,\Delta)& \mapsto &(T,T\Delta)\\
     (T,\frac{\Delta^*}{T})& \leftmapsto & (T,\Delta^*)
     
\end{array}\ 
\]
\end{example}

\begin{remark}
    As shown in these examples, there is no  canonical choice of tower variety among distinct but very similar radical towers with same last field. Since the fraction field of all these possible tower varieties is the same (that last field), they are all birational. However, thanks to Theorem \ref{theo:initial maps} such birationality is slightly stronger. Let $\T$ and $\T^*$ be codominated towers. Then,  there exist unique dominant maps $\iota:\cV_{\T}\dashrightarrow\cV_{\T^*}$ and $\iota^*:\cV_{\T^*}\dashrightarrow\cV_{\T}$ as in Theorem \ref{theo:initial maps}. By uniqueness, $\iota\circ\iota^*=\mathrm{Id}$ and vice versa. This birational map restricted to the variables $\overline{T}$ is the identity. Therefore, the relation between these two distinct tower varieties is stronger than being just birational.
\end{remark}

\begin{corollary}\label{co: unirat}
Let $\T$ be a radical tower such that $\cV_\T$ is unirational. Then,  $\cV_{\T^*}$ is unirational for every $\T^*\preceq \T$. Moreover,  $\cV_\cP$ is unirational for any parametrization $\cP\in\F_m^r$.
\end{corollary}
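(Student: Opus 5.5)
The plan is to compose unirational parametrizations with the dominant rational maps already available. Let $\mathscr{T}:\C^{s}\dashrightarrow \cV_\T$ be a dominant rational map witnessing the unirationality of $\cV_\T$; we may take $s=n=\dim\cV_\T$ by Corollary \ref{cor-VT}, though this is not needed. For $\T^*\preceq\T$, Theorem \ref{theo:initial maps} provides a unique dominant rational map $\iota:\cV_\T\dashrightarrow\cV_{\T^*}$. I will consider $\iota\circ\mathscr{T}:\C^s\dashrightarrow\cV_{\T^*}$ and argue that it is a well-defined dominant rational map. The composition of dominant rational maps between irreducible varieties is defined and dominant, because the image of $\mathscr{T}$ is dense in $\cV_\T$ and therefore meets the domain of definition of $\iota$. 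Here $\cV_\T$ and $\cV_{\T^*}$ are irreducible by Corollary \ref{cor-VT}. This composition shows that $\cV_{\T^*}$ is unirational.

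For the second claim, let $\cP\in\F_m^r$ be a radical parametrization. By Remark \ref{Remark-first}(\ref{Rem2}), the composition $\cR\circ\mathscr{T}$ is a unirational parametrization of $\cV_\cP$. Alternatively, this follows from the dominance of $\cR:\cV_\T\dashrightarrow\cV_\cP$ in Diagram \ref{fig:diagram2}. The tower variety used there is the one attached to $(\T,\cP)$, and by Corollary \ref{co: no need of para} it coincides with $\cV_\T$ for every $\cP$. Hence the same $\mathscr{T}$ serves for all parametrizations. The same argument applied to $\T^*$ also gives the unirationality of every $\cV^*_{\cP}$ with $\cP\in\F_k^{*r}$; by Theorem \ref{theo:initial maps} this variety equals $\cV_\cP$ in any case.

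The only delicate point is the composability of the rational maps. Specifically, we need $\mathscr{T}(\C^s)$ not to lie in the closed set where $\iota$, or respectively $\cR$, is undefined. In the proof of Theorem \ref{theo:initial maps}, $\iota$ is defined on the dense open set $\cV_\T\cap U''$, and a dominant $\mathscr{T}$ has dense image. Since $\cV_\T$ is irreducible, the preimage $\mathscr{T}^{-1}(U'')$ is a nonempty open set, so the composition is defined there. Its image is dense because the image of a dense set under a dominant map between irreducible varieties is dense.
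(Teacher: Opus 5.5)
Your proposal is correct and takes essentially the paper's route. The paper's proof composes a unirational parametrization of $\cV_\T$ with the dominant map $\iota$ of Theorem~\ref{theo:initial maps}, and handles the radical varieties through the commutative Diagram~\eqref{eq:diag univ}, where $\cR=\cR^*\circ\iota$; your added check on composing dominant rational maps between irreducible varieties spells out what the paper leaves implicit.
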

\begin{proof}
    The proof follows from Diagram \ref{eq:diag univ} and the dominant map $\iota$.
\end{proof}

\begin{remark} Note that if the dimension of $\cV_\T$ is $\leq 2$, by  L\"uroth's theorem and Castelnuovo's theorem, in Corollary \ref{co: unirat} we may replace unirationality by rationality. 
   Example \ref{ex:nonluroth} shows that this is no longer true for $n\geq 3$, as Castelnuovo's Theorem does not hold.
\end{remark}

The following example exhibits a radical variety that is unirational, not rational, but the tower variety is rational.
 
\begin{example}\label{ex:nonluroth}
Consider the radical tower
\[
\T:=\left[ \begin{array}{c}
\F_0=\C(T_1,T_2,T_3)\subsetneq\F_1=\F_0[\Delta_1]/\langle \Delta_1^2-T_1\rangle
\subsetneq
\\
\F_2=\F_1[\Delta_2]/\langle \Delta_2^3-T_2\rangle
\subsetneq\F_3=\F_2[\Delta_3]/\langle \Delta_3^2-T_3\rangle
\end{array}
\right].
\]
The tower variety is 
\[
\cV_\T=\V(  \Delta_1^2-T_1,\Delta_2^3-T_2,\Delta_3^2-T_3)\subset\C^6,
\]
which is rational and can be parametrized by 
\[
\mathscr{T}:\C^3\longrightarrow\cT,\ \,(s_1,s_2,s_3)\longmapsto(s_1^2,s_2^3,s_3^2,s_1,s_2,s_3). 
\]
Now, consider the radical parametrization $\cP(\oT,\overline{\Delta})=(p_1,p_2,p_3,p_4)\in\F_3^4$ given by 
\[
\begin{array}{cc}
  p_1=\dfrac{\Delta_1(-1+T_3(1+T_2-T_1^3))}{1+T_3(1+T_2-T_1^3)}, &   p_2=  \dfrac{2\Delta_1\Delta_3T_1}{1+T_3(1+T_2-T_1^3)},
\\
\noalign{\smallskip}
  p_3 = \dfrac{-2\Delta_1\Delta_3}{1+T_3(1+T_2-T_1^3)},
 & 
p_4=\dfrac{-2\Delta_1\Delta_2\Delta_3}{1+T_3(1+T_2-T_1^3)}.
\end{array}
\]
that corresponds to  the radical variety $\cV_\cP=\V(x_2+x_1^2x_3+x_2^3+x_3^3+x_4^3)$ in $\C^4$. Using Remark \ref{Remark-first} (\ref{Rem2}), we have that 
\[ \cR\circ\mathscr{T}= \left(
\dfrac{s_{1} \left(1+\left(s_{1}^{6}-s_{2}^{3}-1\right) s_{3}^{2}\right)}{-1+\left(s_{1}^{6}-s_{2}^{3}-1\right) s_{3}^{2}}
, 
-\dfrac{2 s_{1}^{3} s_{3}}{-1+\left(s_{1}^{6}-s_{2}^{3}-1\right) s_{3}^{2}}
, 
 \right. \]
\[\left. \dfrac{2 s_{1} s_{3}}{-1+\left(s_{1}^{6}-s_{2}^{3}-1\right) s_{3}^{2}}
, 
\dfrac{2 s_{1} s_{2} s_{3}}{-1+\left(s_{1}^{6}-s_{2}^{3}-1\right) s_{3}^{2}}
\right)
 \]
parametrizes $\cV_\cP$. The degree of the map induced by the rational parametrization is 2, and hence $\cV_\cP$ is unirational.

The homogenization of $\cV_\cP$ yields  a smooth cubic threefold in $\P^4$. By \cite[Theorem 0.12]{CG72}, smooth cubic threefolds  in $\P^4$ are not rational and, hence,  $\cV_\cP$ is not rational. By Corollary \ref{co: unirat}, $\cV_\T$ is rational and  every radical variety of a radical parametrization in $\F_3^r$ is unirational. However, the radical variety might not be rational, as shown in the example.
\end{example}

\begin{definition}\label{def-strong}
    We say that a parametrization $(\T,\cP)$ is \textsf{strong} if the fraction field of $\cV_\cP$ is $\F_m$.
\end{definition}

\begin{proposition}
    A strong parametrization is minimal.  
\end{proposition}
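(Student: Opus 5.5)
The plan is to argue by contradiction using the equality of radical varieties for subtowers, Theorem \ref{thm:subtower var}, together with a degree count on function fields. Let $(\T,\cP)$ be strong, so the fraction field of $\cV_\cP$ is $\F_m$. Let $\mT=\T_R$ be the minimal radical subtower of $\cP$ given by Corollary \ref{cor:minimal tower}, with last field $\F_R'\subseteq \F_m$. We must show $R=\{1,\ldots,m\}$, i.e.\ $\F_R'=\F_m$, so that $(\T,\cP)$ coincides with $(\mT,\cP)$ (up to the reordering allowed by Proposition \ref{prop: no order}).

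First I would make precise what the fraction field of $\cV_\cP$ is as a subfield of $\F_m$. The map $\cR=\pi\circ(\pi^*)^{-1}:\cV_\T\dashrightarrow\cV_\cP$ is dominant. Since $\C(\cV_\T)=\F_m$ by Proposition \ref{prop- new E}, the pullback $\cR^\sharp$ identifies $\C(\cV_\cP)$ with the subfield $\C(p_1,\ldots,p_r)\subseteq\F_m$, because $\cR^\sharp(x_i)=p_i$. Strongness therefore says exactly $\C(p_1,\ldots,p_r)=\F_m$.

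Next, since each $p_i\in\F_R'$ and $\F_R'$ is a field containing $\C$, we get $\C(p_1,\ldots,p_r)\subseteq \F_R'\subseteq\F_m$. Combined with strongness this gives $\F_R'=\F_m$. The degrees then satisfy $[\F_R':\F_0]=\prod_{i\in R}e_i$ and $[\F_m:\F_0]=\prod_{i=1}^m e_i$, with every $e_i>1$. Equality of the fields forces equality of these products, hence $R=\{1,\ldots,m\}$. Therefore $\mT\equiv\T$, and the parametrization is minimal.

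The main obstacle is the first step: rigorously justifying that $\C(\cV_\cP)$ embeds in $\F_m$ as $\C(p_1,\ldots,p_r)$. This relies on dominance of $\cR$, which follows from Remark \ref{Remark-first} and Diagram \ref{fig:diagram2}. It also requires reading Definition \ref{def-strong} as an equality of subfields of $\F_m$ via $\cR^\sharp$, rather than as an abstract isomorphism. If it is read abstractly, I would instead use the fact that $\F_m$ is a finite extension of $\C(p_1,\ldots,p_r)$ of finite degree. An abstract isomorphism $\C(p_1,\ldots,p_r)\cong\F_m$ together with $\C(p)\subseteq\F_R'\subseteq\F_m$ does not immediately give equality, so the subfield interpretation is the one to adopt.
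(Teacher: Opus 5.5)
Your proof is correct and follows essentially the paper's argument. The paper also obtains the chain $\K\subseteq\F_k^*\subseteq\F_m$ (via dominance of the maps in Diagram \eqref{eq:diag univ}, which amounts to your observation that $\cR^\sharp(x_i)=p_i\in\F_R'$) and concludes from strongness that all three fields coincide; your degree count $\prod_{i\in R}e_i=\prod_{i=1}^m e_i$ makes explicit the step $\F_k^*=\F_m\Rightarrow\T^*=\T$ that the paper leaves implicit, and your subfield reading of Definition \ref{def-strong} is the one the paper's proof tacitly uses.
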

\begin{proof}
Let $\F_k^{*}$ be the last field of the minimal radical tower $\T^{*}$ of $\cP$ and let $\K$ be the fraction field of $\cV_\cP$. Using that every map in Diagram \ref{eq:diag univ} is dominant, we get the inclusions $\K\subseteq\F_k^*\subseteq\F_m$. Since $\cP$ is  strong, we conclude that $\K=\F_k^*=\F_m$ and that $\T^*=\T$.
 \end{proof}

\begin{proposition}
    The universal parametrization is strong.
\end{proposition}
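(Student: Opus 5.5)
The plan is to compute the fraction field of $\mathscr{V}_{\cPU}$ directly from the isomorphisms in Theorem \ref{theo: univ para var}. By Definition \ref{def-strong}, we must show that the fraction field of $\mathscr{V}_{\cPU}$ equals $\F_m$. By Theorem \ref{theo: univ para var}, $\mathscr{V}_{\cPU}$ is isomorphic to $\cV_\T$. This isomorphism is explicitly $\cR=\pi\circ(\pi^*)^{-1}$, which sends $(\ot,\od)\mapsto(\ot,\od)$, so it is essentially the identity after renaming $X_i\leftrightarrow T_i$ and $X_{n+j}\leftrightarrow\Delta_j$. Isomorphic irreducible varieties have isomorphic function fields. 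It therefore suffices to identify the function field of $\cV_\T$, compatibly with how $\F_m$ sits over $\F_0$.

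For the function field of $\cV_\T$ we use the proof of Proposition \ref{prop- new E}. The coordinate ring of $\cV_\T\cap U_\T=\mathscr{Y}_\T\cap U_\T$ is $\C[\oT,\oD]_{\alpha_{\mathrm{D}}}/\widehat{\E}$. By Lemma \ref{lemma- loc injection}, this ring injects into $\F_m=\F_0[\oD]/\widehat{\E}$. Its fraction field is all of $\F_m$, since it contains $\C[\oT]$ and the classes of $\Delta_1,\ldots,\Delta_m$, and these generate $\F_m$ as a field. Since $\cV_\T\cap U_\T$ is a dense open subset of $\cV_\T$, the fraction field of $\cV_\T$ is $\F_m$.

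Transporting through the isomorphism, the coordinate functions $x_1,\ldots,x_n,x_{n+1},\ldots,x_{n+m}$ on $\mathscr{V}_{\cPU}$ correspond to $T_1,\ldots,T_n,\Delta_1,\ldots,\Delta_m$, that is, to the components of $\cPU$. Hence the fraction field of $\mathscr{V}_{\cPU}$ is $\C(\oT,\od)=\F_m$, with the natural identification induced by $\cPU$, and the universal parametrization is strong.

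The main point needing care is that "fraction field of $\cV_\cP$ is $\F_m$" should mean that the pullback $\cR^\sharp:\C(\cV_\cP)\hookrightarrow\F_m$, given by $x_i\mapsto p_i$, is surjective, rather than an abstract isomorphism. For $\cPU$ this map sends $x_i\mapsto T_i$ and $x_{n+j}\mapsto\Delta_j$. Its image therefore contains the generators $\oT,\Delta_1,\ldots,\Delta_m$ of $\F_m$ over $\C$, so surjectivity is immediate and this concern is resolved without further work.
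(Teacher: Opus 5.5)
Your proof is correct and follows the paper's route. The paper's one-line proof is exactly this: the isomorphism $\mathscr{V}_{\cPU}\cong\cV_\T$ of Theorem \ref{theo: univ para var}, combined with the fact from the proof of Proposition \ref{prop- new E} that the function field of $\cV_\T$ is $\F_m$; your explicit check that the pullback $x_i\mapsto T_i$, $x_{n+j}\mapsto\Delta_j$ maps onto $\F_m$ fills in the identification the paper leaves implicit, and is in fact sufficient on its own.
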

\begin{proof}
It follows from Theorem \ref{theo: univ para var}.
\end{proof}

\begin{corollary}\label{cor-fuerte}
   Let $\cP\in\F_m^r$ be a strong parametrization. Then, $\cV_\cP$ is rational (resp. unirational) if and only if $\cV_\T$ is rational (resp. unirational).
\end{corollary}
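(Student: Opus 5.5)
The plan is to show that, for a strong parametrization, the rational map $\cR:\cV_\T\dashrightarrow\cV_\cP$ of Diagram \ref{fig:diagram2} is birational. Once this is done, both equivalences follow at once, since rationality and unirationality are birational invariants of irreducible varieties.

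\emph{Step 1: function fields.} By Proposition \ref{prop- new E}, $\cV_\T$ is irreducible and its function field is $\C(\cV_\T)=\F_m$. By Remark \ref{Remark-first}, $\cV_\cP$ is irreducible of dimension $n$. Its function field is generated by the coordinate functions $x_1,\ldots,x_r$.

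\emph{Step 2: the pullback of $\cR$.} The map $\cR=\pi\circ(\pi^*)^{-1}$ is dominant. It sends $(\ot,\od)$ to $(p_1(\ot,\od),\ldots,p_r(\ot,\od))$. Its comorphism is the field embedding $\cR^\sharp:\C(\cV_\cP)\hookrightarrow\F_m$ given by $x_i\mapsto p_i$, so its image is $\C(p_1,\ldots,p_r)\subseteq\F_m$. The definition of strong (Definition \ref{def-strong}) must be read in exactly this sense: the fraction field of $\cV_\cP$, seen inside $\F_m$ through $\cR^\sharp$, is all of $\F_m$. Hence $\cR^\sharp$ is an isomorphism $\C(\cV_\cP)\cong\F_m=\C(\cV_\T)$.

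\emph{Step 3: conclusion.} A dominant rational map between irreducible varieties whose comorphism is an isomorphism of function fields is birational (the equivalence of categories in \cite[Ch. I, Cor. 4.5]{Hartshorne}). Therefore $\cV_\cP$ and $\cV_\T$ are birationally equivalent.

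Suppose $\cV_\T$ is rational. Composing a birational parametrization of $\cV_\T$ with $\cR$ gives a birational parametrization of $\cV_\cP$, and conversely using $\cR^{-1}$. The unirational case works the same way: compose a dominant map $\C^n\dashrightarrow\cV_\T$ with $\cR$, or a dominant map $\C^n\dashrightarrow\cV_\cP$ with $\cR^{-1}$. The direction ``$\cV_\T$ unirational implies $\cV_\cP$ unirational'' already holds without strongness, by Remark \ref{Remark-first}(\ref{Rem2}).

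The only delicate point is Step 2: making precise that ``the fraction field of $\cV_\cP$ is $\F_m$'' means equality as subfields via $\cR^\sharp$, rather than an abstract isomorphism. Everything else is a direct consequence of results stated earlier in the paper.
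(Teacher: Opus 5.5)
Your proposal is correct and is essentially the argument the paper intends; the paper states the corollary without a written proof and later invokes it simply as ``$\cV_\cP$ and $\cV_\T$ are birational''. Strongness gives $\C(\cV_\cP)=\F_m=\C(\cV_\T)$ via $\cR^\sharp$, so $\cR$ is birational and rationality and unirationality transfer. The point you flag as delicate is harmless: your embedded reading is the one the paper itself uses when proving that strong parametrizations are minimal, but for this corollary even an abstract $\C$-isomorphism of function fields would already give birational equivalence by \cite[Ch.~I, Cor.~4.5]{Hartshorne}.
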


The next example shows a minimal parametrization that is not strong. 

 \begin{example}\label{ex-minimal-strong}
Consider the radical tower
\[
\mathbb{T}:=\bigl[\mathbb{F}_0=\mathbb{C}(T_1,T_2)\subsetneq\mathbb{F}_1=\mathbb{F}_0[\Delta_1]/\langle\Delta_1^2-T_1\rangle\subsetneq\mathbb{F}_2=\mathbb{F}_1[\Delta_2]/\langle\Delta_2^2-T_2^3+T_2\rangle\bigr].
\]
The tower variety is $\mathscr{V}_{\mathbb{T}}=\mathbb{V}(\Delta_1^2-T_1,\Delta_2^2-T_2^3+T_2)\subset\mathbb{C}^4$,
which is not rational, not even unirational. Indeed, the projection $(T_1,T_2,\Delta_1,\Delta_2)\mapsto(T_2,\Delta_2)$ maps $\cV_\T$
dominantly onto the elliptic curve $\cV=\mathbb V(\Delta_2^2-T_2^3+T_2)$, 
so if $\cV_\T$ were unirational,
$\cV$ would be unirational 
and hence rational by L\"uroth's theorem, which is impossible since 
$\cV$ has genus one.

Now, consider the minimal radical parametrization $(\mathbb{T},\mathscr{P})$ with
\[
\mathscr{P}=(\Delta_1+T_1^3+T_1,\ \Delta_2\Delta_1-\Delta_2-1,\ \Delta_1)\in\mathbb{F}_2^3.
\]
 Consider the projection $\kappa:\mathscr{V}_{\mathbb{T}}\to\mathbb{C}^3$ given by $(T_1,T_2,\Delta_1,\Delta_2)\mapsto(T_1,\Delta_1,\Delta_2)$, and let $(y_1,y_2,y_3)$ be the coordinates of $\mathbb{C}^3$. Then $\overline{\kappa(\mathscr{V}_{\mathbb{T}})}=\mathbb{V}(y_2^2-y_1)$, which is a rational surface parametrized by $\mathscr{Q}=(a^2,a,b)$. Moreover, we have the commutative diagram
\[
\begin{tikzcd}
\mathscr{V}_{\mathscr{P}} & \mathscr{V}_{\mathbb{T}} \arrow[l, "\mathscr{R}"'] \arrow[d, "\kappa"] \\
 & \kappa(\mathscr{V}_{\mathbb{T}}) \arrow[ul, "\mathscr{R}'"]
\end{tikzcd}
\]
where $\mathscr{R}'$ is the map given by $(y_1,y_2,y_3)\mapsto(y_2+y_1^3+y_1,\ y_3y_2-y_3-1,\ y_2)$. Composing $\mathscr{R}'$ with $\mathscr{Q}$, we obtain
\[
(a,b)\longmapsto\bigl(a+a^2+a^6,\ ab-b-1,\ a\bigr),
\]
which is birational onto its image. Hence $\mathscr{V}_{\mathscr{P}}=\mathbb{V}(x_1-x_3^6-x_3^2-x_3)\subset\mathbb{C}^3$ is rational. Since $\mathscr{V}_{\mathscr{P}}$ is rational and $\mathscr{V}_{\mathbb{T}}$ is not, Corollary \ref{cor-fuerte} shows that $\mathscr{P}$ is not strong. Therefore, $\mathscr{P}$ is a minimal parametrization but not strong.
\end{example}

\section{Radical parametrizations over subfields}\label{sec:subsfields}

Throughout this section, let $\L$ be a subfield of $\C$. Let  $\T$ be as in \eqref{tower}, $\widehat{E}_i$ as in \eqref{eq-Etilde},    $E_i$  as in \eqref{eq-ET} and $\E$ as in \eqref{eq-E}.

\begin{definition}
 We say that the tower
 \[
  \T_\L:= \left[ \F_{0,\L}:=\L(\overline{T})\subsetneq\F_{1,\L}:=\F_{0,\L}[\Delta_1]/\langle E_1\rangle\subsetneq \cdots \subsetneq\F_{m,\L}:=\F_{0,\L}[\Delta_1,\ldots,\Delta_m]/\langle E_1,\ldots,E_m\rangle\right],
    \]
 is  \textsf{$\L$--defined} if    $E_i\in \L[\oT,\oD]$ and irreducible in $\F_{i-1}[\Delta_i]$ for all $i\in \{1,\ldots,m\}$. To emphasize this, we will denote the $\L$--defined tower as $\T_{\L}$. Additionally, a parametrization in  $\F_{m,\L}^r$ is called an $\L$-defined radical parametrization and it is denoted by $\cP_{\L}$. 
\end{definition}

We may see the tower variety $\cV_{\T}$ as a variety in $\A^{n+m}_\C$ or in $\A^{n+m}_\L$. In this setting, we denote by $\cV_{\T,\L}$  the  tower variety of $\T$ as a variety over the field $\L$. Similarly, we introduce the radical variety $\cV_{\cP,\L}$.

\begin{notation}
    Up to this section, the base field of all the varieties was $\C$ and there was no need of distinguishing between the variety as sets of complex solutions and as the spectrum of its coordinate ring, denoted by $\mathrm{Spec}(R)$ for a ring $R$. When working with non-algebraically closed fields such as $\R$ or $\L$, that distinction is crucial. In the rest of the section, a variety (as $\cV_{\T,\L}$ or $\cV_{\cP,\L}$) will represent the spectrum of its coordinate ring; see e.g. \cite{Hartshorne} Ch. II,  Sect 2. The set of solutions of a variety $X$ in the field $\L$ will be denoted by $X(\L)$.  {For instance, under the assumptions of Corollary \ref{co: no denom}, the notation is
    \[
    \cV_{\T,\L}=\mathrm{Spec}\left(
    \L[\overline{T},\overline{\Delta}]/\mathcal{E}
    \right) \ \text{ and } \ \cV_{\T,\L}(\L)=
    \left\{
    (\overline{t}_0,\overline{\delta}_0)\in\L^{n+m}: (\overline{t}_0,\overline{\delta}_0)\,\text{ vanishes at }\, \mathcal{E}
    \right\}.
    \]}
\end{notation}

 {Using this notation, we derive the following result.}

 \begin{proposition}\label{prop:tower subfield}
 Let $(\T_\L,\cP_\L)$ be an $\L$-defined radical parametrization. It holds that
\begin{enumerate}
\item $\cV_{\T,\L}$ is irreducible of dimension $n$.
\item    
    $\cV_{\cP,\L}
  $   is  irreducible of dimension $n$.
  \item The saturation 
  \[
  \E:\langle \alpha_{1,\mathrm{D}}(\oT)\cdots\alpha_{m,\mathrm{D}}(\oT)\rangle^\infty
  \]
  is the ideal of $\cV_{\T,\L}$.
  \item If $\alpha_{i,\mathrm{D}}=1$, then $\E$ is the ideal of $\cV_{\T,\L}$.
\end{enumerate}
\end{proposition}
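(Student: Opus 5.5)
The plan is to repeat the argument of Proposition \ref{prop- new E} and Lemma \ref{lemma- loc injection} with $\C$ replaced by $\L$ throughout, and then deduce the remaining items as in Theorem \ref{thm: incidence var} and Corollary \ref{co: no denom}.

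\emph{Localization.} Set $\alpha_{\mathrm{D}}=\alpha_{1,\mathrm{D}}\cdots\alpha_{m,\mathrm{D}}\in\L[\oT]$. I will rerun the induction of Lemma \ref{lemma- loc injection} with $R_0=\L[\oT]_{\alpha_{\mathrm{D}}}$ and $R_i=R_{i-1}[\Delta_i]/\langle\widehat{E}_i\rangle$. Each $\widehat{E}_i$ is monic in $\Delta_i$ over $R_{i-1}$, so division with remainder gives, exactly as before, an injection
\[
\L[\oT,\oD]_{\alpha_{\mathrm{D}}}/\E\hookrightarrow \F_{m,\L}.
\]
The definition assumes $E_i$ irreducible over $\F_{i-1}$, which is the $\C$-field. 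Since $\F_{i-1,\L}\subseteq\F_{i-1}$, Gauss/degree considerations show that $E_i$ remains irreducible over $\F_{i-1,\L}$. Hence $\F_{m,\L}$ is a field, of degree $e_1\cdots e_m$ over $\L(\oT)$. Alternatively, $\F_{m,\L}\otimes_{\L(\oT)}\C(\oT)$ has the right dimension and maps onto $\F_m$.

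\emph{Items 1 and 3.} From the injection, the coordinate ring of $\mathscr{Y}_{\T,\L}\cap U_\T$ is a domain with fraction field $\F_{m,\L}$. That field has transcendence degree $n$ over $\L$, which gives item 1. The closure of this open set in $\mathrm{Spec}\,\L[\oT,\oD]/\E$ corresponds to the kernel of $\L[\oT,\oD]\to(\L[\oT,\oD]/\E)_{\alpha_{\mathrm{D}}}$, which is the saturation. This gives item 3.

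To identify this with $\cV_{\T,\L}$, I note that saturation commutes with the flat base change $\L\to\C$. The $\C$-saturation is the ideal of $\cV_\T$ by Proposition \ref{prop- new E}. So the $\L$-saturation is $I(\cV_\T)\cap\L[\oT,\oD]$, which is the defining ideal of $\cV_{\T,\L}$. Item 4 then follows exactly as Corollary \ref{co: no denom}, since saturating by $1$ does nothing.

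\emph{Item 2.} I will argue as in Theorem \ref{thm: incidence var}. The incidence variety over $\L$ is isomorphic on $U_\cP$ to $\cV_{\T,\L}\cap U_\cP$, because the $p_i$ are $\L$-defined. Hence it is irreducible of dimension $n$. Then $\cV_{\cP,\L}$ is the closure of its image under the projection $\pi$, so it is irreducible. For the dimension, I use that the ideal of $\cV_{\cP,\L}$ is the elimination ideal, and elimination commutes with the extension $\L\to\C$. Therefore $\dim\cV_{\cP,\L}=\dim\cV_\cP=n$ by Remark \ref{Remark-first}. Equivalently, the function field of $\cV_{\cP,\L}$ is $\L(p_1,\dots,p_r)\subseteq\F_{m,\L}$, which has transcendence degree $n$ because the Jacobian has full rank.

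\emph{Main obstacle.} The delicate point is the irreducibility of $E_i$ over the $\L$-fields. This is needed so that $\F_{m,\L}$ is a field and the localized ring is a domain. I expect to settle it by the degree count $[\F_{m,\L}:\L(\oT)]\ge[\F_m:\C(\oT)]$, obtained via the surjection $\F_{m,\L}\otimes\C(\oT)\to\F_m$.
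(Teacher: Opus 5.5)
Your proof is correct, but for items 1 and 2 it follows a different route from the paper.

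\textbf{The paper's route.} It descends from $\C$. It uses the Cartesian square $\cV_\T\to\cV_{\T,\L}$ over $\mathrm{Spec}(\C)\to\mathrm{Spec}(\L)$. The top arrow is faithfully flat, hence surjective and dimension-preserving, so irreducibility and dimension $n$ pass from $\cV_\T$ to $\cV_{\T,\L}$. Item 2 then follows from $\cV_{\cP,\L}=\overline{\cR(\cV_{\T,\L})}$ with $\cR$ generically finite. Items 3 and 4 are handled by remarking that the proofs of Proposition \ref{prop- new E} and Corollary \ref{co: no denom} carry over verbatim to $\L$.

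\textbf{Your route.} You work intrinsically over $\L$. You rerun Lemma \ref{lemma- loc injection}, read irreducibility and dimension off the fraction field $\F_{m,\L}$, and use the transcendence degree of $\L(p_1,\ldots,p_r)$ for item 2. You then connect to $\cV_{\T,\L}$ through the compatibility of saturation with the flat extension $\L\to\C$, which gives $\E:\langle\alpha_{1,\mathrm{D}}\cdots\alpha_{m,\mathrm{D}}\rangle^\infty=I(\cV_\T)\cap\L[\oT,\oD]$.

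\textbf{What each buys.} Your approach makes explicit two facts the paper uses tacitly. First, $\F_{m,\L}$ is a field, which is needed for the $\L$-version of Proposition \ref{prop- new E}. Second, $I(\cV_\T)$ is generated by polynomials over $\L$, which is exactly what makes the paper's square Cartesian. The paper's argument is shorter once these are granted.

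\textbf{One correction to your last paragraph.} The irreducibility of $E_i$ over $\F_{i-1,\L}$ is not a real obstacle, and a degree count would not prove it. The ring $\L(\oT)[\oD]/\E$ has dimension $e_1\cdots e_m$ over $\L(\oT)$ whether or not the $E_i$ are irreducible. Note also that $\F_{m,\L}\otimes_{\L(\oT)}\C(\oT)\to\F_m$ is an isomorphism, not merely a surjection. The argument you already gave earlier is the right one: inductively, $\F_{i-1,\L}$ is a field embedded in $\F_{i-1}$, and a nontrivial factorization over it would be one over $\F_{i-1}$. Equivalently, $\F_{m,\L}$ injects into $\F_{m,\L}\otimes_{\L(\oT)}\C(\oT)\cong\F_m$, since field extensions are free. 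So $\F_{m,\L}$ is a domain finite over $\L(\oT)$, hence a field.
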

\begin{proof}
    We have the following Cartesian diagram (see e.g. \cite{Hartshorne} Ch. II,  Sect 2, for the notion of spectrum of a field)
    \[
\begin{tikzcd}
\cV_{\T} \arrow[r] \arrow[d] & {\cV_{\T,\L}} \arrow[d] \\
\mathrm{Spec}(\C) \arrow[r]          & \mathrm{Spec}(\L)              
\end{tikzcd}.
    \]
    Since the lower horizontal arrow is faithfully flat, the above horizontal arrow is faithfully flat too. In particular, $\dim(\cV_{\T,\L})=\dim( \cV_{\T}) $. Moreover, since the above horizontal arrow is surjective and $\cV_{\T}$ is irreducible, $\cV_{\T,\L}$ is also irreducible. Now, $\cV_{\cP,\L}
  $ is irreducible of dimension $n$ as it is the closure of $\cR(\cV_{\T,\L})$, $\cR$ is generically finite to one and $\cV_{\T,\L}$ is irreducible of dimension $n$.

  Statements (3) and (4) follow from the fact that the proofs of Proposition \ref{prop- new E} and Corollary \ref{co: no denom} hold also for $\L$ instead of $\C$.
\end{proof}

\subsection{The real case}

Now we focus on the case where $\L=\R$. Let $\T_\R$ be the $\R$--defined tower
\[
\T_\R :=\left[\ \F_{0,\R}:=\R(\overline{T})\subsetneq \F_{1,\R}:=\F_{0,\R}[\Delta_1]/\langle E_1\rangle\subsetneq\cdots\subsetneqq \F_{m,\R}:=\F_{0,\R}[\Delta_1,\ldots,\Delta_m]/\langle E_1,\ldots,E_m\rangle \right].
\]
 A distinction with respect to the complex case is that the field extensions of the tower might not be normal as the base field $\F_{0,\R}$ might not contain the required roots of the unity.
When $e_i$ is odd, the only solution to the equation $Y^{e_i}-\alpha_i$ in $\F_{i,\R}$ is $\Delta_i$. Whereas for $e_i$ even, the only solutions to the equation $Y^{e_i}-\alpha_i$ in $\F_{i,\R}$ are $\pm \Delta_i$.
As a consequence, the choice of which radical is used  is important in this case.

Another remark required in the real setting is the distinction between the complex radical and the real radical. In the complex setting, there are $e_i$ choices of the $e_i$--th complex radical described in \eqref{eq-raices-unidad}.
The problem is that for $e_i$ odd, none of the restrictions to $\R$ of the $e_i$ complex radicals coincide with the real radical function $\sqrt[e_i]{x}$. 

Throughout this section we consider that the parametrizations are expressed in canonical form.
An $\R$-defined parametrization $\cP\in\F_{m,\R}^r$ represents a real--valued function (maybe with empty domain) $\cP:\R^n\dashrightarrow\R^r$ given by the real radicals (not by the restriction of the complex radicals to the real numbers). We denote by $\mathrm{Dom}_\R(\cP)$ the domain of the $\R$-defined parametrization $\cP$. For the rest of this section, by convention, for the case when $e_i$ is even, the real radical $\sqrt[e_i]{\phantom{x}}$ represents the non-negative real radical. 

Let $X$ be an irreducible variety over $\C$ of dimension $\ell$. We say that $X(\R)$ is a real variety if $X$ contains a nonempty  Euclidean 
open set of $\R^\ell$.

Let $X$ be a variety over $\C$. In general, it is not true that if $X(\C)$ is irreducible then $X(\L)$ is irreducible. Here the topology of $X(\L)$ is the Zariski topology at the level of the sets of solutions. That is, the closed subsets are the zero sets of solution over $\L$ of a variety. Nevertheless, if $X$ is irreducible over $\C$ and has a simple real point, then $X(\R)$ is Zariski-dense in $X$ by \cite[Prop. 3.3.16]{BCR98} or \cite[Theorem 5.1]{sottile2018real}. 
The statement also holds for every field $\L$ that is
\emph{large} in the sense of Pop \cite{Pop96}, for example any real closed
field.
So Proposition \ref{prop:tower subfield} can be extended as
follows:

\begin{proposition}\label{prop:tower subfield-real} Let $(\T_\R,\cP_\R)$ be an $\R$-defined radical parametrization. If $\cV_{\T,\R}(\R)$ has a real simple point, it holds that
\begin{enumerate}\item   $\cV_{\T,\R}(\R)$ is irreducible and real.
     \item    
    $\cV_{\cP,\R}(\R) 
  $  is irreducible and real.
\end{enumerate}
  
\end{proposition}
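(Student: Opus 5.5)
The plan is to reduce everything to the density statement just quoted: \cite[Prop. 3.3.16]{BCR98} says that if $X$ is irreducible over $\C$ and has a simple real point, then $X(\R)$ is Zariski-dense in $X$.

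For item (1), I will first take the geometric input from earlier results. The complex tower variety $\cV_\T$ is irreducible of dimension $n$ by Corollary \ref{cor-VT}. Since $\T_\R$ is $\R$-defined, its ideal is the saturation $\E:\langle\alpha_{1,\mathrm{D}}\cdots\alpha_{m,\mathrm{D}}\rangle^\infty$, and this ideal has real generators by Proposition \ref{prop:tower subfield}(3). So $\cV_\T$ is defined over $\R$, and $\cV_{\T,\R}$ is irreducible of dimension $n$ by Proposition \ref{prop:tower subfield}(1).

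Given a simple real point, \cite[Prop. 3.3.16]{BCR98} gives that $\cV_{\T,\R}(\R)$ is Zariski-dense in $\cV_\T$. Irreducibility of $\cV_{\T,\R}(\R)$ then follows by the standard argument. Suppose $\cV_{\T,\R}(\R)=Z_1\cup Z_2$ with each $Z_i$ a real Zariski-closed proper subset. Taking complex Zariski closures writes $\cV_\T$ as $\overline{Z_1}\cup\overline{Z_2}$, so by irreducibility one of them, say $\overline{Z_1}$, is all of $\cV_\T$. Intersecting with $\R^{n+m}$ gives $Z_1=\cV_{\T,\R}(\R)$, a contradiction.

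For ``real'', I will use the implicit function theorem at the simple point $p$. Near $p$, $\cV_{\T,\R}(\R)$ is a real analytic manifold of dimension $n$, so it contains a Euclidean open subset of $\R^n$ after a local coordinate choice. This is exactly the definition of being a real variety.

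For item (2), I will push everything forward along $\cR:\cV_\T\dashrightarrow\cV_\cP$. Since $\cP$ is $\R$-defined, $\cR$ is a real rational map whose components are $p_{i,\mathrm N}/p_{i,\mathrm D}$. The denominators $p_{i,\mathrm D}$ depend only on $\oT$ and are nonzero in $\F_m$, so their zero set meets $\cV_{\T,\R}(\R)$ in a proper Zariski-closed subset. By density, $W:=\cV_{\T,\R}(\R)\setminus\V(p_{\mathrm D})$ is still Zariski-dense in $\cV_\T$. Then $\cR(W)\subset\cV_{\cP,\R}(\R)$, and $\overline{\cR(W)}=\overline{\cR(\cV_\T)}=\cV_\cP$, which is irreducible of dimension $n$ by Remark \ref{Remark-first}.

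Real points of $\cV_\cP$ are therefore Zariski-dense. Irreducibility of $\cV_{\cP,\R}(\R)$ follows by the same closure argument as in item (1), now using the irreducibility of $\cV_\cP$.

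For ``real'', I will look at the restriction of $\cR$ to the real $n$-manifold near a generic simple real point of $W$. Since $\cR$ is generically finite, its Jacobian has rank $n$ on a Zariski-dense open subset of $\cV_\T$. This open set meets $W$ because $W$ is dense. So near such a point $\cR$ is a local immersion, and its image is an $n$-dimensional real analytic submanifold of $\cV_\cP$. Then $\cV_{\cP,\R}(\R)$ contains an $n$-dimensional Euclidean open piece, and it also has a simple real point, since the smooth locus is dense.

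The main obstacle is this last rank argument. One must make sure that the generic full-rank locus of $\cR$, the smooth locus of $\cV_\cP$, and the complement of the denominators all meet the real manifold. I plan to handle this uniformly by invoking Zariski density of $W$ against each proper closed exceptional set in turn.
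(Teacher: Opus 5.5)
Your proposal is correct and takes the same route as the paper. The paper gives no separate proof: it deduces the statement from the Zariski density of real points provided by \cite[Prop.~3.3.16]{BCR98}, combined with Proposition~\ref{prop:tower subfield} and the map $\cR$. You make explicit what it leaves implicit: the closure argument for irreducibility, the implicit-function argument for being real, and the transfer of density to $\cV_\cP$ through $\cR$. Your Jacobian-rank argument for $\cR$ is not needed, since once the real points of $\cV_\cP$ are Zariski dense they meet its smooth locus, and the argument of item (1) then applies verbatim.
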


We note that our colleagues, J. Caravantes, D. Sevilla and C. Villarino, have independently obtained a similar result to Prop. \ref{prop:tower subfield-real} in the one-dimensional case (private communication).

\begin{remark}
Note that, under the conditions of Proposition \ref{prop:tower subfield-real}, $\dim(\cV_{\T,\R})=\dim(\cV_{\T,\R}(\R))=\dim(\cV_{\cP,\R}(\R))$.
\end{remark}

 In this setting, we introduce the next notion.

 \begin{definition}
     The \textsf{$\R$-defined   universal radical parametrization} of an $\R$-defined  tower $\T_\R$ is the parametrization (see Section \ref{sec:universal-param})
\[
\cPR:=(\overline{T},\overline{\Delta})\in\F_{m,\R}^{n+m}.
\]
\end{definition}
As a real radical map, $\cPR$ represents the map 
\[\begin{array}{cccl}
\cPR:&\R^{n}&\dashrightarrow&\cV_{\T,\R}\subset\R^{n+m}\\
 & \overline{t}&\mapsto & (\overline{t},\sqrt[e_1]{\alpha_1(\overline{t})},\ldots, \sqrt[e_m]{\alpha_m(\overline{t})}).
\end{array}
\]
Recall that here $\sqrt[e_i]{\phantom{x}}$ represents the real  radical (positive if $e_i$ is even). In particular, $\cPR$ might not coincide with the restriction of $\cPU$ to $\R^n$. For $e_i$ even, if one wants to work with the negative real radical $-\sqrt[e_i]{{\phantom{x}}}$, one should change the tower with the change of variables $\Delta_i'=-\Delta_i$ as shown in Example \ref{ex: distinct roots}.
A problem arising from this type of parametrization, while working with $\R$, is that depending on the choice of $\cP$, the radical parametrization might have empty domain.

\begin{example}\label{ex-pat1}
We consider the $\R$--defined parametrization $(\T_\R, \cP_\R)$
\[ \T_\R=\left[\F_{0,\R}:=\R(T_1,T_2)\subsetneq \F_{0,\R}[\Delta]/ \langle \Delta^2+1+T_1^2+T_2^2\rangle \right], \,\,
\cP=\left( \Delta^3,T^2_1+T_2^2,T_2- \Delta\right).
    \]
The radical parametrization $\cP$, as function from $\R^2$ to $\R^3$, has empty domain.
    Consider the tower over the reals
    \[
    \F_{0,\R}:=\R(T_1,T_2)\subsetneq \F_{0,\R}[\Delta]/ \langle \Delta^2+1+T_1^2+T_2^2\rangle.
    \]
    Note that in this case,  $\cV_{\T,\R}(\R)=\emptyset$. The fact that  the set of  the real points of the tower variety is empty is deeply related with the emptiness of the domain of the parametrization.

In this example another pathology appears. The radical variety $\cV_{\cP,\R}(\R)$  is the cylinder of equation $X_{2}^{3}+X_{1}^{2}+3 X_{2}^{2}+3 X_{2}+1=0$ that is rational and can be parametrized as
  \[  \left(-\dfrac{H_{1}^{3}}{H_{1}^{3}+3 H_{1}^{2}+3 H_{1}+1}, 
-\dfrac{2 H_{1}^{2}+2 H_{1}+1}{H_{1}^{2}+2 H_{1}+1}, H_{2}\right) \]
    despite the radical parametrization having empty domain.
\end{example}

Example \ref{ex-pat1} exhibits the deep relation between the real algebraic geometry of the tower variety and the domain of the $\R$--defined  radical parametrization.
To describe such relation, we introduce the following notation. Given a real tower $\T_\R$, we consider the sets
\[  
M_i=\left\{
\begin{array}{ll}
\{1\}& \text{ if } e_i \text{ is odd},\\
\{\pm 1\} & \text{ if } e_i \text{ is even},
\end{array}
\right.
\]
for $1\leq i\leq m$.
Let $M:=M_1\times\cdots\times M_m$. Given a tuple $v=(v_1,\ldots,v_m)\in M$, we consider the ring map 
\[\begin{array}{cccl}
\lambda_v:&\F_{0,\R}[\oD]&\longrightarrow &\F_{0,\R}[\oD_v]\\
 & \Delta_i&\longmapsto& v_i\cdot\Delta_{v,i}.
\end{array}
\]
Here $\overline{\Delta}_v=(\Delta_{v,1},\ldots,\Delta_{v,m})$.  
In other words, $\lambda_v$ is the linear isomorphism given by sign change corresponding to the tuple $v$. By construction, such sign change could only possibly change the sign of $\Delta_i$ for $e_i$ even. 
For $v\in M$, we define the tower $\T_\R^{(v)}$ as the tower 
whose $i$-th floor $\F_{i,\R}^{(v)}$ is
\begin{equation}\label{eq:R associated tower}
\F_{i,\R}^{(v)} = \F_{i-1,\R}^{(v)}[\Delta_{v,i}]/\langle E_i^{(v)}\rangle \ \text{ where } \ E_i^{(v)}=\lambda_v(E_i),
\end{equation}
and $\F_{0,\R}^{(v)}=\F_{0,\R}$. Technically, $\lambda_v(E_i)\in\F_{0,\R}[\overline{\Delta}_v]$. However, by construction, $\lambda_v(E_i)$  is also contained in the subring $\F_{0,\R}[\Delta_{v,1},\ldots,\Delta_{v,i}]$. Therefore, we may see 
$E_i^{(v)}=\lambda_v(E_i)$
as an element of $\F_{i-1,\R}^{(v)}[\Delta_{v,i}]$
in \eqref{eq:R associated tower}. Under this construction, we introduce the following definition 

\begin{definition}
    We say $\T_\R^{(v)}$ is an \textsf{$\R$-associated tower to $\T_{\R}$}.
\end{definition}

\begin{example}\label{ex:associated towers}
Consider the tower
\[
\T_{\R}:=\left[ \begin{array}{c}
\F_{0,\R}=\R(T)\subset\F_{1,\R}=\F_{0,\R}[\Delta_1]/\langle\Delta_1^2-T\rangle
\subset\F_{2,\R}=\F_{1,\R}[\Delta_2]/\langle{\Delta_2^3-1-T}\rangle\\
\subset\F_{3,\R}=\F_{2,\R}[\Delta_3]/\langle\Delta_3^2-T-\Delta_1\rangle
\end{array}
\right].
\]
In this example, only $e_1=2$ and $e_3=2$ are even and we get that $M=\{\pm 1\}\times\{1\}\times\{\pm 1\}$. For $v=(-1,1,-1)$, the radical tower $\T_\R^{(v)}$ is given by 
\[
\T_{\R}^{(v)}:=\left[ \begin{array}{c}
\F_{0,\R}^{(v)}=\R(T)\subset\F_{1,\R}^{(v)}=\F_{0,\R}^{(v)}[\Delta_{v,1}]/\langle\Delta_{v,1}^2-T\rangle
\subset\F_{2,\R}^{(v)}=\F_{1,\R}^{(v)}[\Delta_{v,2}]/\langle\Delta_{v,2}^3-1-T\rangle\\
\subset\F_{3,\R}^{(v)}=\F_{2,\R}^{(v)}[\Delta_{v,3}]/\langle\Delta_{v,3}^2-T+\Delta_{v,1}\rangle
\end{array}
\right].
\]
Notice that the towers $\T_{\R}$ and $\T_{\R}^{(v)}$ differ mainly in the equation of the last floor of the tower. They are isomorphic via the change of variables
\[
\Delta_1\mapsto-\Delta_{v,1},\ \Delta_2\mapsto\Delta_{v,2}, \ \text{ and } \ \Delta_3\mapsto-\Delta_{v,3}.
\]
Another distinction between the two towers are their universal radical parametrizations, which are 
\[
\begin{array}{cc}
\begin{array}{cccl}
\cPR:&\R&\dashrightarrow&\cV_{\T,\R}\\
 & t& \mapsto&(t,\sqrt{t},\sqrt[3]{1+t},\sqrt{t+\sqrt{t}})
\end{array}
,
&
\begin{array}{cccl}
\cPR^{(v)}:&\R&\dashrightarrow&\cV_{\T^{(v)},\R}\\
 & t& \mapsto&(t,\sqrt{t},\sqrt[3]{1+t},\sqrt{t-\sqrt{t}})
\end{array}
\end{array}  
\]
\end{example}

Given $v\in M$ and the corresponding $\R$-associated tower $\T_\R^{(v)}$, we can construct the tower variety $\cV_{\T^{(v)},\R}$  of $\T_\R^{(v)}$. Then, the isomorphism
\[ \begin{array}{cccl}
\Lambda_v:&\R^{n+m}&\longrightarrow&\R^{n+m}\\
 & (\overline{T},\overline{\Delta}_v)&\longmapsto &
(\overline{T},v_1\Delta_{v,1},\ldots,v_m\Delta_{v,m})
\end{array}
\]
restricts to an isomorphism $\Lambda_v:\cV_{\T^{(v)},\R}\rightarrow\cV_{\T,\R}$.

\begin{definition}
    The \textsf{$\R$--associated universal parametrizations} to $\T_\R$ are the radical parametrizations
 $$\Lambda_v\circ\cPR^{(v)}:\R^n\dashrightarrow\cV_{\T,\R}$$ for $v\in M$, where  $\cPR^{(v)}$ is the $\R$--defined universal radical parametrization of  $\T_{\R}^{(v)}$. 
 
 More generally, let $\cP_\R$ be a radical parametrization and let $\cR$ be as in Diagram \ref{fig:diagram2}. The \textsf{$\R$-associated radical parametrizations to $\cP_\R$} are the radical parametrizations of the form $$\cR\circ\Lambda_v\circ \cPR^{(v)}:\R^{n}\dashrightarrow\cV_\cP$$ for $v\in M$. We write that $\cP_\R'\sim \cP_\R$ to indicate that $\cP'_\R$ is $\R$-associated  to $\cP_\R$.
\end{definition}

The following commutative diagram illustrates the definition of $\R$-associated radical parametrizations
\[
\begin{tikzcd}
{\cV_{\cP,\R}} & {\cV_{\T,\R}} \arrow[l, "\cR"']                                            & {\cV_{\T^{(v)},\R}} \arrow[l, "\Lambda_v"'] \\
               & \R^n \arrow[u, "\cP_v"'] \arrow[ru, "\cPR^{(v)}"'] \arrow[lu, "\cP_{\R}'"] &                                        
\end{tikzcd}.
\]

\begin{remark}
 Note that the $\R$-associated radical parametrizations of a radical parametrization $\cP_\R\in\F_{m,\R}^r$ might not be represented by an element in $\F_{m,\R}^r$ but of $(\F_{m,\R}^{(v)})^r$. This behavior is described in Example \ref{ex: distinct roots}. This example can be treated over $\R$ instead of over $\C$, and $\psi'$ is an $\R$-associated universal parametrization given by the vector $(-1,1)$. It is obtained by the change of variables $\Delta_1^{(v)}=-\Delta_1$ and $\Delta_2^{(v)}=\Delta_2$. However, as shown in the example such parametrization can not be represented as an element of $\F_2^3$.
\end{remark}

\begin{example}
    Following Example \ref{ex:associated towers}, we have four $\R$-associated universal radical parametrizations given by the vectors $(1,1,1),(1,1,-1),(-1,1,1),(-1,1,-1)$. These parametrizations are $\Lambda_v\circ\cPR^{(v)}$ for $v$ among these vectors. More explicitly, they are the maps from $\R$ to $\cV_{\T,\R}(\R)$ given by
    \[
    \begin{array}{rccl}
    \Lambda_{(1,1,1)}\circ \cPR^{(1,1,1)}:& t&\longmapsto&(t,\sqrt{t},\sqrt[3]{1+t},\sqrt{t+\sqrt{t}}),\\
    \Lambda_{(1,1,-1)}\circ \cPR^{(1,1,-1)}:& t&\longmapsto&(t,\sqrt{t},\sqrt[3]{1+t},-\sqrt{t+\sqrt{t}}),\\
\Lambda_{(-1,1,1)}\circ \cPR^{(-1,1,1)}:& t&\longmapsto&(t,-\sqrt{t},\sqrt[3]{1+t},\sqrt{t-\sqrt{t}}),\\
\Lambda_{(-1,1,-1)}\circ \cPR^{(-1,1,-1)}:& t&\longmapsto&(t,-\sqrt{t},\sqrt[3]{1+t},-\sqrt{t-\sqrt{t}}).
    \end{array}
    \]

\end{example}

The next result relates the real points of the tower variety and the domains of the $\R$-associated radical parametrizations of a given radical parametrization. We recall that $\widehat{\pi}:\R^{n+m}\rightarrow\R^n$ is the projection $(\overline{T},\overline{\Delta})\mapsto \overline{T}$ in Diagram \eqref{fig:diagram2}, and $A(\overline{T})$  is defined in \eqref{eq-pols} as the $\mathrm{l.c.m}$ of the denominators $\alpha_{i,\mathrm{D}}(\overline{T})$ and $p_{i,\mathrm{D}}(\overline{T})$. We denote the real domain of an $\R$--defined radical parametrization $\cP_\R$ by $\mathrm{Dom}_\R(\cP_\R)$.

\begin{theorem}\label{thm:real domain}
    Let  $(\T_\R,\cP_\R)$ be given by $\cP_\R\in\F_{m,\R}^r$. Then
    \begin{equation}\label{eq:real domains}
    \widehat{\pi}(\cV_{\T,\R}(\R))\setminus\V(A)(\R)  \subseteq \bigcup_{\cP_\R'\sim \cP_\R}\mathrm{Dom}_\R(\cP_\R').
    \end{equation}
    Moreover, for minimal parametrizations equality in \eqref{eq:real domains} holds.
\end{theorem}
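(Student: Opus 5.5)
The plan is to prove the inclusion \eqref{eq:real domains} pointwise, by lifting a real point of the tower variety to a choice of signs, and then to prove the reverse inclusion using minimality.

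\textbf{The inclusion.} Take $\overline{t}_0\in\widehat{\pi}(\cV_{\T,\R}(\R))$ with $A(\overline{t}_0)\neq0$, and choose $(\overline{t}_0,\overline{\delta}_0)\in\cV_{\T,\R}(\R)$ above it.

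Since $A(\overline{t}_0)\neq0$, every $\alpha_{i,\mathrm{D}}(\overline{t}_0)\neq0$. Hence the point lies in $U_\T$, and by Proposition \ref{prop:tower subfield} (3) it satisfies the equations $E_i$. In canonical form these read
\[
\delta_{0,i}^{e_i}=\alpha_i(\overline{t}_0,\delta_{0,1},\ldots,\delta_{0,i-1}).
\]
I then construct $v\in M$ inductively on $i$:
\begin{itemize}
\item if $e_i$ is odd, the real number $\delta_{0,i}$ is forced to be the real radical $\sqrt[e_i]{\alpha_i(\cdots)}$, so set $v_i=1$;
\item if $e_i$ is even, then $\alpha_i(\cdots)=\delta_{0,i}^{e_i}\ge0$ and $\delta_{0,i}=\pm\sqrt[e_i]{\alpha_i(\cdots)}$; take $v_i$ to be that sign.
\end{itemize}
By the construction of $\T_\R^{(v)}$ via $\lambda_v$, the point $(\overline{t}_0,v_1\delta_{0,1},\ldots,v_m\delta_{0,m})$ is exactly $\cPR^{(v)}(\overline{t}_0)$. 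The real radicals are evaluated recursively at arguments that are real and, for even index, nonnegative, so $\overline{t}_0\in\mathrm{Dom}_\R(\cPR^{(v)})$.

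Applying $\Lambda_v$ recovers $(\overline{t}_0,\overline{\delta}_0)$. Applying $\cR$ is then well defined, because the denominators $p_{i,\mathrm{D}}(\overline{t}_0)$ are nonzero, as they divide $A$. Therefore $\overline{t}_0\in\mathrm{Dom}_\R(\cR\circ\Lambda_v\circ\cPR^{(v)})$, and this parametrization is one of the $\cP_\R'\sim\cP_\R$.

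\textbf{The reverse inclusion for minimal parametrizations.} Let $\overline{t}_0\in\mathrm{Dom}_\R(\cP'_\R)$ with $\cP_\R'=\cR\circ\Lambda_v\circ\cPR^{(v)}$. I want to show that every radical $\sqrt[e_i]{\alpha_i^{(v)}}$ is real-defined at $\overline{t}_0$. Once that holds, $\Lambda_v\circ\cPR^{(v)}(\overline{t}_0)$ is a real point satisfying the $E_i$, hence lies in $\cV_{\T,\R}(\R)$ after taking closure. It should also avoid $\V(A)$, since the domain presupposes that the relevant denominators do not vanish.

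The point to settle is the following. The domain of $\cP'_\R$ only requires that the radicals actually \emph{appearing} in the canonical expressions of the $p_j$ are defined, together with the radicals needed to build them. By Lemma \ref{lem: lin alg} and the correctness of Algorithm \ref{alg:1} (Proposition \ref{prop: alg 1}), the indices reached this way are exactly those of the minimal subtower, and by minimality this is all of $\{1,\ldots,m\}$. So every $\delta_i$ must be evaluated, and being in the domain means each of them is real-defined.

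\textbf{Expected main obstacle.} The hardest step is this last one: making precise that the real domain of a canonical-form radical expression means \enquote{every nested radical that occurs is defined}. In particular, one must rule out that a radical occurs in a $p_j$ only with a coefficient that vanishes at $\overline{t}_0$. I would handle this by adopting exactly that syntactic convention for $\mathrm{Dom}_\R$, and by noting that canonical form is stable under the sign change $\lambda_v$.
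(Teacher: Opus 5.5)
Your proposal is correct and follows essentially the same route as the paper. For the inclusion, the paper simply asserts that the real fiber $\widehat{\pi}^{-1}(\overline{t}_0)\cap\R^{n+m}$ is covered by the values of the sections $\Lambda_v\circ\cPR^{(v)}$, and your inductive choice of the signs $v_i$ actually verifies this. For the reverse inclusion under minimality, the paper uses the same recursion through the $\Delta$'s appearing in the canonical forms of the $p_j$ and $\alpha_i$ (the recursion of Algorithm~\ref{alg:1}), and it implicitly relies on the same syntactic convention for $\mathrm{Dom}_\R$ that you make explicit.
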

\begin{proof}
Let $(\overline{t}_0,\overline{\delta}_0)$ be a real point in $ \cV_{\T,\R}$ with $A(\overline{t}_0)\neq 0$. We need to show that $\overline{t}_0$ is in the domain of some $\R$--associated parametrization   $\cP_\R'$ of $\cP_\R$. By construction, for every $v\in M$, the $\R$-associated universal parametrization $\Lambda_v\circ\cPR^{(v)}$ is a real section of the projection $\widehat{\pi}$. Moreover, the real fibers of $\widehat{\pi}$ are contained in the image of some $\R$-associated universal parametrization. In other words, for $\ot_{0}\in \R^n$, we have that
\[
\widehat{\pi}^{-1}(\ot_{0})\cap\R^{n+m} = \{\Lambda_v\circ\cPR^{(v)}(\ot_{0}): v\in M
\}\cap\R^{n+m}.
\]
Since $(\ot_{0},\oD_0)$ is a real point in the fiber $\widehat{\pi}^{-1}(\ot_{0})$, there exists an $\R$--associated universal parametrization $\cPR'$  such that $\cPR'(\ot_{0})= (\ot_{0},\oD_0)$. Therefore, $\overline{t}_0$ is in the domain of $\cPR'$, and hence, it is also in the domain of $\cR\circ\cPR'$. We conclude that $\overline{t}_0$ is contained in the right--hand side of \eqref{eq:real domains} as $\cR\circ\cPR'$ is $\R$-associated to $\cP_\R$.

     Assume now that $(\T_\R,\cP_\R)$ is minimal. Let $\ot_{0}\in \mathrm{Dom}_\R(\cP_\R')$ for an $\R$-associated parametrization $\cP_\R'$ of $\cP$. In other words, there exists $v\in M$ such that $\ot_{0}\in\mathrm{Dom}_\R(\cR\circ\Lambda_v\circ\cPR^{(v)})$. 
     To check that $\ot_{0}$ is on the left--hand side of \eqref{eq:real domains}, it is enough to show that $\Lambda_v\circ\cPR^{(v)}(\ot_{0})$ is a real point of $\cV_{\T,\R}$. Since $\Lambda_v$ is a real linear isomorphism, we may assume that $\cPR^{(v)}$ is the real universal parametrization $\cPR$ of $\T_\R$.
     
     Therefore, we need to show that $\delta_i(\ot_{0})$ is real for all $i\in\{1,\ldots,m\}$. Here, $\delta_i$ represent the real valued radical function
     $\sqrt[e_i]{\alpha_i(t)}$.
     Identifying $\delta_i$ with $\Delta_i$, we may assume that $p_1,\ldots,p_r$ and $\alpha_1,\ldots,\alpha_m$ are written  canonically,  viewing   $\F_{m,\R}$ as  $\F_{0,\R}$--vector space. Let $\Delta_{i_1},\ldots,\Delta_{i_k}$ be the $\Delta$'s that appear in the canonical representation of $p_1,\ldots,p_r$. Since $\ot_{0}$ is in the domain of $\cP_\R$, we get that $\delta_{i_1}(\ot_{0}),\ldots,\delta_{i_k}(\ot_{0})$ are real. Since $\delta_{i_1}(\ot_{0})$ is real, $\alpha_{i_1}(\ot_{0})$ is also real. In particular, $\delta_{i}(\ot_{0})$ is real for all $i\in\{1,\ldots,m\}$ such that $\Delta_i$ appears in the canonical representation of $\alpha_{i_1}$. Repeating this process recursively to each $\delta_i$ such that $\delta_i(\ot_{0})$ is real, we get that $\delta_i(\ot_{0})$ is real for every $i\in\{1,\ldots,m\}$. Indeed, this recursion is exactly the same used in Subsection \ref{sec:algorithms} for calculating the minimal radical subtower of the parametrization $\cP_\R$.
     Therefore, through this process we get that $\delta_i(\ot_{0})$ is real for every $i$ in the subset associated to the minimal radical subtower of $\T_\R$. The proof follows from the minimality of $\cP_\R$ as the set associated to the minimal radical subtower is $\{1,\ldots,m\}$.
\end{proof}

\begin{remark}
   The strength of Theorem \ref{thm:real domain} lies in the fact that the left--hand side of \eqref{eq:real domains} almost does not depend on the parametrization, whereas the right--hand side a priori depends strongly on the  chosen parametrization. More concretely, we have that 
   \[
\begin{aligned}
  \widehat{\pi}(\cV_{\T,\R}(\R))\setminus \V(A)(\R)
  &= \Bigl(\widehat{\pi}(\cV_{\T,\R}(\R))\setminus\{\alpha_{i,\mathrm{D}}(t)=0:1\leq i\leq m\}\Bigr)\\
  &\qquad \setminus\{p_{i,\mathrm{D}}(t)=0:1\leq i\leq r\}.
\end{aligned}
\]
    Note that 
    \begin{equation}\label{eq:some projection}
    \widehat{\pi}(\cV_{\T,\R}(\R))\setminus\{\alpha_{i,\mathrm{D}}(t)=0:1\leq i\leq m\}
    \end{equation}
    is the projection via $\widehat{\pi}$ of the real points of a nonempty Zariski-open subset of the tower variety and it is intrinsic from the tower. Moreover, the left--hand side of \eqref{eq:real domains} is obtained by taking out from \eqref{eq:some projection} the points in 
   the proper Zariski-closed subset $\{p_{i,\mathrm{D}}(t)=0:1\leq i\leq r\}$ arising from the denominators of the parametrization. Note that  by construction, the tower variety is never contained in this closed subset. Hence, \eqref{eq:real domains} and \eqref{eq:some projection} differ by a closed subset. When the real points of the tower variety also have dimension $n$,  this closed subset is proper in \eqref{eq:some projection}. So in this case, the difference between \eqref{eq:real domains} and \eqref{eq:some projection} is on a proper Zariski-closed subset. If the real points of the tower variety do not form an $n$-dimensional set, it could happen that these real points are contained in $\{p_{i,\mathrm{D}}(t)=0:1\leq i\leq r\}$.
\end{remark}

The idea behind Theorem \ref{thm:real domain} is that the real points of the tower variety encode the domain of every $\R$-defined radical parametrization $(\T_\R,\cP_\R)$ and its $\R$--associated radical parametrizations. More concretely,
from the proof of Theorem \ref{thm:real domain} we obtain a covering by semialgebraic sets of the real points of the tower variety given by the domains of the $\R$-associated real universal parametrizations of $\T_\R$. More precisely, we have that 
\begin{equation}\label{eq:subdiv of real tower}
\begin{array}{c}
\displaystyle
\cV_{\T,\R}(\R)\setminus \{\alpha_{i,\mathrm{D}}(t)=0:1\leq i\leq m\} = \bigcup_{\cPR'\sim\cPR} \mathrm{Im}_\R(\cPR')=\\ 
\displaystyle
\bigcup_{\cPR'\sim\cPR} \cPR'(\mathrm{Dom}_\R(\cPR'))
\end{array}
\end{equation}
For any minimal parametrization $\cP_\R\in\F_{m,\R}^r$, its real domain will be the projection of one of these
semialgebraic sets minus the zeros of the denominators of $\cP_\R$.

\begin{example}
     Consider the real tower
    \begin{equation}\label{eq:tower real example curve}
    \T_\R:=\left[
    \F_{0,\R}=\R(T)\subsetneq\F_{1,\R}=\F_{0,\R}[\Delta_1]/\langle \Delta_1^2+T^2-1\rangle\subsetneq\F_{2,\R}=\F_{1,\R}[\Delta_2]/\langle \Delta_2^2-T\Delta_1\rangle
    \right].
    \end{equation}
    Over the complex numbers, the tower variety  $\cV_\T$ is an affine chart of a space elliptic curve. Over $\R$, the tower variety 
    $\cV_{\T,\R}$ is a space curve in $\R^3$ with two ovals (see Figure \ref{fig:subdiv real rad curve}). The $\R$--defined universal radical parametrization is the map
    \[
    \begin{array}{cccl}
         \cPR:&\R&\dashrightarrow&\cV_{\T,\R}\subset \R^3  \\
         & t&\mapsto&(t,\sqrt{1-t^2},\sqrt{t\sqrt{1-t^2}}),
    \end{array}
    \]
    and its domain is the closed interval $[0,1]$. In this example, $M=\{\pm 1\}^2$ and the $\R$-associated universal parametrizations, together with their domains, are
    \[
    \begin{array}{cccccccc}
    \Lambda_{(1,-1)}\circ\cPR^{(1,-1)}:&[0,1]&\longrightarrow&\cV_{\T,\R},
   & &t&\longmapsto&(t,\sqrt{1-t^2},-\sqrt{t\sqrt{1-t^2}})
    \\
    \Lambda_{(-1,1)}\circ\cPR^{(-1,1)}:&[-1,0]&\longrightarrow&\cV_{\T,\R},
     & &t&\longmapsto&(t,-\sqrt{1-t^2},\sqrt{-t\sqrt{1-t^2}})
    \\
    \Lambda_{(-1,-1)}\circ\cPR^{(-1,-1)}:&[-1,0]&\longrightarrow&\cV_{\T,\R},
     & &t&\longmapsto&(t,-\sqrt{1-t^2},-\sqrt{-t\sqrt{1-t^2}})
    \end{array}
    \]
    Figure \ref{fig:subdiv real rad curve} illustrates the subdivision of the real points of the tower variety given in \eqref{eq:subdiv of real tower}. The arc of $\cV_{\T,\R}(\R)$ illustrated in  Figure \ref{fig:subdiv real rad curve} in red corresponds to the image of $\cPR$. The purple corresponds to $\Lambda_{(1,-1)}\circ\cPR^{(1,-1)}$, the blue to $ \Lambda_{(-1,1)}\circ\cPR^{(-1,1)}$ and the green to $ \Lambda_{(-1,-1)}\circ\cPR^{(-1,-1)}$. Any minimal parametrization $\cP_\R
    \in\F_{m,\R}^r$ has $4$ $\R$--associated radical parametrizations. Their domain  is the projection of one of these arcs of $\cV_{\T,\R}(\R)$ via $\widehat{\pi}$,  up to possibly a finite number of points   given by the denominators of $\cP$.
    \begin{figure}
        \centering
        \includegraphics[width=0.5\linewidth]{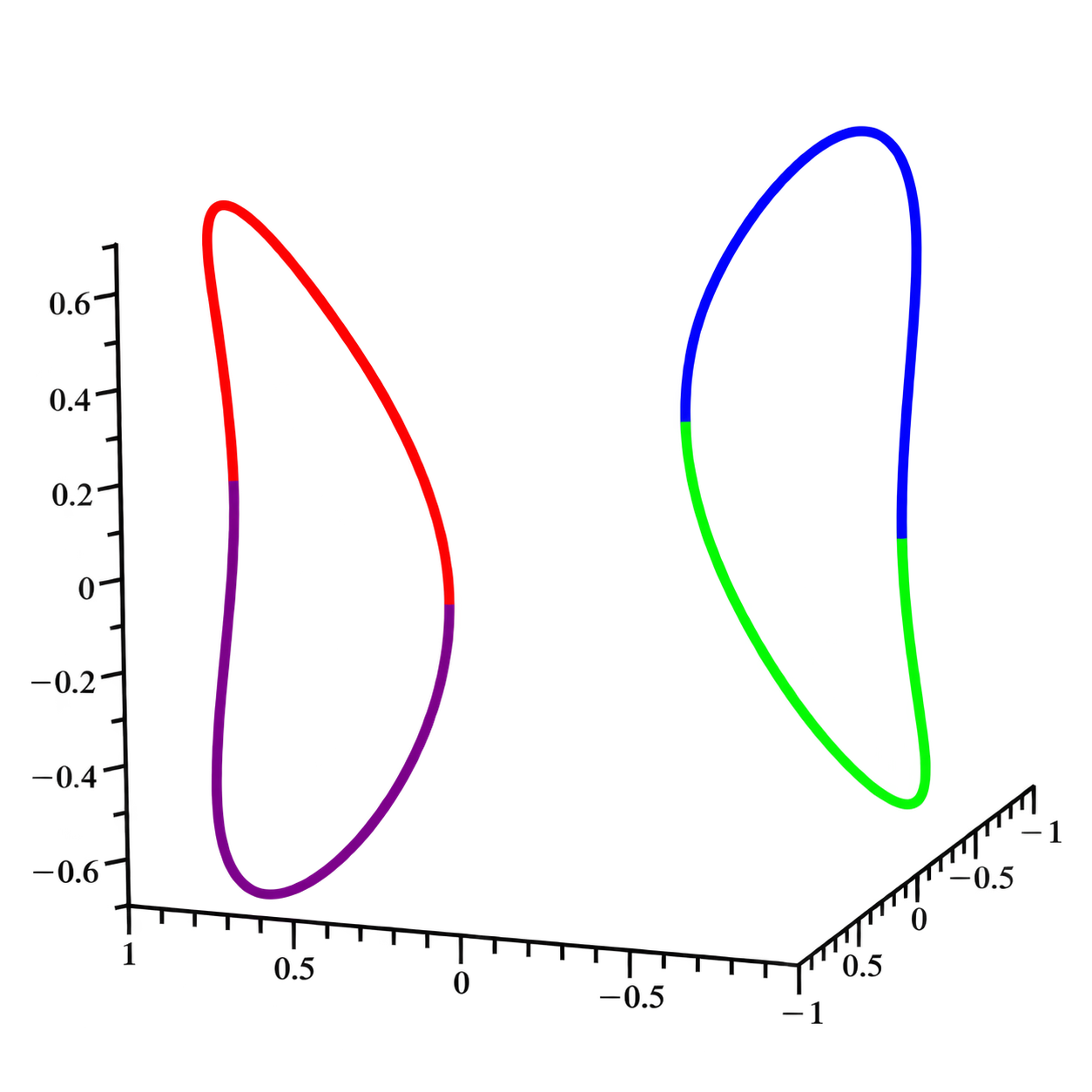}
        \caption{Real points of the tower variety associated to the real radical tower \eqref{eq:tower real example curve}}
        \label{fig:subdiv real rad curve}
    \end{figure}
\end{example}

\begin{example}
  Consider the real tower
\begin{equation}\label{eq:tower real example surface}
\begin{aligned}
\T_\R:=\biggl[
\F_{0,\R}=\R(T_1,T_2)
&\subsetneq
\F_{1,\R}
=\F_{0,\R}[\Delta_1]/
\left\langle
\Delta_1^4+\frac14-T_1^2-T_2^2
\right\rangle
\\
&\subsetneq
\F_{2,\R}
=\F_{1,\R}[\Delta_2]/
\left\langle
\Delta_2^2-1+\Delta_1^2+T_1^2+T_2^2
\right\rangle
\biggr].
\end{aligned}
\end{equation}
        Over $\R$, the tower variety  $\cV_{\T,\R}$  is a surface in $\R^4$ which is contained in the real $3$-dimensional sphere $S^3\subset \R^4$. Since the point $e_3:=(0, 0, 1, 0)\in S^3$ is not in $\cV_{\T,\R}$, we can use the stereographic projection $S^3\setminus\{e_3\}\rightarrow \R^3$ to find a space model of the tower variety. Using the coordinates $x,y,z$ of $\R^3$, the tower variety is mapped to the surface in $\R^3$ defined by the equation
    \[
    4\left(x^{2}+y^{2}+z^{2}-1\right)^{4}
+\left(x^{2}+y^{2}+z^{2}+1\right)^{2}
\left[\left(x^{2}+y^{2}+z^{2}+1\right)^{2}-16\left(x^{2}+y^{2}\right)\right]=0.
    \]
    Figure \ref{fig:subdiv real rad surface} depicts this space model of the real points of the tower variety. In this example, $M=\{\pm1\}^2$ and there are four $\R$-associated universal radical parametrizations given by possible choices of positive or negative square and $4$-th  real roots. The image of these four radical parametrization are four semialgebraic sets that cover  $\cV_{\T,\R}(\R)$. Each of these patches is illustrated in different colors in Figure \ref{fig:subdiv real rad surface}. Now, any minimal parametrization $\cP_\R\in\F_{m,\R}^r$ has four $\R$-associated radical parametrizations. Their domains are the projections of one of these patches 
    of $\cV_{\T,\R}(\R)$ via $\widehat{\pi}$, up to possibly a real algebraic curve given by the denominators of $\cP_\R$.
   \begin{figure}
    \centering
   \includegraphics[width=7cm]{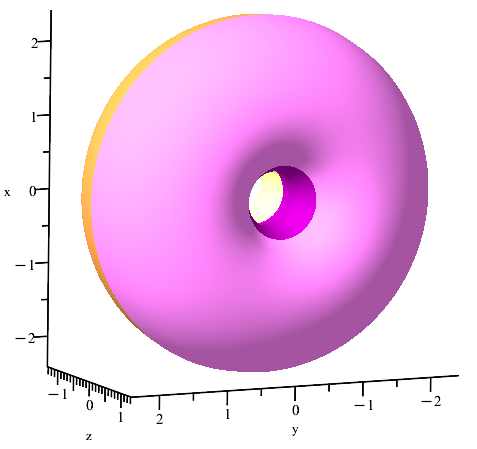} 
    \includegraphics[width=7cm]{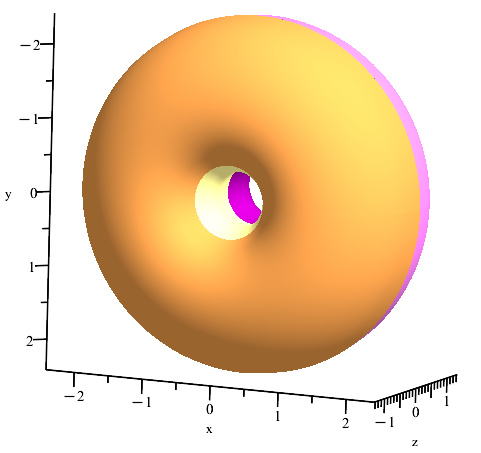} 
    \caption{Real points of the tower variety associated to the tower \eqref{eq:tower real example surface}.}
    \label{fig:subdiv real rad surface}
\end{figure}
\end{example}

\begin{corollary}\label{cor: real points para}
Assume that $\mathscr{V}_{\T,\R}(\R)\setminus\{\alpha_{i,\mathrm{D}}(t)=0:1\leq i\leq m\}$ is non empty.
 Then, for every $\R$--defined radical parametrization $\cP_\R$ in $\T_\R$ with 
 \[
 \mathscr{V}_{\T,\R}(\R)\setminus\V(A)(\R)\neq \emptyset,
 \]
  there exists an $\R$-associated radical parametrization $\cP_\R'\sim\cP_\R$ with nonempty domain. Moreover, if $\mathscr{V}_{\T,\R}(\R)$ is Zariski dense in $\mathscr{V}_\T$, then, for every $\R$--defined radical parametrization $\cP_\R$ in $\T_\R$, there exists an $\R$-associated radical parametrization $\cP_\R'\sim\cP_\R$ such that the domain of $\cP_\R'$ contains a Euclidean open subset and $\mathscr{V}_{\cP_\R}(\R)$ is Zariski dense in  $\mathscr{V}_{\cP_\R}$. In particular, $\mathscr{V}_{\cP_\R}(\R)$ contains a Euclidean open subset of dimension $n$.
\end{corollary}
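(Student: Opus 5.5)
The plan is to deduce both parts from Theorem \ref{thm:real domain} and its proof, together with the density results quoted before Proposition \ref{prop:tower subfield-real}.

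\emph{First part.} By hypothesis there is a real point $(\ot_0,\od_0)\in\mathscr{V}_{\T,\R}(\R)$ with $A(\ot_0)\neq 0$. Then $\ot_0$ lies in the left-hand side of \eqref{eq:real domains}, and the inclusion in Theorem \ref{thm:real domain} gives some $\R$-associated parametrization $\cP_\R'\sim\cP_\R$ with $\ot_0\in\mathrm{Dom}_\R(\cP_\R')$. Concretely, following the proof of that theorem, choose $v\in M$ with $\Lambda_v\circ\cPR^{(v)}(\ot_0)=(\ot_0,\od_0)$. Since $A(\ot_0)\ne0$, the map $\cR$ is defined at this point. Hence $\cP_\R'=\cR\circ\Lambda_v\circ\cPR^{(v)}$ has nonempty domain.

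\emph{Second part.} Assume $\mathscr{V}_{\T,\R}(\R)$ is Zariski dense in $\mathscr{V}_\T$.
\begin{enumerate}
\item Since $\V(A)$ does not contain $\mathscr{V}_\T$ (the denominators are nonzero in $\F_m$), the set $W:=\mathscr{V}_{\T,\R}(\R)\setminus\V(A)$ is nonempty and still Zariski dense in $\mathscr{V}_\T$.
\item Every real point of $W$ lies in some $\mathrm{Im}_\R(\Lambda_v\circ\cPR^{(v)})$, so $W=\bigcup_{v\in M}W_v$ with $W_v$ the part covered by the sheet indexed by $v$. Each $W_v$ is semialgebraic, since it is cut out by the sign conditions on the $\Delta_i$ with $e_i$ even.
\item Because $M$ is finite and $W$ is Zariski dense in an irreducible variety of dimension $n$, some $W_v$ has semialgebraic dimension $n$; otherwise the union would lie in a proper Zariski closed set.
\item Since $\widehat{\pi}$ is finite on $\mathscr{V}_\T\cap U_\T$, the projection $\widehat{\pi}(W_v)\subseteq\mathrm{Dom}_\R(\cP_\R')$ also has dimension $n$, where $\cP_\R'=\cR\circ\Lambda_v\circ\cPR^{(v)}$.
\item A semialgebraic subset of $\R^n$ of dimension $n$ contains a Euclidean open set $\Omega$, which gives the first claim.
\end{enumerate}

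For density of the real points of $\mathscr{V}_{\cP_\R}$: the set $\cR(W_v)$ consists of real points of $\mathscr{V}_{\cP_\R}$. Its Zariski closure contains $\overline{\cR(W_v)}$, and this equals $\overline{\cR(\mathscr{V}_\T)}=\mathscr{V}_{\cP_\R}$ because $\overline{W_v}=\mathscr{V}_\T$ by irreducibility and dimension count. So $\mathscr{V}_{\cP_\R}(\R)$ is Zariski dense.

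The last assertion should follow because $\cR$ is generically finite and $\mathscr{V}_{\cP_\R}$ has dimension $n$, so $\cR(W_v)$ is semialgebraic of dimension $n$. A semialgebraic set of full local dimension at a smooth point contains a Euclidean open subset of the $n$-dimensional real locus. Alternatively, one can apply the Jacobian rank condition on $\cP$ at a generic point of $\Omega$ and use the constant rank theorem.

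The main obstacle is the step from Zariski density of real points to a sheet $W_v$ of full dimension $n$ lying inside $U_\T\setminus\V(A)$. Here I will rely on the standard fact that a Zariski dense semialgebraic subset of an irreducible $n$-dimensional variety has dimension $n$ (see \cite{BCR98}), together with finiteness of $M$.
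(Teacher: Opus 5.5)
Your proposal is correct and follows the route the paper has in mind. The paper gives no separate proof and presents the corollary as a consequence of the inclusion \eqref{eq:real domains} in Theorem \ref{thm:real domain} and of the finite semialgebraic cover \eqref{eq:subdiv of real tower}, in which the images of the $\R$-associated universal parametrizations cover the real points of the tower variety; your pigeonhole over the finite set $M$, the equality between the dimension of a semialgebraic set and that of its Zariski closure, and the finiteness (indeed injectivity, since $\Lambda_v\circ\cPR^{(v)}$ is a section over $W_v$) of $\widehat{\pi}$ on each sheet $W_v$ supply the dimension-theoretic details the paper leaves implicit.
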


\begin{remark}
     A common situation where the assumptions of Corollary \ref{cor: real points para} are satisfied is when $\mathscr{V}_{\T,\R}$ has a real smooth point; see \cite[Prop. 3.3.16]{BCR98} or \cite[Theorem 5.1]{sottile2018real}.
\end{remark}

\begin{corollary}\label{cor: no real points para}
If $\mathscr{V}_{\T,\R}(\R)\setminus\{\alpha_{i,\mathrm{D}}(t)=0:1\leq i\leq m\}$ is empty, then  every minimal $\R$--defined radical parametrizations $(\T_\R,\cP_\R)$ 
and its
$\R$-associated radical parametrizations have empty domain.
\end{corollary}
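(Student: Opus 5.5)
The plan is to derive the statement directly from the equality case of Theorem \ref{thm:real domain}, applied to a minimal parametrization. Let $(\T_\R,\cP_\R)$ be minimal and let $\cP_\R'\sim\cP_\R$. By minimality, equality holds in \eqref{eq:real domains}, so
\[
\mathrm{Dom}_\R(\cP_\R')\subseteq \widehat{\pi}(\cV_{\T,\R}(\R))\setminus\V(A)(\R).
\]
It therefore suffices to show that the right-hand side is empty under our hypothesis.

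Let $\ot_0$ lie in the right-hand side. Then there is $\overline{\delta}_0$ with $(\ot_0,\overline{\delta}_0)\in\cV_{\T,\R}(\R)$ and $A(\ot_0)\neq 0$. Since $A$ is the lcm of the $\alpha_{i,\mathrm{D}}$ and the $p_{j,\mathrm{D}}$, each $\alpha_{i,\mathrm{D}}$ divides $A$, so $\alpha_{i,\mathrm{D}}(\ot_0)\neq 0$ for every $i$. Hence $(\ot_0,\overline{\delta}_0)$ lies in $\mathscr{V}_{\T,\R}(\R)\setminus\{\alpha_{i,\mathrm{D}}(t)=0:1\leq i\leq m\}$, which is empty by hypothesis. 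This contradiction shows the right-hand side is empty, and so $\mathrm{Dom}_\R(\cP_\R')=\emptyset$ for every $\R$-associated $\cP_\R'$.

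It remains to cover $\cP_\R$ itself. It is $\R$-associated to itself via $v=(1,\ldots,1)$: in that case $\Lambda_v$ is the identity, $\T_\R^{(v)}=\T_\R$, and $\cR\circ\cPR=\cP_\R$ as real radical maps. So the same conclusion gives $\mathrm{Dom}_\R(\cP_\R)=\emptyset$.

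The main point to check is that the minimality hypothesis refers to $\cP_\R$ and not to the $\cP_\R'$. This is harmless, because Theorem \ref{thm:real domain} already bounds the union of the domains of all $\cP_\R'\sim\cP_\R$ using only the minimality of $\cP_\R$. The remaining steps are routine divisibility and set-inclusion checks.
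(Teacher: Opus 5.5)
Your argument is correct and is the intended one: the paper gives no separate proof of this corollary and presents it as an immediate consequence of the equality case of Theorem \ref{thm:real domain} for minimal parametrizations, combined (as you do) with the fact that each $\alpha_{i,\mathrm{D}}$ divides $A$. Your remark that $\cP_\R$ is $\R$-associated to itself via $v=(1,\ldots,1)$ is correct and covers the parametrization itself, matching how the paper's proof of Theorem \ref{thm:real domain} identifies the domain of $\cR\circ\cPR$ with that of $\cP_\R$.
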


\begin{remark}
  By Corollary \ref{cor: no real points para}, if $\mathscr{V}_{\T,\R}(\R)=\emptyset$, every minimal $\R$-defined radical parametrization in $\T_\R$ has empty domain. However, this does not imply that $\cV_{\cP_\R}(\R)$ is empty as shown in Example \ref{ex-pat1}.
\end{remark}

\section{Computational improvements and experiments\protect\footnote{The experiments reported in this section were designed by the authors. Their implementation was carried out with the AI system Claude (Anthropic) under the supervision of the authors. The authors reviewed the methodology, the code and the results.
}}\label{sec:computation}
In this section, we present two experiments where we analyze the running time for the computation of $\cV_\T$ and $\cV_\cP$ using the results in \cite{RSV2017} in comparison with the results in this paper.   
All the methods were implemented in \textsc{Singular}~\cite{singular} with the same elimination routine, so that the differences in time come from the methods and not from the implementation.
Each computation had a limit of $600$\,s.

The \textsc{Singular} scripts, the input data, the certification scripts, the  timings  and the ideals computed are available on Zenodo~\cite{zenodo}. They allow the experiment to be reproduced and extended to other examples.

\subsection{Design of the experiments}

We consider two experiments. The first one deals with the computation of the tower variety $\cV_\T$ and the second with the computation of the radical tower variety $\cV_\cP$.  For the experiments, we analyze the following procedure
\begin{itemize}
\item[(a)] Method (a) is the \emph{classical approach}. It is based directly on the results in \cite{RSV2017}.  Thus, it needs to check the irreducibility of the incidence variety $\cA$ to choose the component $\mathcal{B}$; even though we know, by our Theorem \ref{thm: incidence var}, that $\cA$ is irreducible and hence $\cA=\mathcal{B}$. If necessary, in the elimination processes from $\cD$ to $\cA$ and to either $\cV_\T$ or $\cV_\cP$ saturation of the ideals are computed. 
\item[(b)] Method (b) is the direct adaptation of Method (a) to the current paper. Thus, it follows the scheme of (a) but the irreducibility is not checked. 
\item[(c)] Method (c) is for the first experiment. First it writes the tower $\T$ in canonical form  and apply Algorithm \ref{alg: tower} to get the isomorphic tower $\T'$. The output is the ideal $\langle E_1',\dots,E_m'\rangle$ of $\mathscr{V}_{\mathbb{T}'}$, which is birational to $\mathscr{V}_{\mathbb{T}}$  (see Theorem \ref{theo-change of var}).
\item[(d)] Method (d) is for the second experiment. First it writes the tower $\T$ and  the radical parametrization $\cT$ in canonical form. Then, it compute the minimal radical subtower $\mathbb{T}_S$ of $\mathscr{P}$ with Algorithms~\ref{alg:1} and \ref{alg:3}. Next, it determines $\mathscr{V}_{\mathscr{P}}=\mathscr{V}_{\mathscr{P}_{\mathbb{T}_S}}$ (see Theorem~\ref{thm:subtower var}  ) from the variety $\mathscr{D}_{\mathscr{P}}$ of $(\mathbb{T}_S,\mathscr{P})$, eliminating $W$, $\overline{T}$ and $\Delta_i$, $i\in S$.
\end{itemize}

\subsection{Test data}
For our experiments, we consider 2 towers of 3 floors, 3 towers of 5 floors, 2 towers of 7 floors  and 5 towers of 10 floors.  The data is available on Zenodo~\cite{zenodo}, but for convenience of the reader we have included them in Appendix \ref{app:towers}.  In all the examples $n=1$ and $r=2$, so $\mathscr{V}_{\mathscr{P}}$ is always a plane curve. 
The floors of the towers are defined as 
\[
\Delta_i^{e_i}=\alpha_i=\frac{(T-a_i)\,u_i}{d_i},\qquad e_i\in\{2,3\},\quad a_i\in\mathbb{Z},
\]
where $u_i$ and $d_i$ have low degree and may depend on $\oD$.   The parametrization $\cP$ is built from the radicals of a proper radical subtower and so that  the minimal radical subtower of $\mathscr{P}$  cannot be read off from the given expression of $\mathscr{P}$.

\subsection{Timings and results}
The computations have been executed in \textsc{Singular}~4.3.2~\cite{singular}.  To compare the methods on equal terms, all the eliminations and saturations  were computed using  the same routine, \texttt{elim(\dots,"std")} from \texttt{elim.lib}. The computations ran on an Intel Xeon processor at 2.80\,GHz with two cores and 7\,GB of memory, running at most two computations at a time, with a limit of 600\,s and 3\,GB per computation. We report wall-clock times: the median of five runs for computations that finished in less than 60\,s, and a single run otherwise. The scripts and the timings of all the computations are available on Zenodo~\cite{zenodo}.

Tables~\ref{tab:times}, ~\ref{tab:summary} show the computing times of the two experiments.
The first  one gives the time of every computation, and the second  summarizes, for each number of floors, in how many towers each method finished before the time limit.

\begin{table}[H]
\centering
\small
\setlength{\tabcolsep}{4pt}
\begin{tabular}{cc r c r | r r r | r r r r}
\hline
& & & & & \multicolumn{3}{c|}{Experiment 1: $\mathscr{V}_{\mathbb{T}}$} & \multicolumn{4}{c}{Experiment 2: $\mathscr{V}_{\mathscr{P}}$}\\
$m$ & Tower & $\deg(\mathbb{T})$ & $S$ & $\deg(\mathbb{T}_S)$ & (a) & (b) & (c) & (a) & (b) & (d)  \\
\hline
 {3} & A & 12 & $\{1,3\}$ & 6 & 0.268 & 0.043 & \textbf{0.002} & 0.842 & \textbf{0.624} & 1.87   \\
 & B & 12 & $\{2,3\}$ & 4 & 0.197 & 0.146 & \textbf{0.002} & 0.193 & \textbf{0.133} & 0.630   \\
\hline
 {5} & A & 48 & $\{1,2,4\}$ & 12 & $>600$ & $>600$ & \textbf{0.003} & $>600$ & $>600$ & $>600$  \\
 & B & 48 & $\{1,2,4,5\}$ & 24 & $>600$ & $>600$ & \textbf{0.004} & $>600$ & $>600$ & $>600$   \\
 & C & 48 & $\{3,4\}$ & 4 & $>600$ & 7.43 & \textbf{0.004} & $>600$ & 7.44 & \textbf{0.153}  \\
\hline
 {7} & A & 192 & $\{1,2,3\}$ & 8 & $>600$ & $>600$ & \textbf{2.35} & $>600$ & $>600$ & $>600$   \\
 & B & 192 & $\{1,2,3\}$ & 12 & $>600$ & $>600$ & \textbf{0.027} & $>600$ & $>600$ & $>600$   \\
\hline
 {10} & A & 5184 & $\{1,5\}$ & 4 & $>600$ & $>600$ & \textbf{0.010} & $>600$ & $>600$ & \textbf{147.6}   \\
 & B & 1536 & $\{2,6\}$ & 4 & $>600$ & $>600$ & \textbf{0.025} & $>600$ & $>600$ & \textbf{37.29}   \\
 & C & 2304 & $\{1,2\}$ & 6 & $>600$ & $>600$ & \textbf{0.013} & $>600$ & $>600$ & \textbf{195.1}  \\
 & D & 2304 & $\{1,2\}$ & 4 & $>600$ & $>600$ & \textbf{0.244} & $>600$ & $>600$ & \textbf{36.50}  \\
 & E & 1536 & $\{1,5,10\}$ & 8 & $>600$ & $>600$ & \textbf{0.010} & $>600$ & $>600$ & $>600$  \\
\hline
\end{tabular}
\caption{Computing times in seconds. $\deg(\mathbb{T})=[\mathbb{F}_m:\mathbb{F}_0]$; $S$ is the subset of the minimal radical subtower of $\mathscr{P}$ and $\deg(\mathbb{T}_S)$ its degree over $\mathbb{F}_0$. For each experiment, the best time in each row is in bold. $>600$: the time limit was reached.}
\label{tab:times}
\end{table}

\begin{table}[H]
\centering
\small
\begin{tabular}{@{\hspace{12pt}}c c | c c c | c c c@{\hspace{12pt}}}
\hline
& &
\multicolumn{3}{c|}{Experiment 1: $\mathscr{V}_{\mathbb{T}}$} &
\multicolumn{3}{c@{\hspace{12pt}}}{Experiment 2: $\mathscr{V}_{\mathscr{P}}$}\\
$m$ & Towers & (a) & (b) & (c) & (a) & (b) & (d)\\
\hline
3 & 2
& \textbf{2/2} & \textbf{2/2} & \textbf{2/2}
& \textbf{2/2} & \textbf{2/2} & \textbf{2/2}\\
5 & 3
& 0/3 & 1/3 & \textbf{3/3}
& 0/3 & \textbf{1/3} & \textbf{1/3}\\
7 & 2
& 0/2 & 0/2 & \textbf{2/2}
& 0/2 & 0/2 & 0/2\\
10 & 5
& 0/5 & 0/5 & \textbf{5/5}
& 0/5 & 0/5 & \textbf{4/5}\\
\hline
Total & 12
& 2/12 & 3/12 & \textbf{12/12}
& 2/12 & 3/12 & \textbf{7/12}\\
\hline
\end{tabular}
\caption{Number of towers for which each method finished within 600\,s.
For each experiment, the highest nonzero count in each row is shown
in bold, including ties.}
\label{tab:summary}
\end{table}

\subsection{Analysis of the results}

The experiments support the results of Section \ref{sec:refinamient} and \ref{sec:subtower}. Algorithm~\ref{alg: tower} gives the tower variety of the isomorphic tower $\mathbb{T}'$ for towers  up to ten floors and degree $5184$, where the previous approach, from \cite{RSV2017}, does not finish.   For the radical variety, working on the minimal radical subtower provided by Algorithms~\ref{alg:1} and \ref{alg:3} is what makes the computation feasible for towers with many floors: it succeeded for four of the five towers with ten floors, whereas neither the classical approach nor the same method applied to the whole tower finished. 

For some cases, no method finish the computation.   In all these cases the bottleneck is a single Gr\"obner basis computation, whose cost grows very quickly with the number of variables and the degrees involved.

\newpage
\appendix
\section{Radical towers and parametrizations in Section \ref{sec:computation}}\label{app:towers}

This appendix lists the twelve radical towers $\mathbb{T}$ and parametrizations $\mathscr{P}=(p_1,p_2)$ used in the experiments.

\paragraph{Towers with 3 floors}
\paragraph{Tower 3A} 
\[
\Delta_{1}^{2} = \dfrac{3 - T}{T + 1},\quad \Delta_{2}^{2} = \dfrac{\left(\Delta_{1} + T\right) \left(T + 2\right)}{\Delta_{1} - 1},\quad \Delta_{3}^{3} = \dfrac{\left(\Delta_{1} + T^{2}\right) \left(T - 4\right)}{T + 1},
\]
\[
p_{1} = \dfrac{\Delta_{3} + T}{\Delta_{1} + 1},\quad p_{2} = \dfrac{\Delta_{1} \Delta_{3}^{2} \left(T + 2\right) - \Delta_{2}^{2} \left(\Delta_{1} - 1\right) + T \left(T + 2\right)}{T + 2}
\]
 \paragraph{Tower 3B}  
\[
\Delta_{1}^{3} = \dfrac{T - 1}{T + 2},\quad \Delta_{2}^{2} = \dfrac{\left(T + 3\right) \left(\Delta_{1}^{3} \left(T + 2\right) + 2\right)}{T - 2},\quad \Delta_{3}^{2} = \dfrac{\left(\Delta_{2} + T\right) \left(T - 5\right)}{\Delta_{2} + 1},
\]
\[
p_{1} = \Delta_{1}^{3} \left(T + 2\right) + \Delta_{3},\quad p_{2} = \dfrac{\Delta_{2} \Delta_{3} + T}{\Delta_{2} - T}
\]
\paragraph{Towers with 5 floors}
\paragraph{Tower 5A}  
\begin{gather*}
\Delta_{1}^{2} = \dfrac{2 - T}{T + 3},\quad \Delta_{2}^{2} = \dfrac{\left(\Delta_{1} + 2\right) \left(T + 1\right)}{\Delta_{1} + T},\quad \Delta_{3}^{2} = \dfrac{\left(T - 3\right) \left(T^{2} + 1\right)}{T + 4},\\
\Delta_{4}^{3} = \dfrac{\left(\Delta_{2} + T\right) \left(T + 6\right)}{T - 1},\quad \Delta_{5}^{2} = \dfrac{\left(\Delta_{1} + \Delta_{3}\right) \left(T - 4\right)}{\Delta_{3} + 1},
\end{gather*}
\[
p_{1} = \dfrac{\Delta_{4} + T}{\Delta_{2} - 1},\quad p_{2} = \Delta_{1} \Delta_{4} + \Delta_{3}^{2} \left(T + 4\right)
\]
 \paragraph{Tower 5B}  
\begin{gather*}
\Delta_{1}^{3} = \dfrac{T - 1}{T + 2},\quad \Delta_{2}^{2} = \dfrac{\left(\Delta_{1} + 1\right) \left(T + 3\right)}{T - 2},\quad \Delta_{3}^{2} = \dfrac{\left(\Delta_{1}^{2} + T\right) \left(T - 5\right)}{\Delta_{2} + 1},\\
\Delta_{4}^{2} = \dfrac{\left(\Delta_{1} + T\right) \left(T + 4\right)}{T + 1},\quad \Delta_{5}^{2} = \dfrac{\left(\Delta_{2} + \Delta_{4}\right) \left(T - 6\right)}{T + 5},
\end{gather*}
\[
p_{1} = \Delta_{3}^{2} \left(\Delta_{2} + 1\right) + \Delta_{5},\quad p_{2} = \dfrac{\Delta_{2} \Delta_{5} + T}{\Delta_{4} + 1}
\]

\paragraph{Tower 5C}  
\begin{gather*}
\Delta_{1}^{2} = \dfrac{T + 1}{T - 3},\quad \Delta_{2}^{3} = \dfrac{\left(\Delta_{1} + T\right) \left(T - 2\right)}{\Delta_{1} - 2},\quad \Delta_{3}^{2} = \dfrac{\left(T + 5\right) \left(T^{2} + T + 1\right)}{T - 4},\\
\Delta_{4}^{2} = \dfrac{\left(\Delta_{3} + T\right) \left(T - 1\right)}{T + 2},\quad \Delta_{5}^{2} = \dfrac{\left(\Delta_{2} + \Delta_{4}\right) \left(T + 3\right)}{\Delta_{4} - 1},
\end{gather*}
\[
p_{1} = \dfrac{\Delta_{3} + \Delta_{4}}{\Delta_{3} - T},\quad p_{2} = \dfrac{\Delta_{1}^{2} \left(T - 3\right)^{2} + \Delta_{4} \left(T - 3\right)}{T - 3}
\]
\paragraph{Towers with 7 floors}
\begin{gather*}
\Delta_{1}^{2} = \dfrac{\left(T + 7\right) \left(T^{2} - T + 2\right)}{T + 1},\quad \Delta_{2}^{2} = \dfrac{\left(\Delta_{1} - 3\right) \left(T + 11\right)}{T + 2},\quad \Delta_{3}^{2} = \dfrac{\left(T + 5\right) \left(\Delta_{2} - 2 T - 2\right)}{T - 8},\\
\Delta_{4}^{3} = \dfrac{\left(T - 9\right) \left(\Delta_{3} + 3 T - 2\right)}{T + 3},\quad \Delta_{5}^{2} = \dfrac{\left(T + 9\right) \left(\Delta_{2} + T - 5\right)}{T - 2},\\
\Delta_{6}^{2} = \dfrac{\left(T + 8\right) \left(\Delta_{4} + 3 T - 4\right)}{\Delta_{4} - 3},\quad \Delta_{7}^{2} = \dfrac{\left(T + 4\right) \left(\Delta_{6} + T + 2\right)}{\Delta_{6} - 3},
\end{gather*}
\[
p_{1} = \dfrac{\Delta_{3} + T}{\Delta_{2} + 2},\quad p_{2} = \Delta_{1} \Delta_{3} + \Delta_{4}^{3} \left(T + 3\right) + T - \left(T - 9\right) \left(\Delta_{3} + 3 T - 2\right)
\]

\paragraph{Tower 7B} 
\begin{gather*}
\Delta_{1}^{3} = \dfrac{\left(T - 8\right) \left(T^{2} + T - 2\right)}{T + 1},\quad \Delta_{2}^{2} = \dfrac{\left(T - 9\right) \left(\Delta_{1} + T - 5\right)}{T + 6},\\
\Delta_{3}^{2} = \dfrac{\left(T + 8\right) \left(\Delta_{2} - T + 5\right)}{\Delta_{2} + 4},\quad \Delta_{4}^{2} = \dfrac{\left(T - 10\right) \left(\Delta_{3} + 2 T + 3\right)}{\Delta_{3} - 4},\\
\Delta_{5}^{2} = \dfrac{\left(T - 2\right) \left(T^{2} - 2 T + 4\right)}{T + 7},\quad \Delta_{6}^{2} = \dfrac{\left(T + 4\right) \left(T^{2} - 1\right)}{T + 3},\quad \Delta_{7}^{2} = \dfrac{\left(T - 4\right) \left(\Delta_{1} + 2 T + 5\right)}{T + 7},
\end{gather*}
\[
p_{1} = \dfrac{\Delta_{3} + T}{\Delta_{2} + 2},\quad p_{2} = \Delta_{2} \Delta_{3} + \Delta_{4}^{2} \left(\Delta_{3} - 4\right) + T - \left(T - 10\right) \left(\Delta_{3} + 2 T + 3\right)
\]

\paragraph{Towers with 10 floors}
\paragraph{Tower 10A}  
\begin{gather*}
\Delta_{1}^{2} = \dfrac{\left(T + 9\right) \left(T^{2} - 3 T - 4\right)}{T - 6},\quad \Delta_{2}^{3} = \dfrac{\left(\Delta_{1} + 2\right) \left(T - 8\right)}{\Delta_{1} - 2},\\
\Delta_{3}^{2} = \dfrac{\left(T - 11\right) \left(\Delta_{2} + 3 T + 2\right)}{T + 8},\quad \Delta_{4}^{3} = \dfrac{\left(T + 6\right) \left(\Delta_{3} + 2 T - 5\right)}{T - 1},\\
\Delta_{5}^{2} = \dfrac{\left(T + 4\right) \left(\Delta_{1} - 3 T + 1\right)}{\Delta_{1} + 4},\quad \Delta_{6}^{2} = \dfrac{\left(\Delta_{2} - 4\right) \left(T - 2\right)}{T - 4},\quad \Delta_{7}^{3} = \dfrac{\left(T + 11\right) \left(\Delta_{1} - 2 T + 1\right)}{T + 8},\\
\Delta_{8}^{3} = \dfrac{\left(T + 2\right) \left(\Delta_{7} + T - 5\right)}{T + 5},\quad \Delta_{9}^{2} = \dfrac{\left(\Delta_{5} + 3\right) \left(T + 3\right)}{\Delta_{5} - 3},\quad \Delta_{10}^{2} = \dfrac{\left(T - 10\right) \left(\Delta_{7} + T + 1\right)}{\Delta_{7} + 2},
\end{gather*}
\[
p_{1} = \dfrac{\Delta_{5} + T}{\Delta_{5} + 2},\quad p_{2} = \Delta_{1} \Delta_{5} + \Delta_{9}^{2} \left(\Delta_{5} - 3\right) + T - \left(\Delta_{5} + 3\right) \left(T + 3\right)
\]

\paragraph{Tower 10B}
\begin{gather*}
\Delta_{1}^{3} = \dfrac{\left(T - 6\right) \left(T^{2} - 2 T - 2\right)}{T + 2},\quad \Delta_{2}^{2} = \dfrac{\left(T - 9\right) \left(T^{2} + 5\right)}{T - 1},\quad \Delta_{3}^{2} = \dfrac{\left(T + 9\right) \left(\Delta_{1} - 3 T - 2\right)}{\Delta_{1} + 4},\\
\Delta_{4}^{2} = \dfrac{\left(T + 5\right) \left(\Delta_{3} + 3 T + 1\right)}{T + 2},\quad \Delta_{5}^{2} = \dfrac{\left(T - 4\right) \left(\Delta_{2} - 3 T - 5\right)}{\Delta_{2} + 2},\\
\Delta_{6}^{2} = \dfrac{\left(T - 2\right) \left(\Delta_{2} + 3 T + 3\right)}{\Delta_{2} - 1},\quad \Delta_{7}^{2} = \dfrac{\left(T + 1\right) \left(\Delta_{6} + 3 T + 2\right)}{\Delta_{6} - 2},\\
\Delta_{8}^{2} = \dfrac{\left(T - 8\right) \left(T^{2} - T + 1\right)}{T + 2},\quad \Delta_{9}^{2} = \dfrac{\left(T - 12\right) \left(\Delta_{6} + 3 T - 2\right)}{T + 3},\quad \Delta_{10}^{2} = \dfrac{\left(\Delta_{7} - 3\right) \left(T - 3\right)}{\Delta_{7} + 3},
\end{gather*}
\[
p_{1} = \dfrac{\Delta_{6} + T}{\Delta_{2} + 2},\quad p_{2} = \Delta_{1}^{3} \left(T + 2\right) + \Delta_{2} \Delta_{6} + T - \left(T - 6\right) \left(T^{2} - 2 T - 2\right)
\]

\paragraph{Tower 10C} 
\begin{gather*}
\Delta_{1}^{2} = \dfrac{\left(T - 5\right) \left(T^{2} + T - 5\right)}{T + 6},\quad \Delta_{2}^{3} = \dfrac{\left(T - 2\right) \left(\Delta_{1} + T + 2\right)}{T + 3},\quad \Delta_{3}^{2} = \dfrac{\left(T - 9\right) \left(T^{2} - 2 T + 1\right)}{T + 3},\\
\Delta_{4}^{2} = \dfrac{\left(T - 11\right) \left(T^{2} + T - 4\right)}{T + 7},\quad \Delta_{5}^{2} = \dfrac{\left(T - 6\right) \left(\Delta_{3} - T + 2\right)}{T + 9},\\
\Delta_{6}^{2} = \dfrac{\left(T + 11\right) \left(T^{2} - 2 T + 1\right)}{T - 1},\quad \Delta_{7}^{2} = \dfrac{\left(T - 8\right) \left(T^{2} + 3 T + 3\right)}{T + 6},\\
\Delta_{8}^{3} = \dfrac{\left(\Delta_{3} + 2\right) \left(T + 1\right)}{\Delta_{3} - 2},\quad \Delta_{9}^{2} = \dfrac{\left(T + 10\right) \left(T^{2} - T + 5\right)}{T + 3},\quad \Delta_{10}^{2} = \dfrac{\left(T + 5\right) \left(\Delta_{8} + 2 T - 1\right)}{\Delta_{8} + 4},
\end{gather*}
\[
p_{1} = \dfrac{\Delta_{2} + T}{\Delta_{2} + 2},\quad p_{2} = \Delta_{2}^{2} + \Delta_{3}^{2} \left(T + 3\right) + T - \left(T - 9\right) \left(T^{2} - 2 T + 1\right)
\]

\paragraph{Tower 10D} 
\begin{gather*}
\Delta_{1}^{2} = \dfrac{\left(T + 1\right) \left(T^{2} + T + 2\right)}{T + 6},\quad \Delta_{2}^{2} = \dfrac{\left(T - 12\right) \left(\Delta_{1} - T - 2\right)}{\Delta_{1} - 4},\quad \Delta_{3}^{2} = \dfrac{\left(T - 8\right) \left(T^{2} + 2 T + 4\right)}{T + 2},\\
\Delta_{4}^{2} = \dfrac{\left(T - 10\right) \left(\Delta_{2} - 3 T + 3\right)}{\Delta_{2} + 4},\quad \Delta_{5}^{2} = \dfrac{\left(T - 9\right) \left(T^{2} - 3\right)}{T - 1},\quad \Delta_{6}^{2} = \dfrac{\left(T + 7\right) \left(\Delta_{5} + 2 T + 1\right)}{T - 6},\\
\Delta_{7}^{3} = \dfrac{\left(T - 5\right) \left(\Delta_{4} - 2 T + 2\right)}{\Delta_{4} - 3},\quad \Delta_{8}^{3} = \dfrac{\left(T + 4\right) \left(\Delta_{7} + 3 T - 1\right)}{\Delta_{7} - 4},\\
\Delta_{9}^{2} = \dfrac{\left(T - 3\right) \left(\Delta_{8} - 2 T - 3\right)}{T + 2},\quad \Delta_{10}^{2} = \dfrac{\left(T - 7\right) \left(\Delta_{3} + T + 3\right)}{T + 5},
\end{gather*}
\[
p_{1} = \dfrac{\Delta_{2} + T}{\Delta_{1} + 2},\quad p_{2} = \Delta_{2}^{2} + \Delta_{7}^{3} \left(\Delta_{4} - 3\right) + T - \left(T - 5\right) \left(\Delta_{4} - 2 T + 2\right)
\]

\paragraph{Tower 10E} 
\begin{gather*}
\Delta_{1}^{2} = \dfrac{\left(T + 12\right) \left(T^{2} - T - 3\right)}{T + 1},\quad \Delta_{2}^{2} = \dfrac{\left(T + 8\right) \left(\Delta_{1} - 2 T + 3\right)}{\Delta_{1} + 4},\quad \Delta_{3}^{2} = \dfrac{\left(T + 5\right) \left(\Delta_{1} - 2 T - 3\right)}{\Delta_{1} - 4},\\
\Delta_{4}^{2} = \dfrac{\left(T - 5\right) \left(\Delta_{1} - 2 T - 2\right)}{T - 4},\quad \Delta_{5}^{2} = \dfrac{\left(T + 4\right) \left(\Delta_{1} + 3 T - 1\right)}{\Delta_{1} + 3},\quad \Delta_{6}^{2} = \dfrac{\left(T + 9\right) \left(\Delta_{5} + 2 T + 5\right)}{T - 3},\\
\Delta_{7}^{2} = \dfrac{\left(T - 10\right) \left(\Delta_{2} - T - 3\right)}{\Delta_{2} + 2},\quad \Delta_{8}^{3} = \dfrac{\left(T - 11\right) \left(\Delta_{4} - 2 T + 1\right)}{\Delta_{4} - 2},\\
\Delta_{9}^{2} = \dfrac{\left(T - 2\right) \left(\Delta_{6} - T - 4\right)}{T + 7},\quad \Delta_{10}^{2} = \dfrac{\left(T + 10\right) \left(\Delta_{5} + 3 T + 4\right)}{\Delta_{5} + 4},
\end{gather*}
\[
p_{1} = \dfrac{\Delta_{10} + T}{\Delta_{5} + 2},\quad p_{2} = \Delta_{10}^{2} + \Delta_{8}^{3} \left(\Delta_{4} - 2\right) + T - \left(T - 11\right) \left(\Delta_{4} - 2 T + 1\right)
\]

\begin{thebibliography}{12}

\bibitem{AgarwalEtAl2022}
P.~K. Agarwal, B.~Aronov, E.~Ezra, M.~J. Katz, M.~Sharir.
\newblock Intersection queries for flat semi-algebraic objects in three
dimensions and related problems.
\newblock In \emph{38th International Symposium on Computational Geometry
(SoCG 2022)}, LIPIcs, 2022.

\bibitem{Bartkhin}
Batkhin, A.B. A real variety with boundary and its global parameterization. Program Comput Soft 43, 75–83 (2017).
\bibitem{BesierFesti2021}
M.~Besier and D.~Festi,
  Rationalizability of square roots,
 \emph{Journal of Symbolic Computation}  {106}, 48--67 (2021).

\bibitem{BCR98}
J.~Bochnak, M.~Coste, M.-F.~Roy,
\emph{Real Algebraic Geometry},
Ergebnisse der Mathematik und ihrer Grenzgebiete (3), vol.~36,
Springer-Verlag, Berlin, 1998.

\bibitem{CSSV2021} Caravantes, J., Sendra, J.R., Sevilla, D. Villarino C. Transforming ODEs and PDEs from Radical Coefficients to Rational Coefficients. \textit{Mediterr. J. Math.} 18, 96 (2021).


\bibitem{CaravantesSendraSevillaVillarino2024}
J.~Caravantes, J.~R.~Sendra, D.~Sevilla, and C.~Villarino,
  Sufficient conditions for the surjectivity of radical curve parametrizations,
  \emph{Journal of Algebra} \textbf{640} (2024), 129--146.

\bibitem{CouveignesKammerer2012}
J.-M.~Couveignes and J.-G.~Kammerer,
 The geometry of flex tangents to a cubic curve and its parameterizations,
  \emph{Journal of Symbolic Computation}  {47}, no.~3, 266--281 (2012).

\bibitem{CLO2015}
D. A. Cox, J. Little, D. O'Shea,  \textit{Ideals, Varieties, and Algorithms: An Introduction to Computational Algebraic Geometry and Commutative Algebra}, 3rd ed., Undergraduate Texts in Mathematics, Springer, 2015.


\bibitem{CG72}
C.~H. Clemens and P.~A. Griffiths,
\emph{The intermediate Jacobian of the cubic threefold},
Ann. of Math. (2) \textbf{95} (1972), no.~2, 281--356.

\bibitem{singular} W.~Decker, G.-M.~Greuel, G.~Pfister, H.~Sch\"onemann. \textsc{Singular} 4-3-2 --- A computer algebra system for polynomial computations. \url{https://www.singular.uni-kl.de}, 2023.


\bibitem{FS2024} Falkensteiner, S., Sendra, J.R. Transforming Radical Differential Equations to Algebraic Differential Equations. \textit{Mediterr. J. Math.} 21, 87 (2024).  

 
\bibitem{Grasegger2014} 
G.~Grasegger, 
\newblock Radical solutions of first order autonomous algebraic ordinary differential equations, 
\newblock in: \emph{Proceedings of the 39th International Symposium on Symbolic and Algebraic Computation (ISSAC 2014)}, ACM, 2014, pp.~217--223.

\bibitem{GraseggerVo2017}
G.~Grasegger and N.~T.~Vo,
  An algebraic-geometric method for computing Zolotarev polynomials,
  in: \emph{Proceedings of the 2017 ACM International Symposium on Symbolic and Algebraic Computation (ISSAC '17)},
ACM  2017, pp.~173--180.


\bibitem{Harrison2013}
M.~Harrison,
  Explicit solution by radicals, gonal maps and plane models of algebraic curves of genus 5 or 6,
  \emph{Journal of Symbolic Computation}  {51}  (2013), 3--21.

\bibitem{Hartshorne}
R.~Hartshorne,
\emph{Algebraic Geometry},
Graduate Texts in Mathematics, vol.~52,
Springer-Verlag, New York, 1977.

\bibitem{KKS2025}
Kubjas, K., Kuznetsova, O.  Sodomaco, L. Algebraic degree of optimization over a variety with an application to P-norm distance degree. Acta Univ. Sapientiae Math. 17, 3 (2025).

\bibitem{Lewis2018}
R.~H.~Lewis,
 Resultants, implicit parameterizations, and intersections of surfaces,
  in: J.~H.~Davenport, M.~Kauers, G.~Labahn, and J.~Urban (eds.),
\emph{Mathematical Software -- ICMS 2018},
Springer International Publishing, Cham, 2018, pp.~310--318.


\bibitem{PirolaSchlesinger2005}
G.~P.~Pirola and E.~Schlesinger,
  A curve algebraically but not rationally uniformized by radicals,
 \emph{J. Algebra} \textbf{289} (2005), 412--420.

\bibitem{Pop96}
F.~Pop,
Embedding problems over large fields,
\emph{Ann. of Math. (2)} \textbf{144} (1996), no.~1, 1--34.

\bibitem{RackVajda2021}
H.-J.~Rack and R.~Vajda,
  An explicit radical parametrization of Zolotarev polynomials of
degree 7 in terms of nested square roots,
  \emph{Advanced Studies: Euro-Tbilisi Mathematical Journal}
 {14} no.~4, 37--60. (2021), 
\bibitem{zenodo} J.~Sendra-Arranz, J.~R.~Sendra, J.~Sendra.
Computational experiments on radical towers and their parametrizations: scripts and data (version 1.0).
Zenodo, 2026. \url{https://doi.org/10.5281/zenodo.23623629}
\bibitem{SendraSevilla2011}
J.~R. Sendra, D.~Sevilla.
 Radical parametrizations of algebraic curves by adjoint curves.
  \emph{Journal of Symbolic Computation} 46, 1030–1038 2011.

\bibitem{SendraSevilla2013}
J.~R. Sendra, D.~Sevilla.
  First steps towards radical parametrization of algebraic surfaces.
  \emph{Computer Aided Geometric Design}  30(4), 374–388, 2013.


\bibitem{RSV2017}
J.~R. Sendra, D. Sevilla, C. Villarino, Algebraic and algorithmic aspects of radical parametrizations,  \textit{Comput. Aided Geom. Des.} 55 (2017) 1--14.

\bibitem{sottile2018real}
F.~Sottile,
\emph{Real algebraic geometry for geometric constraints},
in \emph{Handbook of Geometric Constraint Systems Principles},
Chapman \& Hall/CRC, Boca Raton, 2018, pp.~273--285.

\bibitem{Zariski1926}
O.~Zariski,
\newblock Sull'impossibilità di risolvere parametricamente per radicali
un'equazione algebrica $f(x,y)=0$ di genere $p>6$ a moduli generali,
 \emph{Atti Accad. Naz. Lincei Rend., Cl. Sci. Fis. Mat. Natur. Ser. VI}
\textbf{3} (1926), 660--666.
\end{thebibliography}
\end{document}